\newcommand{\cl}{\mathcal}
\newcommand{\Prt}{\mathsf{P}} 
\newcommand{\wh}{\widehat}
\newcommand{\wt}{\widetilde}
\newcommand{\sbf}{\boldsymbol}
\newcommand{\mbf}{\mathbf}
\newcommand{\bb}{\mathbb}
\newcommand{\mrm}{\mathrm}
\newcommand{\E}{\mathsf{E}}
\newcommand{\EqD}{\overset{d}{=}}
\newcommand{\ConvD}{\overset{d}{\rightarrow}}
\newcommand{\ConvP}{\overset{P}{\rightarrow}}
\newcommand{\pa}{\mrm{pa}}
\newcommand{\An}{\mrm{An}}
\newcommand{\an}{\mrm{an}}
\newcommand{\de}{\mrm{de}}
\newcommand{\nd}{\mrm{nd}}
\newcommand{\De}{\mrm{De}}
\def\pp#1{ \left(#1\right) }
\def\pb#1{ \left[#1\right] }
\def\pc#1{ \left\{#1\right\} }
\DeclareMathOperator*{\argmin}{arg\,min}
\algrenewcommand\algorithmicrequire{\textbf{Input:}}
\algrenewcommand\algorithmicensure{\textbf{Output:}}
\theoremstyle{plain}
\newtheorem{theorem}{Theorem} 
\newtheorem{lemma}[theorem]{Lemma}
\newtheorem{proposition}[theorem]{Proposition}
\newtheorem{corollary}[theorem]{Corollary}
\theoremstyle{definition}
\newtheorem{definition}[theorem]{Definition}
\newtheorem{example}{Example}
\newtheorem{remark}{Remark}%
\newtheorem{assumption}{Assumption}
\begin{document}

\begin{frontmatter}
\title{Structural Causal Models for Extremes:  an  Approach Based on Exponent Measures}
\runtitle{Structural Causal Models for Extremes}

\begin{aug}
\author[A]{\fnms{Shuyang}~\snm{Bai} \ead[label=e1]{bsy9142@uga.edu}},
\author[B]{\fnms{Fei}~\snm{Fang}\ead[label=e2]{fei.fang@yale.edu}}
\and
\author[C]{\fnms{Tiandong}~\snm{Wang}\ead[label=e3]{td\_wang@fudan.edu.cn}}
\address[A]{Department of Statistics,
University of Georgia \printead[presep={ ,\ }]{e1}}

\address[B]{Department of Biostatistics, Yale University, \printead[presep={,\ }]{e2}}

\address[C]{Shanghai Center for Mathematical Sciences,
Fudan University \printead[presep={,\ }]{e3}}
\end{aug}

\begin{abstract}
We introduce a new formulation of structural causal models for extremes, called the extremal structural causal model (eSCM). Unlike conventional structural causal models, where randomness is governed by a probability distribution, eSCMs use an exponent measure, an infinite-mass law that naturally arises in the analysis of multivariate extremes. Central to this framework are activation variables, which abstract the single-big-jump principle, along with additional randomization that enriches the class of eSCM laws. This formulation encompasses all possible laws of directed graphical models under the recently introduced notion of extremal conditional independence.
We also identify an inherent asymmetry in eSCMs under natural assumptions, enabling the identifiability of causal directions, a central challenge in causal inference. Finally, we propose a method that utilizes this causal asymmetry and demonstrate its effectiveness in both simulated and real datasets.
\end{abstract}


\begin{keyword}
\kwd{Extreme Value Theory}
\kwd{Exponent Measure}
\kwd{Causal Asymmetry}
\kwd{Directed Graphical Models}
\kwd{Structural Causal Models}
\end{keyword}

\end{frontmatter}

\section{Introduction}\label{sec:Intro}

Investigating causal relationships is a central goal in many scientific disciplines. The \emph{structural causal model (SCM)}, also known as the \emph{structural equation model}, is a widely used approach for modeling causal interactions among variables. An SCM consists of a set of equations structured according to a \emph{directed acyclic graph (DAG)} $\cl{G}=(V,E)$, where the node set $V$ indexes the variables of interest, and $E$ denotes the set of directed edges such that
\begin{equation}\label{eq:usual SCM}
X_v= f_v(  \mbf{X}_{\mathrm{pa}(v)}, e_v),\quad  v\in V. 
\end{equation} 
Each variable $X_v$ is determined by a structural function $f_v$ of its parent variables, $\pa(v)\subset V$ (nodes with edges pointing to $v$), and an exogenous noise term $e_v$. The $e_v$'s are assumed to be mutually independent. If $\mathrm{pa}(v)=\emptyset$, then $\mbf{X}_{\mathrm{pa}(v)}$ is considered absent. For comprehensive discussions of the central role SCMs play in causal modeling, see \cite{pearl2009causality,peters2017elements}.

Under certain circumstances, causal relationships are only evident at extreme values, or there is specific interest in exploring causality at these extremes. Such considerations arise in fields including finance \citep{chuang2009causality}, Earth and environmental sciences \citep{sun2021causal,mhalla2020causal}, public health \citep{chuang2009causality,chernozhukov2011inference,zhang2012causal}, genetics \citep{duncan2011genome}, and neuroscience \citep{zanin2016causality}, among others.
Recently, there has been growing interest in linking SCMs with extreme value analysis. One line of work focuses on the \emph{max-linear} structural causal model introduced in \cite{gissibl2018max}, with further developments in \cite{kluppelberg2021estimating,gissibl2021identifiability,amendola2021markov,amendola2022conditional,asenova2022max,buck2021recursive,krali2023heavy,tran2024estimating,adams2025inference,kluppelberg2025causal,amendola2025pc}. Another line is based on the heavy-tailed \emph{sum-linear} structural causal model \citep{gnecco2021causal,pasche2023causal,zhou2024efficient,krali2025causal,jiang2025separation}. A recent review \citep{chavez2024causality} summarizes these active developments in causal analysis of extremes.


In this work, we introduce a new formulation of SCMs tailored to extreme values. Specifically, we disentangle extremal causal modeling from standard SCMs by constructing models in an asymptotic regime relevant to multivariate extremes. This separation is motivated by the fact that data informative about extremal behavior typically consists of a small set of outliers, making it difficult to extrapolate causal models fitted to the bulk of the distribution into the tails. A similar perspective was recently adopted in \cite{engelke2025extremes}, and we highlight connections to that work throughout.

Unlike conventional SCMs, where randomness is governed by a joint probability distribution (e.g., the law of $\pp{X_v}_{v\in V}$ in \eqref{eq:usual SCM}), we propose the extremal structural causal model (eSCM), in which randomness is governed by an exponent measure, an infinite-mass law that naturally arises in multivariate extreme value theory (see Definition \ref{Def:MRV} below).{Though infinite in mass, the exponent measure serves as the analogue of a ``joint distribution'' that captures the joint tail dependence among multiple variables, and it is commonly treated as the target population distribution for statistical inference.} 
At the core of this formulation are activation variables, which follow infinite-mass laws and abstract the single-big-jump principle, along with additional randomization that enriches the eSCM structure. Readers may refer to Definition \ref{def:eSCM} for a quick overview.

{The eSCM framework provides a principled and unifying foundation for the two major existing approaches to extremal causal modeling, the max- and sum-linear SCMs, by embedding them into a common asymptotic setting.
Moreover, we identify a natural form of causal asymmetry in eSCMs that enables directionally identifiable causal inference. Leveraging this property, we propose a consistent causal  discovery algorithm based on estimating the support of the bivariate angular measures, efficiently capturing the underlying extremal causal order}.

The rest of the paper is organized as follows.  Section~\ref{Sec:eSCM} develops the general theory of eSCMs, beginning with their formulation, basic properties, and illustrative examples in Sections~\ref{subsec:MEVT}–\ref{sec:example}. Section~\ref{Sec:limit} shows how eSCMs can arise as limits of certain probabilistic SCMs.{Section~\ref{Sec:intervention} discusses interventions in the eSCM framework.} In Section~\ref{Sec:causal markov}, we establish the causal Markov properties of eSCMs with respect to the notion of extremal conditional independence \citep{engelke2020graphical,engelke2025graphical}. Section~\ref{Sec:causal asym} turns to causal direction learning, with Section~\ref{Sec:causal asym discussion} highlighting an inherent asymmetry under natural assumptions that enables identifiability. In Section~\ref{Sec:stats caus dir}, we propose a statistical estimator exploiting this asymmetry, which forms the basis of a consistent causal order learning algorithm detailed in Section~\ref{sec:causal order}. Section~\ref{Sec:simdata} demonstrates the effectiveness of the proposed methodology using both simulated and real data. All proofs are deferred to the supplement \citep{fang2025supplement}.


\section{Extremal  structural causal models}\label{Sec:eSCM}
Throughout the rest of the paper, all vectors are by default column vectors. We use $\|\cdot \|$ to denote a generic norm on $\bb{R}^d$, $d\in \bb{Z}_+$, while $\|\cdot \|_p$ denotes the $p$-norm, $p\in (0,\infty]$.  For nonempty index sets $I\subset J$,  and a vector $\mbf{y}\in \bb{R}^J$, we write $\mbf{y}_I$ for the subvector of $\mbf{y}$ formed by the indices in $I$. An indicator is denoted by   $\mbf{1}_{\pp{\cdot }}$.   
  Suppose $\cl{G}=\pp{V,E}$ denotes a DAG with nonempty node set $V$ and edge set $E$. Let $v\in V$. 
  \begin{table}[h]
\centering
\caption{Graph-theoretic notation associated with a node $v$ in a DAG $(V,E)$.}\label{tab:notation}
\begin{tabular}{ll}
\toprule
\textbf{Notation} & \textbf{Description} \\
\midrule
$\pa(v)$ & Parent set of $v$: $\{\,u\in V : (u\to v)\in E\,\}$ \\[0.3em]
$\an(v)$ & Ancestor set of $v$: $\{\,u\in V : \text{there exists a directed path } u \to \cdots \to v \,\}$ \\[0.3em]
$\An(v)$ & Ancestors including $v$: $\an(v)\cup\{v\}$ \\[0.3em]
$\de(v)$ & Descendant set of $v$: $\{\,u\in V : \text{there exists a directed path } v \to \cdots \to u \,\}$ \\[0.3em]
$\De(v)$ & Descendants including $v$: $\de(v)\cup\{v\}$ \\[0.3em]
$\nd(v)$ & Non-descendants of $v$: $V\setminus \De(v)$ \\[0.3em]
$\cl{A}(v)$ & Ancestral sub-DAG of $v$: node set $\An(v)$, edge set = edges along directed paths from $\An(v)$ to $v$ \\[0.3em]
$\cl{A}_u(v)$ & \emph{Ancestral sub-DAG of $v$ cut at $u\in\an(v)$}: sub-DAG of $\cl{A}(v)$ obtained by \\
& erasing all edges in $\cl{A}(v)$ pointing to $u$, then taking the connected component of $v$ \\[0.3em]
$\mathrm{An}_u(v)$ & Node set of $\cl{A}_u(v)$ \\[0.3em]
$\An_u^{\circ}(v)$ & $\mathrm{An}_u(v)\setminus\{u\}$ \\
\bottomrule
\end{tabular}

\end{table}
{We summarize 
the notation involved in Table~\ref{tab:notation}, and give
an graphical illustration in Figure \ref{fig:DAG illu}.}
 
\begin{figure}
\tikzset{
  VertexStyle/.append style = {minimum size=18pt},
  EdgeStyle/.append style   = {->, >=Latex}
}
 \begin{tikzpicture}[scale=1] 
    \SetGraphUnit{2} 
   \Vertex{2}  
    \EA(2){3}  
    \SOWE(2){1}  
    \SOEA(3){4}  

    \NOWE(2){5}   
    \NOEA(3){6}  

    \Edges(1,2,3,4)  
    \Edges(5,2)        
    \Edges(3,6) 
    \end{tikzpicture}
    \caption{Illustration of DAG notation. $\pa(2)=\{1,5\}$. $\an(3)=\{1,2,5\}$. $\An(3)=\{1,2,3,5\}$. $\de(3)=\{4,6\}$.  $\De(3)=\{3,4,6\}$. $\nd(3)=\{1,2,5\}$. 
    The    sub-DAG $\cl{A}(4)$  consists of the node set $\An(4)=\{1,2,3,4,5\}$ and the   edge set $\{(1\rightarrow 2), (5\rightarrow 2),  (2\rightarrow 3), (3\rightarrow 4)\}$.  The  sub-DAG   $\cl{A}_2(4)$  consists of the node set $\An_2(4)=\{2,3,4\}$ and the edge set $\{(2\rightarrow 3), (3\rightarrow 4)\}$.  $\An_2^{\circ}(4)=\{3,4\}$.
    }\label{fig:DAG illu}
\end{figure}

\subsection{Background on multivariate extremes and exponent measure}
\label{subsec:MEVT}
We start by recalling some important concepts  from the multivariate extreme value theory that will be used throughout the rest of the paper. We refer to \cite{beirlant2006statistics, resnick2007heavy} for more details.

Suppose $\mathbf{X}=(X_v)_{v\in V}\in [0,\infty)^V$ is a $d$-dimensional   random vector indexed by $V:=\{1,\ldots,d\}$ with continuous marginal distributions. 
Each coordinate $X_v$ represents one component of a multivariate sample. We focus on the nonnegative orthant suitable for analyzing one-sided extremes, which is widely encountered in practice, although extensions to two-sided extremes can be naturally achieved. 
 As a common practice in the analysis of multivariate extremes, 
 we assume that the marginal distribution of $\mathbf{X}$ satisfies
 \begin{equation}\label{eq:pareto marginal} 
 \lim_{x\rightarrow\infty} x^{\alpha}\Prt(X_v>x)= s_v,\qquad  v=1,\ldots,d,
\end{equation} 
where $\alpha>0$, and $s_v\in (0,\infty)$ is a constant. 
Also note that for {data not satisfying the marginal assumption \eqref{eq:pareto marginal}  such as  light-tailed data}, we may apply the transformation
\begin{equation}\label{eq:marg trans}
X_v\mapsto \pb{1- F_v(X_v)}^{-1/\alpha},
\end{equation}
where $F_v$ denotes the marginal CDF of $X_v$, $v\in V$, to obtain standard $\alpha$-Pareto marginals. In practice, $F_v$ will be replaced by its empirical counterpart. {Furthermore, in our empirical studies, we set $\alpha = 2$ when applying the transform in \eqref{eq:marg trans}, following recent work (e.g. \cite{jiang2025separation, krali2025causal}) that adopts this choice due to its associated mathematical conveniences.}


 Now we introduce the concept of multivariate regular variation (MRV), which is a key assumption for analysis of joint tail behaviors; {see for instance \cite[Chapter 6]{resnick2007heavy}.}
\begin{definition}\label{Def:MRV}
Let $\mbf{0}_V$ be the origin in $[0,\infty)^V$, and $\overset{\mathbf{v}}{\rightarrow}$ denote the vague convergence (see, e.g., \cite[Appendix B]{kulik2020heavy}) of  measures  on $\bb{E}_V:= [0,\infty)^V\setminus \{\mathbf{0}_V\}$, then $\mathbf{X}$ is said to be multivariate regularly varying (MRV) if 
\begin{equation}\label{eq:Lambda}
t \Prt\left(  t^{-1/\alpha} \mathbf{X}   \in  \cdot  \right) \overset{\mathbf{v}}{\rightarrow}  \Lambda(\cdot), \quad \text{as }t\rightarrow\infty,
\end{equation}
where $\Lambda$ is an infinite measure defined on the Borel $\sigma$-field of $\bb{E}_V$  that is finite on any Borel set separated from $\mbf{0}_V$ (i.e., $\mbf{0}_V$ does not belong to its closure in $[0,\infty)^V$), known as the \emph{exponent measure}.
\end{definition}
 

{In particular, the convergence in \eqref{eq:Lambda}  is characterized by $\lim_{t\rightarrow\infty} t\Prt\pp{t^{-1/\alpha} \mbf{X}\in B}=\Lambda(B)$ for Borel $B$ separated from $\mbf{0}_V$ with $\Lambda\pp{\partial B}=0$, where $\partial B$ denotes the  boundary of $B$.  The limit measure \( \Lambda(B) \) can thus be interpreted (up to scaling) as capturing the asymptotic probability that \( \mathbf{X} \) falls into the extreme region \( t^{1/\alpha} B \) for large $t$.  
The   limit relation  \eqref{eq:Lambda} also implies that the exponent measure $\Lambda$ satisfies the homogeneity property: 
\begin{equation}\label{eq:Lambda homo}
    \Lambda(c \, \cdot )=c^{-\alpha}\Lambda(\cdot),\quad c>0.
\end{equation}
It also follows from \eqref{eq:pareto marginal} that
\begin{equation}\label{eq:Lambda marginal}
\Lambda(\{\mbf{y}\in \bb{E}_V:\  y_v>1\})=s_v\in (0,\infty), \quad v\in V,
\end{equation}
 The fact that $\Lambda\pp{ \bb{E}_V}=\infty$ can be inferred from \eqref{eq:Lambda homo} and \eqref{eq:Lambda marginal}. }
Conversely, any Borel measure $\Lambda$ on $\bb{E}_V$ that satisfies \eqref{eq:Lambda homo} and \eqref{eq:Lambda marginal} 
is an exponent measure which arises from \eqref{eq:Lambda} for some multivariate regularly varying $\mbf{X}$ satisfying \eqref{eq:pareto marginal}.  
As is common in the literature, one may also include slowly varying functions (such as logarithmic terms) in the scaling relations \eqref{eq:pareto marginal} and \eqref{eq:Lambda}.{In this work, however, we exclude such factors for simplicity in order to focus on the core ideas.}

Another key concept for describing extremal dependence structures is
\emph{extremal independence}; {see for example \cite[Section 2.1.2]{kulik2020heavy}}. 
\begin{definition}\label{Def:ext indep}
The exponent measure $\Lambda$ is said to be (component-wise) extremaly independent, if $\Lambda$  concentrates on the coordinate axes $$\bb{A}_V:=\{\mbf{y}\in \bb{E}_V:\ y_v>0 \text{ for exactly one }v=1,\ldots,d \},$$ or equivalently, $\Lambda(y_u>0,y_v>0)=0$ for any distinct $u,v\in V$.    
\end{definition}
 Extremal independence  can also be characterized by the bivariate tail dependence coefficients:  $\lim_{x\rightarrow\infty} \Prt(X_u>x|X_v>x)=0$ for any distinct $u,v\in V$, where $\mbf{X}=(X_v)_{v\in V}$  is related to $\Lambda$ as in \eqref{eq:Lambda}. 
{While pairwise probabilistic independence between \( X_u \) and \( X_v \) implies   extremal independence,  at the level of the exponent measure \( \Lambda \), the notion of extremal independence is fundamentally different in nature from classical probabilistic independence. In particular, extremal independence does not correspond to a product measure factorization of \( \Lambda \).}
The intuition behind extremal independence connects to the well-known ``single big jump principle'' for heavy-tailed distributions: when the vector exhibits an extreme, it is because one component is extreme and others are not, rather than multiple components being large together.

\subsection{The formulation of extremal structural casual model}
As mentioned before, the exponent measure $\Lambda$ in  \eqref{eq:Lambda}, albeit an infinite measure,  may be viewed as the  ``extremal distribution'' of sample $\mbf{X}$. We therefore regard an exponent measure as the joint law governing the extremal causal structural model to be formulated. Motivated by \eqref{eq:usual SCM}, we consider replacing the independent random variables $(e_v)_{v\in V}$ with those exhibiting extremal independence as defined in Definition \ref{Def:ext indep}, which we now explain.


Let $\Lambda^{\perp}$ denote the exponent measure on $\bb{E}_V$  such that 
\begin{equation}\label{eq:Lambda_perp}
\Lambda^{\perp}\pp{\{\mbf{y}\in \bb{E}_V:\ y_v>y \}}=s y^{-\alpha}, \ s>0,\ 
v=1,\ldots,d,\text{  and  } \Lambda^{\perp}\pp{\bb{E}_V\setminus \bb{A}_V }=0.
\end{equation}
The infinite measure $\Lambda^{\perp}$ may be interpreted as the joint law of extremally independent and identically distributed improper random variables with improper Pareto marginals, the latter arising as a direct consequence of the homogeneity condition~\eqref{eq:Lambda homo}.
A simple example satisfying \eqref{eq:Lambda_perp} in terms of the limit relation \eqref{eq:Lambda} is  $\mbf{X}$ consisting of extremally independent components $X_v$ with $P(X_v>x)\sim s x^{-\alpha}$, $x\rightarrow\infty$. 

To formulate extremal structural causal models with rich exponent measure laws, it turns out that we need extra randomness in addition to $\Lambda^{\perp}$ (see the discussion around \eqref{eq:Y_1 -> Y_2} below). Let $\Prt_{\sbf{\theta}}$ denote the joint law on $[0,1]^V$ of a $d$-dimensional random vector with i.i.d.\ Uniform$(0,1)$ components. Note that
the choice of Uniform$(0,1)$ as the randomization distribution is without loss of generality, since any probability distribution can be obtained from a uniform distribution via the inverse transform of the CDF. 

  Now we introduce improper random variables, termed as \emph{activation variables},  denoted by $\boldsymbol{\eta}=(\eta_v)_{v\in V}$, which are jointly distributed according to $\Lambda^{\perp}$. Furthermore, let $\boldsymbol{\theta}=\pp{\theta_v}_{v\in V}$ be a random vector consisting of i.i.d.\  Uniform$(0,1)$ random variables that are ``independent'' of $\boldsymbol{\eta}$.  Formally, this means 
$(\boldsymbol{\eta},\boldsymbol{\theta})$ is measurable map from an underlying (infinite) measure space $(\Omega,\mathcal{F},\mu)$ to $\bb{E}_V\times [0,1]^V$, such that the push-foward measure
$\mu\pp{ \pp{\boldsymbol{\eta},\boldsymbol{\theta} }\in \cdot }=(\Lambda^{\perp}\otimes \Prt_{\sbf{\theta}})(\cdot)$, where $\Lambda^{\perp}\otimes \Prt_{\sbf{\theta}}$ denotes the  product measure on $\bb{E}_V\times [0,1]^V$.   
As a canonical choice, one may take $(\Omega,\mathcal{F},\mu) = (\mathbb{E}_V\times [0,1]^V ,\mathcal{B},\Lambda^{\perp})$, where $\mathcal{B}$ denotes the Borel $\sigma$-field of $\mathbb{E}_V\times [0,1]^V$, and $(\boldsymbol{\eta},\sbf{\theta})$ is taken as the identity map on $\mathbb{E}_V\times [0,1]^V$.

Next, we write $\mathbf{Y} = (Y_v)_{v\in V}$ for the \emph{extremal variables}, which may be viewed as the extremal counterpart of the usual sample variables $\mathbf{X}=(X_v)_{v\in V}$ in \eqref{eq:usual SCM}, which are improper random variables governed by an infinite measure. In what follows, we formulate a causal structural model for $\mathbf{Y}$. Specifically, the definition will describe $\mathbf{Y}$ as a measurable function of $(\boldsymbol{\eta},\boldsymbol{\theta})$ through recursive relations analogous to \eqref{eq:usual SCM}, thereby yielding $\mathbf{Y}$ as a measurable map from $(\Omega,\mathcal{F},\mu)$ to $[0,\infty)^V$ (see also \eqref{eq:Y=F(eta,theta)} below).

\begin{definition}[eSCM]\label{def:eSCM}
Let $\cl{G}=(V=\{1,\ldots,d\}, E)$, $d\in\bb{Z}_+$ be a DAG. Suppose $\sbf{\eta}=\pp{\theta_v}_{v\in V}$ and $\sbf{\theta}=\pp{\theta_v}_{v\in V}$ are respectively activation variables and Uniform$(0,1)$ variables defined on an underlying measure space $(\Omega,\cl{F},\mu)$ as described above. 
An eSCM associated with the DAG $\cl{G}$ is given by
  \begin{equation}\label{eq:general eSCM}
Y_v= f_v(  \mbf{Y}_{\mathrm{pa}(v)},\eta_v,  \theta_v):= a_v  \eta_v +  h_v\pp{ \mbf{Y}_{\pa(v)},\theta_v }  
,\    v\in V=\{1,\ldots,d\}, 
\end{equation}
where the nonrandom coefficient $a_v\in [0,\infty)$, and each  $h_v:[0,\infty)^{\mathrm{pa}(v) }\times [0,1]  \mapsto[0,\infty)$ is a measurable function such that: 
\begin{enumerate}
    \item  $ h_v(c\mbf{y}_{\mrm{pa}(v)}, \theta )=c h_v(\mbf{y}_{\mrm{pa}(v)}, \theta)$ for any  $\theta\in [0,1]$, $\mbf{y}_{\mrm{pa}(v)}\in[0,\infty)^{\mathrm{pa}(v) }$ and  $c\in[0,\infty)$;
    \item  $\mu(Y_v>1)\in (0,\infty)$     for all $v\in V$. 
\end{enumerate}
 In \eqref{eq:general eSCM}, we refer to $a_v$ as the \emph{activation coefficient}, $h_v$  the \emph{proper structural function}, and $f_v$ the \emph{total structural function} associated with node $v$. In addition,  the law $\cl{L}(\mbf{Y})$ refers to the push-forward measure $\mu(\mbf{Y}\in\cdot )$ restricted to $\bb{E}_V$.
\end{definition}
Condition 1 guarantees the homogeneity property of the exponent measure $\Lambda=\cl{L}(\mbf{Y})$ in \eqref{eq:Lambda homo} holds,{as clarified in Proposition \ref{Pro:basic} below}. Note that when $c=0$, we have $h_v(\mbf{0},\theta)=f_v(\mbf{0},0,\theta)=0$ for $\theta\in [0,1]$. Since $\mbf{Y}_{\pa(v)}$ does not depend on $\eta_v$, the two terms $a_v\eta_v$ and $h_v\pp{ \mbf{Y}_{\pa(v)},\theta_v }$ cannot be simultaneously nonzero due to the nature of $\sbf{\eta}$; see   the discussion below \eqref{eq:Y=F(eta,theta)}.

Condition 2 ensures non-trivial marginal laws, and the restriction of $\mu(\mbf{Y}\in\cdot )$ to $\bb{E}_V$ in Definition~\ref{def:eSCM} is imposed to exclude the origin $\mbf{0}_V$, as required by the definition of an exponent measure.  Moreover, it is possible to have  $\mu(\mbf{Y}=\mbf{0}_V)>0$, and detailed discussion is deferred to Section \ref{sec:law eSCM}.


\begin{remark}\label{Rem:red f_v}
 One may assume a more general form of $f_v$ than \eqref{eq:general eSCM}, i.e. $f_v: [0,\infty)^{\mathrm{pa}(v)} \times [0,\infty) \times [0,1]  \mapsto[0,\infty)$ that satisfies $f_v(c \mbf{y},c\eta,\theta)=cf_v( \mbf{y},\eta,\theta)$ for any $\theta\in [0,1]$, $\mbf{y}\in[0,\infty)^{\mathrm{pa}(v) }$ and $c\in [0,\infty)$. However, we argue that it effectively reduces to the form \eqref{eq:general eSCM}. When $\eta_v>0$, $\eta_u=0$ for $u\in \an(v)$, which implies $\mbf{Y}_{\an(v)}=\mbf{0}_{\an(v)}$; see the discussion below \eqref{eq:Y=F(eta,theta)}. Therefore, by the homogeneity property, we have
\[
f_v\pp{ \mbf{Y}_{\mathrm{pa}(v)},\eta_v,  \theta_v}= \eta_v f_v\pp{ \mbf{0}_{\mathrm{pa}(v)},1,  \theta_v} + f_v\pp{ \mbf{Y}_{\mathrm{pa}(v)}, 0,  \theta_v} \mbf{1}_{\pc{\eta_v=0}}.
\]
The second term above can be viewed as the $h_v(\cdot)$ function in \eqref{eq:general eSCM}. 
For the first term, let $A_v=f_v\pp{ \mbf{0}_{\mathrm{pa}(v)},1,  \theta_v}$, we then have by Fubini that $\mu(A_v\eta_v>y)=s y^{-\alpha} \E_{\sbf{\theta}}[ A_v^\alpha ]$, $y>0$, where $\E_{\sbf{\theta}}$  denotes the expectation with respect to $\Prt_{\sbf{\theta}}$. Hence, as long as $\E_{\sbf{\theta}}[ A_v^\alpha ]<\infty$, the law of $\mbf{Y}$ remains unchanged if $A_v$ is replaced by 
$
a_v:=\pp{\E_{\sbf{\theta}}[ A_v^\alpha ]}^{1/\alpha}.
$
\end{remark}

Another instructive way to interpret   eSCMs governed by infinite-mass laws is through a Poisson point process. One may regard  a sample $\mbf{Y}_i$ of an eSCM \eqref{eq:general eSCM} as a point from the Poisson point process $\sum_{i=1}^\infty {\delta_{\mbf{Y}_i}}$ with mean measure $\cl{L}(\mbf{Y})$, which is the weak limit of a rescaled empirical point process $\sum_{i=1}^n \delta_{\mbf{X}_i/n^{1/\alpha}}$ as $n\rightarrow\infty$, and $\{\mbf{X}_i: i\ge 1\}$ are i.i.d.\ samples from $\mbf{X}$ (see for instance \cite[Theorem 6.2]{resnick2007heavy}). Hence, the eSCM \eqref{eq:general eSCM} describes a relation that approximately governs the rescaled sample points $\mbf{Y}_i\approx\mbf{X}_i/n^{1/\alpha}$ for those extremal $\mbf{X}_i$'s whose magnitudes are of order $n^{1/\alpha}$.

Next, we highlight the importance of including the randomizers  $\pp{\theta_v}_{v\in V }$ in eSCMs. 
We say an eSCM in Definition \ref{def:eSCM} is \emph{simple}, if the proper structural functions $h_v$ in \eqref{eq:general eSCM} does not depend on the randomizer $\theta_v$ for all $v\in V$.
 Then consider the following simple eSCM, corresponding to the DAG $V=\{1,2\}$ and $E=\{1\rightarrow 2\}$:
\begin{equation}\label{eq:Y_1 -> Y_2}
    Y_1 = \eta_1,\quad 
     Y_2   = \beta Y_1 +  \eta_2, 
    \quad \beta>0.
\end{equation} 
Its exponent measure law concentrates only on two directions: the ray $\{y_2=\beta y_1\}$ direction when $\eta_1$ is active (i.e., becomes nonzero), and the $y_2$-axis direction when $\eta_2$ is active. See the left panel of Figure \ref{fig:sim eSCM vs gen eSCM} for a graphical illustration. 
However, a randomized  $\beta=\beta(\theta_2)$ in \eqref{eq:Y_1 -> Y_2}, if distributed on an interval with a continuous distribution, may induce a continuum of directions $\{y_2=\beta(\theta_2) y_1\}$ (cf. the right panel of Figure \ref{fig:sim eSCM vs gen eSCM}).


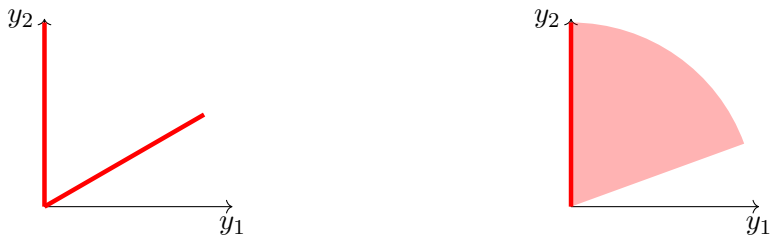
\begin{figure}[h]
    \centering

        



            

    \begin{tikzpicture}[scale=0.5]

    \draw[->] (0,0) -- (5,0) node[anchor=north] {$y_1$};
    \draw[->] (0,0) -- (0,5) node[anchor=east] {$y_2$};
    
    \draw[-, line width=0.6mm, red] (0,0) -- (0,4.9);
    \draw[-, line width=0.6mm, red] (0,0) -- ({4.9*cos(30)}, {4.9*sin(30)});

    \begin{scope}[xshift=14cm]

        \draw[->] (0,0) -- (5,0) node[anchor=north] {$y_1$};
        \draw[->] (0,0) -- (0,5) node[anchor=east] {$y_2$};

        \draw[-, line width=0.6mm, red] (0,0) -- (0,4.9);

        \fill[red, opacity=0.3] (0,0) -- ({4.9*cos(20)}, {4.9*sin(20)}) 
                               arc[start angle=20, end angle=90, radius=4.9] 
                               -- cycle;

    \end{scope}
\end{tikzpicture}
    \caption{\footnotesize{Illustration of the law of $(Y_1,Y_2)$ in \eqref{eq:Y_1 -> Y_2} when $\beta$ is fixed (left) v.s. when it randomized (right). A thick solid line denotes a  mass concentration, whereas the shaded cone illustrates randomization.}  
    }
    \label{fig:sim eSCM vs gen eSCM}
\end{figure}



{
Before discussing further properties of eSCMs, we briefly comment on the rationale for formulating them on an infinite measure space. Although the exponent measure $\Lambda$ is conceptually fundamental, much of the literature on multivariate extremes works instead with probability distributions characterizing $\Lambda$, such as the angular measure on the unit sphere $\mathbb{S}_+^{V}:=\{\mbf{y}\in[0,\infty)^V:\|\mbf{y}\|=1\}$ 
and the multivariate Pareto threshold exceedance law  given by the normalized probability measure $\Lambda(\cdot\cap\bb{L})/\Lambda(\bb{L})$ on $\bb{L}:=\{\mbf{y}\in[0,\infty)^V:\|\mbf{y}\|_\infty\ge 1\}$.} 

In principle, eSCMs can also be formulated on such subspaces, thereby working with probabilistic random variables rather than improper ones. However, despite the tradeoff of dealing with an infinite measure, working directly on $\bb{E}_V$ offers both intuitive clarity and mathematical elegance. For instance, the function $h_v$ in \eqref{eq:general eSCM} is easily specified with only a homogeneity requirement, whereas additional constraints are required in the subspace formulations. Moreover, all reasoning and definitions of eSCMs can be developed within a unified framework on $\bb{E}_V$, while still allowing transitions to subspace representations; see, for example, Section~\ref{sec:pf mult Pareto HR} of the supplement \cite{fang2025supplement}.

In the sequel, although a complete description of an eSCM involves the data \\$(\mbf{Y},\cl{G},\sbf{\eta},\sbf{\theta},(\Omega,\cl{F},\mu),(a_v)_{v\in V}, (h_v, v\in V))$ in Definition \ref{def:eSCM}, we shall simply use the extremal variable symbol $\mbf{Y}$ to refer to an eSCM.

\subsection{Basic properties of the Law of eSCM}\label{sec:law eSCM}

By a recursion of \eqref{eq:general eSCM} tracing back through ancestral relations, we have 
\begin{equation}\label{eq:Y=F(eta,theta)}
    \mbf{Y}=\pp{Y_v}_{v\in V}=\mbf{F}_{\cl{G}}(\boldsymbol{\eta},\sbf{\theta}):= \pp{F_{\cl{A}(v)}\pp{ \sbf{\eta}_{\mrm{An}(v)}, \sbf{\theta}_{\mrm{An}(v)} }}_{v\in V},
\end{equation}
for some measurable functions $F_{\cl{A}(v)}:[0,\infty)^{\mrm{An}(v)}\times [0,1]^{\mrm{An}(v)} \mapsto [0,\infty)$ such that $$F_{\cl{A}(v)}\pp{ c \, \cdot , \sbf{\theta}_{\mrm{An}(v)} }= c F_{\cl{A}(v)}\pp{ \,   \cdot \, , \sbf{\theta}_{\mrm{An}(v)} },$$ for any $c\ge 0$, $\sbf{\theta}_{\mrm{An}(v)}\in [0,1]^{ \mrm{An}(v)}$, $v\in V$. This, in particular, implies that $F_{\cl{A}(v)}\pp{ \mbf{0}_{\An(v)},  \cdot }\equiv 0$. In Proposition \ref{Pro:basic} below, we give a moment-type characterization of Condition 2 in Definition \ref{def:eSCM}, as well as the confirmation of $\cl{L}(\mbf{Y})$ as an exponent measure in the sense of Section \ref{subsec:MEVT}. 

\begin{proposition}\label{Pro:basic}
 Following the construction in Definition \ref{def:eSCM}, we have 
\begin{equation}\label{eq:s_v moment}
 s_v:= \mu(Y_v>1)=s\sum_{u\in  \mrm{An}(v)}\E_{\sbf{\theta}}\pb{F_{\cl{A}(v)}\pp{ \pp{\mbf{1}_{\{w=u\}}}_{w\in \mrm{An}(v)}, \sbf{\theta}_{\mrm{An}(v)} }^\alpha}, ~\ v\in V,
 \end{equation}
 where $s>0$ is as in \eqref{eq:Lambda_perp},    $\E_{\sbf{\theta}}$   denotes the expectation with respect to $\Prt_{\sbf{\theta}}$.
 In addition,   the law $\Lambda=\cl{L}(\mbf{Y})$ is an exponent measure that satisfies \eqref{eq:Lambda homo} and \eqref{eq:Lambda marginal} with $s_v$ as in \eqref{eq:s_v moment}. Moreover, a sufficient condition for $s_v<\infty$ for all $v\in V$ is that $h_v\pp{ \mbf{Y}_{\pa(v)},\theta_v }\le C(\theta_v) \|\mbf{Y}_{\pa(v)}\|$ $\mu$-a.e.\ for some measurable $C_v:[0,1]\mapsto [0,\infty)$ such that $\E |C(\theta_v)|^{\alpha}<\infty$, for all $v\in V$.
\end{proposition}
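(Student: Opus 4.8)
The plan is to work from the explicit representation \eqref{eq:Y=F(eta,theta)}, which expresses each $Y_v$ as a homogeneous function $F_{\cl{A}(v)}$ of the activation variables $\sbf{\eta}_{\An(v)}$ and randomizers $\sbf{\theta}_{\An(v)}$. The crucial structural fact is that under $\Lambda^\perp$ the activation vector $\sbf{\eta}$ lives on the axes $\bb{A}_V$: with full $\Lambda^\perp$-mass exactly one coordinate $\eta_u$ is positive and the rest vanish. So I would first decompose the measure space according to which coordinate is ``active.'' Writing $\Lambda^\perp = \sum_{u\in V}\Lambda^\perp_u$ where $\Lambda^\perp_u$ is the part supported on the $u$-th axis, on the event $\{\eta_u>0,\ \eta_w=0\ (w\neq u)\}$ we have $\sbf{\eta}_{\An(v)} = \eta_u\,(\mathbf 1_{\{w=u\}})_{w\in\An(v)}$ when $u\in\An(v)$, and $\sbf{\eta}_{\An(v)}=\mathbf 0_{\An(v)}$ (hence $Y_v=0$) when $u\notin\An(v)$. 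Using homogeneity, on the first event $Y_v = \eta_u\, F_{\cl{A}(v)}\big((\mathbf 1_{\{w=u\}})_{w\in\An(v)},\sbf{\theta}_{\An(v)}\big)$.

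Next I would compute $s_v=\mu(Y_v>1)$ by summing the contributions over the active coordinate $u$. By Fubini (the $\sbf\theta$-coordinates are independent of $\sbf\eta$ under the product measure $\Lambda^\perp\otimes\Prt_{\sbf\theta}$), for each $u\in\An(v)$ the contribution is
\[
\int \Lambda^\perp_u\big(\eta_u > t^{-1}\big)\,\Prt_{\sbf\theta}(d\sbf\theta)
\quad\text{with}\quad t = F_{\cl{A}(v)}\big((\mathbf 1_{\{w=u\}})_{w},\sbf\theta_{\An(v)}\big),
\]
and since $\Lambda^\perp(\eta_u>y)=sy^{-\alpha}$ by \eqref{eq:Lambda_perp}, this integral equals $s\,\E_{\sbf\theta}\big[F_{\cl{A}(v)}((\mathbf 1_{\{w=u\}})_{w},\sbf\theta_{\An(v)})^\alpha\big]$. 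Coordinates $u\notin\An(v)$ contribute zero. Summing over $u\in\An(v)$ yields \eqref{eq:s_v moment}. Condition 2 in Definition~\ref{def:eSCM} is then precisely the statement $s_v\in(0,\infty)$, establishing the moment characterization; strict positivity also follows since if every summand were zero then $Y_v=0$ $\mu$-a.e., contradicting Condition~2 (so the theorem's hypotheses are in force and the formula holds as stated).

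To confirm $\Lambda=\cl L(\mbf Y)$ is an exponent measure, I would verify \eqref{eq:Lambda homo} and \eqref{eq:Lambda marginal}. Homogeneity: for $c>0$, $\Lambda(cA) = \mu(\mbf Y\in cA) = \mu(\mbf F_{\cl G}(\sbf\eta,\sbf\theta)\in cA) = \mu(\mbf F_{\cl G}(c^{-1}\sbf\eta,\sbf\theta)\in A)$ by the homogeneity of $\mbf F_{\cl G}$ (inherited from Condition~1, as noted after \eqref{eq:Y=F(eta,theta)}), and then the change of variables $\sbf\eta\mapsto c\sbf\eta$ together with the scaling $\Lambda^\perp(c\,\cdot)=c^{-\alpha}\Lambda^\perp(\cdot)$ — which holds because each $\Lambda^\perp_u$ has the Pareto tail in \eqref{eq:Lambda_perp} — gives $\Lambda(cA)=c^{-\alpha}\Lambda(A)$. (One should note the $\sbf\theta$-marginal is a probability measure, so it is unaffected by this scaling.) The marginal condition \eqref{eq:Lambda marginal} is exactly $\Lambda(\{y_v>1\})=s_v$, already computed. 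Finiteness away from $\mbf 0_V$ then follows from homogeneity plus finiteness of the marginal masses $s_v$ via the standard bound $\Lambda(\{\|\mbf y\|_\infty>\epsilon\})\le \epsilon^{-\alpha}\sum_v s_v<\infty$.

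Finally, for the sufficient condition for $s_v<\infty$: iterating the bound $h_v(\mbf Y_{\pa(v)},\theta_v)\le C(\theta_v)\|\mbf Y_{\pa(v)}\|$ along the DAG $\cl A(v)$ shows $F_{\cl A(v)}(\mathbf 1_{\{w=u\}},\sbf\theta_{\An(v)})\le \prod_{\text{edges on paths}} (a_{\cdot}\vee C(\theta_{\cdot}))\cdot(\text{const})$ — more precisely a finite sum over directed $u\to v$ paths of products of the per-edge factors $C(\theta_w)$ — and then $\E_{\sbf\theta}[(\cdot)^\alpha]<\infty$ follows from $\E|C(\theta_w)|^\alpha<\infty$ for each $w$, the independence of the $\theta_w$, finitely many paths, and Minkowski's inequality to split the finite sum. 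The main obstacle I anticipate is bookkeeping the recursion cleanly — making precise how $F_{\cl A(v)}$ factors through edges so that the norm-growth estimate telescopes — rather than any deep analytic difficulty; the measure-theoretic core (axis decomposition plus Fubini) is routine once set up carefully, with the one subtlety being that one must restrict $\mu(\mbf Y\in\cdot)$ to $\bb E_V$ to discard any atom at $\mbf 0_V$ before calling the result an exponent measure.
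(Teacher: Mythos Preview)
Your proposal is correct and follows essentially the same approach as the paper: axis decomposition of $\Lambda^\perp$ plus Fubini for \eqref{eq:s_v moment}, homogeneity of $\mbf{F}_{\cl G}$ plus the scaling of $\Lambda^\perp$ for \eqref{eq:Lambda homo}, and an inductive norm bound along ancestors for the finiteness claim. The only cosmetic difference is in the last step: the paper takes $\|\cdot\|=\|\cdot\|_\infty$ and sets $C_v^*(\theta_v)=a_v\vee C_v(\theta_v)$ to get a single product bound $Y_v\le \big(\prod_{u\in\An(v)}C_u^*(\theta_u)\big)\|\sbf\eta_{\An(v)}\|_\infty$, avoiding the sum over paths and the Minkowski step you sketch; either bookkeeping works.
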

  As a consequence of the homogeneity property of $\cl{L}(\mbf{Y})$, we also have 
$$
\mu(Y_v>y) =s_v y^{-\alpha}, \quad  y\in (0,\infty).
$$
Furthermore, the single-activation nature of $\sbf{\eta}$ induces a decomposition of $\Lambda$.
Given a DAG $\cl{G}=(V,E)$ and a node $v\in V$, let $\de(v)$ denote the set of descendants of $v$, i.e. nodes that $v$ can reach through directed paths. 
In addition, we use $\cl{D}(v)$ to denote the descendant sub-DAG formed by the node set $\De(v)$ and the edge set consisting of the edges of all directed paths from $v$ to $\de(v)$.

On the event $\{\eta_v>0\}$, $v\in V$, since $\eta_w=0$ for any $w\neq v$, we see that $\mbf{Y}_{\nd(v)} =\mbf{0}_{\nd(v)}$ in view of \eqref{eq:Y=F(eta,theta)}. Therefore, on $\{\eta_v>0\}$, 
\begin{equation}\label{eq:single eta}
Y_v=a_v \eta_v ,\quad Y_u= h_u\pp{\pp{\mbf{Y}_{\pa(u)\cap \De(v) }, \mbf{0}_{\pa(u)\cap \nd(v)} }, \theta_u},  \quad u\in \de(v),
\end{equation}
where $h_u$ is specified in \eqref{eq:general eSCM}. The vector 
$\big(\mathbf{Y}_{\pa(u)\cap \De(v)}, \, \mathbf{0}_{\pa(u)\cap \nd(v)} \big)$ 
denotes the $\pa(u)$-indexed vector whose $(\pa(u)\cap \De(v))$-components are given by 
$\mathbf{Y}_{\pa(u)\cap \De(v)}$, while the remaining components corresponding to 
$\pa(u)\cap \nd(v)$ are set to $0$.
Equation \eqref{eq:single eta} explains that on $\{\eta_v>0\}$ with $a_v>0$, the eSCM essentially reduces to a sub-eSCM indexed by the descendant sub-DAG $\cl{D}(v)$ with a single root $v$.  Therefore, the total eSCM can be viewed as a mixture of sub-SCMs induced by these activations. In particular, $\Lambda=\cl{L}(\mbf{Y})$ can be decomposed as  
\begin{equation}\label{eq:mix act}
\Lambda=\sum_{v\in V} \Lambda_v=\sum_{v\in V, a_v> 0} \Lambda_v,
\end{equation}
where $\Lambda_v:=\mu(\mbf{Y}\in \cdot,\ \mbf{Y}\neq \mbf{0}_V ,\ \eta_v>0 )$ is supported on the coordinate face 
$ 
\{\mbf{y}\in \bb{E}_V: \  \mbf{y}_{\nd(v)}=\mbf{0}_{\nd(v)}  \}$. 

To understand the second equality in \eqref{eq:mix act}, consider the case where $a_v=0$ for some $v\in V$. This cannot happen if $\pa(v)=\emptyset$, e.g., if $v$ is a root node in $\cl{G}$ or $v$ is an isolated node, since otherwise one would have $Y_v\equiv 0$, contradicting Condition 2 in Definition \ref{def:eSCM}. 
Then assume $\pa(v)\neq\emptyset$ and $a_v=0$. In this case, $Y_v>0$ is possible only when $\mbf{Y}_{\pa(v)}\neq \mbf{0}_{\pa(v)}$, which requires $\eta_u>0$ for some $u\in \an(v)$.
Therefore, on $\{\eta_v>0\}$, we have $\mbf{Y}=\mbf{0}_V$.
Furthermore, $\cl{L}(\mbf{Y})$ excludes the origin $\mbf{0}$, so that when $a_v=0$, we do not observe $\{\eta_v>0\}$ from 
$\cl{L}(\mbf{Y})$, and the associated component $\Lambda_v$  in \eqref{eq:mix act} is zero.{On the other hand, allowing $a_v=0$ adds flexibility to the law $\mathcal{L}(\mathbf{Y})$: in this case, $Y_v$ is nonzero only through input from its parent nodes. This feature is indispensable for the existence results in Theorem~\ref{Thm:Lambda expr by eSCM} and Corollary~\ref{cor:equiv}.}

Furthermore, the decomposition in \eqref{eq:mix act} also reveals that $\cl{L}(\mbf{Y})$ governed by an eSCM is typically \emph{not} absolutely continuous (thus it does not admit a density) throughout $\bb{E}_V$, but rather possibly a mixture of laws that are absolutely continuous with respect to lower-dimensional Lebesgue measure on coordinate faces.
 A noteworthy exceptional case occurs when the DAG $\cl{G}$ has only a single root node with single nonzero activation coefficient, as was essentially considered in \cite{engelke2025extremes}.

\subsection{Examples}\label{sec:example}
We now give some concrete examples of   eSCMs.
Consider the simple sum- and max-linear eSCMs, whose proper structural functions $h_v$ in \eqref{eq:general eSCM} are given by
\begin{equation}\label{eq:Lin-eSCM}
h_v(\mbf{y}_{\mrm{pa}(v)},\eta_v)=\sum_{u\in \mrm{pa}(v)}   \beta_{uv}  y_u 
\end{equation}
and  
\begin{equation}\label{eq:MaxLin-eSCM}
h_v(\mbf{y}_{\mrm{pa}(v)},\eta_v)=\bigvee_{u\in \mrm{pa}(v)}   \beta_{uv}  y_u,
\end{equation}
respectively, with coefficients $\beta_{uv}  \in (0,\infty)$, and $\mbf{y}_{\pa(v)}\in [0,\infty)^{\pa(v)}$, $v\in V$.   Equations~\eqref{eq:Lin-eSCM} and \eqref{eq:MaxLin-eSCM} correspond to non-extremal SCMs considered in \cite{gnecco2021causal} and \cite{gissibl2018max}, respectively. In fact, the law $\cl{L}(\mbf{Y})$ given by these eSCMs arises exactly through the scaling relation \eqref{eq:Lambda} when $\mbf{X}$ is given by the SCMs in \cite{gnecco2021causal} and \cite{gissibl2018max}, under appropriate heavy-tail assumptions on the innovation variables; we will elaborate on this in Section~\ref{Sec:limit}.

In addition to \eqref{eq:Lin-eSCM} and \eqref{eq:MaxLin-eSCM}, 
we further discuss two specific examples motivated by models in the existing literature.  Let $(\Omega,\cl{F},\mu)$, $\pp{\sbf{\eta},\sbf{\theta}}=\pp{ \pp{\eta_v}_{v\in V},\pp{\theta_v}_{v\in V}}$, and $\Prt_{\sbf{\theta}}$ be as in Definition \ref{def:eSCM}.

\begin{example}\label{Eg:ML Noise}\emph{(Max-linear eSCM with propagating noise.)}
   {\rm This example is motivated by \cite{buck2021recursive}; see also \cite{tran2024estimating}.  Let $F_\epsilon$ be the CDF of a random variable $\epsilon\in (0,\infty)$ with $\E \pb{\epsilon^\alpha}<\infty$. Let $(\epsilon_v)_{v\in V}:=\pp{ F^{-1}_\epsilon(\theta_v)}_{v\in V}$, where $F^{-1}_\epsilon$ is the generalized inverse of $F_\epsilon$. The variables $(\epsilon_v)_{v\in V}$ under $\Prt_{\sbf{\theta}}$  are i.i.d.\ following $F_\epsilon$.  Consider a DAG $\cl{G}=(V,E)$ with $d=|V|\in \bb{Z}_+$, and we associate each $(u,v)\in E$ a positive coefficient $a_{uv}>0$, and let $a_{uv}=0$ for $(u,v)\in V^2$ but $(u,v)\notin E$. 
  Suppose the eSCM \eqref{eq:general eSCM} has a proper structural function $h_v$ of the max-linear form: 
\begin{equation}\label{eq:h_v ML Noise}
h_v\pp{ \mbf{y}_{\pa(v)}, \theta_v }=   \epsilon_v\pp{ \bigvee_{u\in \pa(v)} a_{uv}    y_u  }.  
\end{equation}
When $\epsilon_v$ is a non-random constant, combining \eqref{eq:h_v ML Noise} with \eqref{eq:general eSCM} gives the simple max-linear eSCM \eqref{eq:MaxLin-eSCM}. The finiteness of $s_v$ in \eqref{eq:s_v moment} is satisfied due to the sufficient condition in Proposition \ref{Pro:basic}, since 
we have imposed $\E\pb{\epsilon^\alpha}<\infty$. For instance, one may assume $\epsilon$ follows a log-normal distribution as in \cite{tran2024estimating}.}


\end{example}

\begin{example}\label{Eg:HR eSCM}\emph{(H\"usler-Reiss eSCM).}
 {\rm This example is due to \cite{engelke2025extremes}, although not formally described within the eSCM framework.  Assume that the causal DAG $\cl{G}$ has a single root node, say node $1$, with an activation coefficient $a_1>0$, which implies that $\cl{G}$ has a single connected component. 
Suppose also $a_v=0$ for all non-root nodes $v\neq 1$.  These assumptions are necessary, as remarked in the discussion following \eqref{eq:mix act} to ensure that $\cl{L}\left(\mbf{Y}\right)$ is absolutely continuous with respect to the Lebesgue measure on $\bb{E}_V$.

Let $\Phi:\bb{R}\mapsto (0,1)$ denote the standard normal CDF, and $(Z_v)_{v\in V}:=\pp{ \mu_v+\sigma_v \Phi^{-1}(\theta_v)}_{v\in V}$, are independent normal random variables following $N(\mu_v,\sigma_v^2)$, $\mu_v\in \bb{R}$, $\sigma_v>0$, for $v\in V$, under $\Prt_{\sbf{\theta}}$  .  
 Consider a DAG $\cl{G}=(V,E)$, and we associate each $(u,v)\in E$ with a nonzero real coefficient $b_{uv}$, and set $b_{uv}=0$ for $(u,v)\in V^2$ but $(u,v)\notin E$. Impose the following  normalization condition:
 \begin{equation}\label{eq:HR coef norm}
 \sum_{u\in \pa(v)} b_{uv}=1,  \quad \text{for } v\in \{2,\ldots,d\}.   
 \end{equation}
Then suppose the eSCM \eqref{eq:general eSCM} admits a proper structural function $h_v$ of the form 
\begin{equation}\label{eq:h_v HR}
h_v\pp{ \mbf{y}_{\pa(v)}, \theta_v }= \exp\pp{ \sum_{u\in \pa(v)}b_{uv}  \log y_u  + Z_{v}  }= \pc{\prod_{u\in \pa(v)} y_u^{b_{uv}}} \exp(Z_v),
\end{equation}
if   $y_u>0$ for all $u\in \pa(v)$, and   $h_v\pp{ \mbf{y}_{\pa(v)}, \theta_v }=0$ if $\mbf{y}_{u}=0$ for some $u\in \pa(v)$.  

Since the root node $1$ is the only node with a nonzero activation coefficient, we have $\eta_1>0$ if and only if $Y_v>0$ for some $v\in V$, which is also equivalent to $Y_v>0$ for all $v\in V$. Observe that on the log-transformed scale of $\mbf{y}$ variables, \eqref{eq:h_v HR} specifies a linear structural relation with Gaussian noise. 
 The normalization \eqref{eq:HR coef norm} is to ensure that the function $h_v(\cdot, \theta_v)$ is homogeneous. 
 We call the resulting eSCM \eqref{eq:general eSCM} with $h_v$ in \eqref{eq:h_v HR} a \emph{H\"usler-Reiss eSCM}. 
 The name is justified by the fact that $\cl{L}(\mbf{Y})$ corresponds to a H\"usler-Reiss generalized multivariate Pareto law (e.g., \cite{rootzen2006multivariate,rootzen2018multivariate,kiriliouk2019peaks}). See Section \ref{sec:pf mult Pareto HR} in the supplement \cite{fang2025supplement} for more details.}


 
\end{example}

\subsection{Approximation of eSCMs by probablistic SCMs}\label{Sec:limit}

 The scaling relation \eqref{eq:Lambda} connects the exponent measure $\Lambda$ to the probabilistic law of the data $\mathbf{X}$. Meanwhile, the law of an eSCM has been formulated directly in terms of an exponent measure. This naturally raises the question: Can an eSCM \eqref{eq:general eSCM} emerge as the scaling limit of a probabilistic structural equation model (SCM) \eqref{eq:usual SCM}? This question is also of practical value. While an eSCM serves as an idealized model capturing the limiting extremal behavior, statistical analysis is conducted on finite-sample (pre-limit) data. It is therefore desirable to develop pre-limit models, such as probabilistic SCMs, that approximate eSCMs in the limit, enabling realistic simulations.
 We note that a similar idea appears in \cite{engelke2025extremes}.  However, unlike \cite{engelke2025extremes} which focuses on the single activation at a unique root node, we formulate a scheme that incorporates more general cases with multiple root nodes in the causal DAG and multiple nonzero activations.  
 
Suppose that a DAG $\cl{G}$ is given with a vertex set $V$. Motivated by the eSCM in \eqref{eq:general eSCM}, we also consider i.i.d.\ Uniform$(0,1)$ random variables $(\theta_v)_{v\in V}$, and let $\pp{\zeta_v}_{v\in V}$ be nonnegative random variables independent of $(\theta_v)_{v\in V}$, such that $\Prt\left(\zeta_v>x\right)\sim s x^{-\alpha}$, $\alpha>0$,  $s>0$, and $\Prt\pp{\zeta_u>x \mid \zeta_v> x }\rightarrow 0$, as $x\rightarrow\infty$ for distinct $u,v\in V$, i.e. $\zeta_v$'s are extremaly independent. The assumptions on $\sbf{\zeta}:=\pp{\zeta_v}_{v\in V}$ imply that $\sbf{\zeta}$ is MRV and $t\rightarrow\infty$,
 \begin{equation}\label{eq:zeta asymp indep}
 t\Prt\pp{  t^{-1/\alpha} \sbf{\zeta}\in  \cdot } \overset{\mbf{v}}{\rightarrow} \Lambda^\perp(\cdot),
 \end{equation}
 where $\Lambda^\perp$ is as in \eqref{eq:Lambda_perp}; see \cite[Proposition 2.1.8]{kulik2020heavy}. 
 
 Now consider the probabilistic SCM of the form
 \begin{equation}\label{eq:pre limit SCM}
  X_v=g_v\pp{ \mbf{X}_{\pa(v)}, \zeta_v, \theta_v},  \quad v\in V,   
 \end{equation}
 for some suitable function $g_v$ (see Theorem \ref{Thm:limit eSCM} below). We assume that $g_v$'s and, consequently, the variables $X_v$'s  are nonnegative, which is a reasonable assumption when interpreting $\mbf{X}$ as the post-marginal-transform data as discussed in Section \ref{subsec:MEVT}. See also \cite{engelke2025extremes} for a similar consideration.

 Comparing \eqref{eq:pre limit SCM} with \eqref{eq:usual SCM}, we observe that the random innovation $e_v$ has been effectively split into two components, $\pp{\zeta_v,\theta_v}$. 
However, since we do \emph{not} require the $\zeta_v$'s to be probabilistically independent, the model \eqref{eq:pre limit SCM} goes beyond the framework of a conventional probabilistic SCM.
Theorem~\ref{Thm:limit eSCM} shows that $\mbf{X}$ defined in \eqref{eq:pre limit SCM} has a scaling limit with law $\cl{L}(\mbf{Y})$.

 \begin{theorem}\label{Thm:limit eSCM}
Suppose the setup in \eqref{eq:pre limit SCM} holds, and
 we further assume the following.
 \begin{enumerate} 
 \item Each measurable function $g_v:[0,\infty)^{\pa(v)}\times [0,\infty)\times [0,1] \mapsto [0,\infty),  \pp{\mbf{x}_{\pa(v)}, \zeta, \theta  }\mapsto g_v\pp{\mbf{x}_{\pa(v)}, \zeta, \theta  }$, $v\in V$, is \emph{asymptotically homogeneous} in its $(\mbf{x}_{\pa(v)},\zeta)$-component in the following sense.  
  There exists a measurable function 
\[
f_v^* : [0,\infty)^{\pa(v)} \times [0,\infty) \times [0,1] \to [0,\infty)
\]
 satisfying $f_v^*(\mathbf{0}_{\pa(v)},0,\theta)=0$  for any $\theta\in [0,1]$,
such that for any maps $
t \mapsto \mathbf{x}_{\pa(v)}(t) \in [0,\infty)^{\pa(v)}$,
$t \mapsto \zeta(t) \in [0,\infty)$, $t>0$,
with 
\[
\mathbf{x}_{\pa(v)}(t) \to \mathbf{y}_{\pa(v)} \in [0,\infty)^{\pa(v)},
\qquad 
\zeta(t) \to \eta \in [0,\infty),
\qquad t \to \infty,
\]
we have,  as $t\rightarrow\infty$
\[
t^{-1}\, g_v\!\left(t\,\mathbf{x}_{\pa(v)}(t), \; t\,\zeta(t), \; \theta\right) 
\;\longrightarrow\; f_v^*(\mathbf{y}_{\pa(v)}, \eta, \theta), \quad \text{for any $\theta \in [0,1]$.}
\]

 \item For each $v\in V$, there exists measurable  $C_v:[0,1]\mapsto [0,\infty)$, such that $g_v(\mbf{X}_{\pa(v)},\zeta_v, \theta_v )\le C_v(\theta_v) \|\pp{\mbf{X}_{\pa(v)},\zeta_v}\|$ a.s.,  and $\E_{\sbf{\theta}} \pb{C_v(\theta_v)^\alpha}<\infty$.
 
 \item For each $v\in V$, {$\liminf_{t\rightarrow\infty}t\Prt(t^{-1/\alpha}X_v>x)>0$ for some $x>0$.} 
 \end{enumerate}
  Then  each $f_v^*$  satisfies $f_v^*(c\mbf{y}_{\pa(v)},c\eta,\theta)=c f_v^*(\mbf{y}_{\pa(v)},\eta,\theta)$ for any $c\ge 0$, $\theta\in [0,1]$. 
  Furthermore, with the eSCM $\mbf{Y}$ constructed as in \eqref{eq:general eSCM}, but with $f_v$ replaced by $f_v^*$, we have as $t\rightarrow\infty$:
\begin{equation}\label{eq:X limit Y}
      t\Prt\pp{  t^{-1/\alpha} \mbf{X}\in  \cdot } \overset{\mbf{v}}{\rightarrow} \cl{L}(\mbf{Y}).
\end{equation}
 \end{theorem}
 We note that although $f_v^*$ is not readily of the form in \eqref{eq:general eSCM}, it can be reduced to that form via the modification in Remark \ref{Rem:red f_v}. A similar asymptotic homogeneity assumption is used in \cite{engelke2025extremes}. Asymptotic homogeneity of $g_v$ in its $(\mbf{x}_{\pa(v)},\zeta)$-component follows if exact homogeneity holds 
 and $g_v$ is continuous.     
 This   applies, for instance, when $g_v$ has a sum-linear or max-linear form as in \eqref{eq:Lin-eSCM} or \eqref{eq:MaxLin-eSCM} respectively, where $g_v$ does not depend on the randomization variable $\theta_v$.

Some examples of $\mbf{X}$  in \eqref{eq:pre limit SCM} can be found in Section \ref{sec:sim} below. See also \cite{engelke2025extremes} for further examples of nontrivial asymptotic homogeneity, noting that their descriptions on the exponential marginal scale can be translated to our Pareto marginal scale via suitable exponentiation.


\subsection{Interventions of eSCM}\label{Sec:intervention}

Assessing interventional effects, identified by \cite{pearl2009causality} as the second level in the ladder of causal modeling, beyond the level of statistical associations, has a well-established formalism in the framework of the usual structural equation model \eqref{eq:usual SCM}; see, for example, \cite[Definition 6.8]{peters2017elements}. In this section, we provide an initial discussion of the interventional properties of the eSCM introduced in Definition \ref{def:eSCM}, while leaving a more comprehensive treatment, including counterfactual analysis at the third level of the causal ladder, for future work.

  We adopt an idea similar to that of the \emph{intervention variable} (cf. \cite[Section 3.2.2]{pearl2009causality}). Suppose that an eSCM $\mbf{Y}$ with respect to a DAG $\cl{G}=(V,E)$ is given as in Definition \ref{def:eSCM}. Let $V_0$ be a nonempty subset of $V$, which consists of all variables to be intervened. 
   To model the intervention, we introduce a new  node $d+1$ that has directed edges to each $v\in V_0$. For each $v$, we associate a measurable function  $h_{v}^\star:[0,\infty)\times [0,1]\mapsto  [0,\infty)$ that satisfies $h_v^\star(cy, \theta)=ch_v^\star(y, \theta)$ for any $\theta\in [0,1]$, $c\ge 0$, $y\ge 0$. We refer to these $h_v^\star$, $v\in V_0$, as \emph{intervention functions}.   
   Denote the  DAG  with the added node and edges as $\cl{G}^\star=(V^\star=(1,\ldots,d+1),E^\star)$.  Now we construct an eSCM with respect to $\cl{G}^\star$ that incorporates the intervention node $d+1$, which is a root node in $\cl{G}^*$.
\begin{definition}\label{Def:interv}
    Let $\pp{\sbf{\eta}^\star=(\eta_v)_{v\in {V^\star}},\sbf{\theta}^\star= (\theta_v)_{v\in V^\star}}$ be as in Definition \ref{def:eSCM} with the role of $V$ replaced by $V^\star$ above.  The intervention eSCM $\mbf{Y}^\star=\pp{\mbf{Y},Y_{d+1}}$ is defined as
\begin{align}\label{eq:intervention eSCM}
Y_v=& f_v^\star\pp{\mbf{Y}_{\pa^\star(v)}, \eta_v, \theta_v}  := 
\begin{cases}
 a_v^\star \eta_v+ h^\star_v(Y_{d+1}, \theta_{d+1}),\  & v\in V_0,\\
f_v\pp{\mbf{Y}_{\pa(v)}, \eta_v, \theta_v}, \  &v\in V\setminus V_0,\\
 a_{d+1}\eta_{d+1}, &v=d+1,
\end{cases}
\end{align}
where the activation coefficient  $a_{d+1}>0$,   $h_v^\star$, $v\in V_0$, are the intervention functions  as described above, and $a_v^\star\in [0,\infty)$, $v\in V_0$, are post-intervention activation coefficients, i.e., the new activation coefficients introduced by the intervention that can be different from the original activation coefficients $a_v$'s. 
\end{definition}
Observe that the replacement of the structural relations for a node $v\in V_0$ by $ a_v^\star \eta_v+h^\star(Y_{d+1},\theta_{d+1})$ can be viewed as erasing the original directed edges pointing to $v$ (i.e., $\pa(v)$ in $\cl{G}$ ceases to affect $v$). 
This is consistent with the interpretation of an intervention enforced by an external source node $d+1$ on the original eSCM system. See Figure \ref{fig:DAG intervene illu}.

Next, note that the intervention eSCM $\mathbf{Y}^\star$  is itself an eSCM that satisfies Definition \ref{def:eSCM}. Thus, this formulation of intervention is still contained in the eSCM framework. Furthermore,  one may extend  Definition \ref{Def:interv} to incorporate more intervention variables in addition to $Y_{d+1}$. For simplicity, we here restrict the discussion here to the single intervention variable case, which suffices to cover the important scenario of  deterministic (or say atomic) intervention of the original eSCM below.

 For \( V_0 \subset V \), a  \emph{deterministic (or atomic) intervention}  involves setting each \( Y_v \) to a fixed value \( \xi_v \in (0, \infty) \) for \( v \in V_0 \), which is commonly denoted in the literature as \( \mathrm{do}(Y_v = \xi_v, \, v \in V_0) \).
It is important to note that, without introducing the additional intervention node \( d+1 \), if two nodes \( u, v \in V_0 \) are extremally independent, which happens  when \( \An(u) \cap \An(v) = \emptyset \)  (e.g., $u=1$, $v=5$ in Figure \ref{fig:DAG illu}), it is impossible for both \( Y_v > 0 \) and \( Y_u > 0 \) to occur simultaneously in the original eSCM. However, with the introduction of the intervention node \( d+1 \) which links to both $u$ and $v$ (e.g.,  $u=1$, $v=5$ in Figure \ref{fig:DAG illu}), this issue can be circumvented.
To do so, we set the intervention functions \( h_v^\star \) in equation \eqref{eq:intervention eSCM} as:
\[
h_v^\star(y_{d+1}, \theta_v) = \frac{\xi_v}{a_v} y_{d+1}.
\]
Thus, the intervened eSCM \( \left( \mathbf{Y} \mid \mathrm{do}(Y_v = \xi_v, v \in V_0) \right) \), with \( \mathbf{Y}^\star = \left( \mathbf{Y}, Y_{d+1} \right) \), can be interpreted through the conditional laws of \( \mathbf{Y} \mid (Y_{d+1} = 1) \), or equivalently \( \mathbf{Y} \mid \left( \eta_{d+1} = a_{d+1}^{-1} \right) \).
Note that when \( \eta_{d+1} > 0 \), we have \( \eta_v = 0 \) for \( v \neq d+1 \). Consequently, \( Y_v = 0 \) if \( d+1 \notin \An(v) \), based on  \eqref{eq:Y=F(eta,theta)}. This leads to an unusual phenomenon: the intervention \( \mathrm{do}(Y_v = \xi_v, v \in V_0) \) causes an extremal variable to vanish whenever it is not a descendant of the nodes in \( V_0 \). See Figure \ref{fig:DAG intervene illu} again for an illustration.

\begin{figure}
\tikzset{
  VertexStyle/.append style = {minimum size=18pt},
  EdgeStyle/.append style   = {->, >=Latex}
}
\begin{tikzpicture}[scale=1] 
    \SetGraphUnit{2} 
    \Vertex{2}  
    \EA(2){3}  

    \SOWE(2){1}  
    \SOEA(3){4}

    \NOWE(2){5}   
    \NOEA(3){6}

    \Edges(1,2)
    \Edges(3,4)  
    \Edges(5,2)        
    \Edges(3,6) 

    \NO(3){7} 
    
 \node[above=0.1cm of 7] {Intervention node};

    \Edge(7)(3) 

    \node[draw, dashed, fit=(1)(2)(5), inner sep=8pt, rounded corners] (boxA) {};
    \node[above=0.2cm of boxA] {Unobservable when $\eta_7>0$};
\end{tikzpicture}
    \caption{Illustration of intervening an eSCM with the DAG in Figure \ref{fig:DAG illu}. The new node $7$ represents the intervention node $d+1$ in Definition \ref{Def:interv}. The original edge $(2\to 3)$ is erased. When node $3$ is intervened to be a nonzero value through conditioning on a positive value of the activation variable $\eta_7$, the nodes $1$, $2$ and $5$ in the dashed box become unobservable and take values $0$.}\label{fig:DAG intervene illu}
\end{figure}

This result may seem counterintuitive, as one might not expect an intervention to affect non-descendants. However, we argue that this does not lead to a contradiction. 
For an eSCM, conditioning on a nonzero intervention value of \( Y_{d+1} \) does not exert a direct effect on non-descendants of \( V_0 \); instead, it renders them \emph{unobservable}. In fact, when the activation variable \( \eta_{d+1} \) associated with the intervention node is $0$, these non-descendants of \( V_0 \) and their structural relations may become observable again, although they no longer influence $V_0$. Furthermore, since we are primarily concerned with how the deterministic intervention propagates through the descendants of \( V_0 \), the unobservable non-descendants essentially do not matter in terms of inference on intervention effect. 
It is worth mentioning that a similar phenomenon has been observed in \cite{engelke2025extremes}, although their discovery was made through a limiting argument from an intervened probabilistic SCM  \cite[Theorem 2]{engelke2025extremes}, rather than by introducing an extra intervention variable.   

Note that the above discussion concerns a nonzero intervention value $\xi_v$.
If instead $\mathrm{do}(\xi_v=0)$, i.e., when the variable $v$ is forced to take a non-extremal value, this can be interpreted as conditioning on the event $\{\eta_{d+1}=0\}$, and additionally assuming $a_{v}^\star=0$. 
In this situation, positive extremal values of non-descendants become observable, since $\eta_u$ for $u\neq d+1$ is allowed to be nonzero on $\{\eta_{d+1}=0\}$. For the example in Figure~\ref{fig:DAG intervene illu}, under the intervention $\mathrm{do}(3)=0$, nodes $1,2,$ and $5$ are allowed to take nonzero values; moreover, nodes  $4$ and $6$ may also take positive values provided that their  activation coefficients are nonzero.

\subsection{Extremal causal Markov condition}\label{Sec:causal markov}

A causal structural model \eqref{eq:usual SCM} satisfies the causal Markov condition: a node is conditionally independent (in the usual probabilistic sense) of all its non-descendants given its parents; see, for example, \cite[Theorem 1.4.1]{pearl2009causality} and \cite[Theorem 6.3]{bongers2021foundations}.
This condition is stated locally (the directed local Markov property). As shown in \cite{lauritzen1990independence}, it can also be expressed globally (the directed global Markov property) using separation in moralized subgraphs or d-separation; see  \cite{lauritzen1996graphical} for more details.

The causal Markov condition is crucial for causal learning in SCMs (see, e.g., \cite{glymour2019review}).
Analogously, one may expect a causal Markov condition to hold for the eSCMs introduced in Definition \ref{def:eSCM}. However, since eSCMs are governed by infinite-mass laws (exponent measures), the conventional notion of probabilistic conditional independence does not apply. Nevertheless, we will show that a causal Markov property holds with respect to a recently defined notion of extremal conditional independence \cite{engelke2020graphical,engelke2025graphical}, which we briefly recall here.

For an exponent measure $\Lambda$ on $\bb{E}_V$, we define $\Lambda_I$ on $\bb{E}_{I}=[0,\infty)^I\setminus \{\mbf{0}\}$ by \begin{equation}\label{eq:Lambda I}
\Lambda_{I}(\cdot ):=\Lambda(\mbf{y}_I\in \cdot \, ,\ \mbf{y}_I\neq \mbf{0} ).
\end{equation} 
 Note that $\Lambda_I$ is an exponent measure on $\bb{E}_I$ satisfying \eqref{eq:Lambda homo} and \eqref{eq:Lambda marginal} (with obvious modification of indices).
The following definition is a special case of the  conditional independence formulated for more general infinite-mass measures in  \cite{engelke2025graphical}; see Definition 3.1,  Theorem 4.1 and Remark 4.2 therein.  

\begin{definition}\label{Def:e cond indep}
Let $\Lambda$ be an exponent measure on $\bb{E}_V$ satisfying \eqref{eq:Lambda homo} and \eqref{eq:Lambda marginal}.  
Suppose that $A$, $B$ and $C$ are disjoint subsets of $V=\{1,\ldots,d\}$. Assume first $A,B\neq \emptyset$ and set $ D=A\cup B\cup C$ and $\cl{R}_D^{(v)}=  \{\mbf{y}_D\in \bb{E}_D:\ y_v\ge 1\} $, $v\in D$.  
Let $\mbf{Y}^{(v)}$ denote a random vector that takes the value in $\cl{R}_D^{(v)}$ whose  probability distribution is given by $\Lambda_D\left(\cdot \, \cap \cl{R}^{(v)}_D\right)/\Lambda_D\left(\cl{R}^{(v)}_D\right)$. 

Then $A,B$ are \emph{extremally conditionally independent} given $C$, denoted as $A\perp B\mid C\, [\Lambda]$, if the probabilistic conditional independence $\mbf{Y}_A^{(v)}\perp\mbf{Y}_{B}^{(v)}\mid \mbf{Y}_C^{(v)}$ holds for all $v\in D$. 
Furthermore,  the case $C=\emptyset$ is understood as probablistic independence $\mbf{Y}_A^{(v)}\perp\mbf{Y}_{B}^{(v)}$, $v\in A\cup B$, which may alternatively be denoted as $A\perp B \, [\Lambda]$.  In addition, the relation $A\perp B\mid C\, [\Lambda]$ is understood to hold trivially whenever $A$ or $B=\emptyset$. 
\end{definition}

 \begin{remark}\label{Rem:e cond indep}
In contrast to the punctured spaces $\bb{E}_D$,    the rectangular shape of the  test  subspaces $\cl{R}_D^{(v)}$  ensures that one can work with product measures, which is indispensable for describing the probabilistic conditional independence relation. The extremal conditional independence above can also be described by different   test  rectangular subspaces different from $\cl{R}_D^{(v)}$; see \cite[Definition 3.1 and Section 4.1]{engelke2025graphical}.

In addition, with the same notation as above,  $A \perp B \mid C [\Lambda] $ is equivalent to $A \perp B \mid C [\Lambda_{D}] $ with $D=A\cup B\cup C$   \cite[relation (11)]{engelke2025graphical}, and hence one may assume without loss of generality that $A,B,C$ forms a partition of $V$. This aligns with the idea that a conditional independence relation among nodes in $A\cup B\cup C$ should remain unaffected by nodes outside this set.
Furthermore, the unconditional extremal independence $A\perp B \, [\Lambda]$ can be characterized by $\Lambda(\{\mbf{y}\in \bb{E}_V:\  \mbf{y}_A\neq \mbf{0}_A \text{ and }  \mbf{y}_B\neq \mbf{0}_B\})=0$ \cite[Proposition 5.1]{engelke2025graphical}.
\end{remark}

In \cite{engelke2025graphical}, it has been shown that the extremal conditional independence relation defined above satisfies the so-called semi-graphoid axiom, which further ensures the aforementioned equivalence between the directed local and global Markov properties \cite[Corollary 7.2]{engelke2025graphical}.  In the following, we shall simply use \emph{extremal causal Markov property} to refer to the two equivalent Markov properties with respect to the extremal conditional independence relation described in Definition \ref{Def:e cond indep}. 

\begin{theorem}\label{Thm:e causal Markov}
Suppose $\Lambda=\cl{L}(\mbf{Y})$ is the law of an eSCM  $\mbf{Y}$  associated with the DAG $\cl{G}$ as  in Definition \ref{def:eSCM}. Then $\Lambda$ satisfies   the extremal causal Markov property with respect to $\cl{G}$, that is,  
\begin{equation}\label{eq:local markov}
\{v\}\perp  \pp{\nd(v)\setminus  \pa(v)}  \mid \pa(v) [\Lambda], \quad  v\in V.
\end{equation} 
\end{theorem}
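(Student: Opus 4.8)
The plan is to prove the local Markov property \eqref{eq:local markov} directly from the decomposition \eqref{eq:mix act} and the characterization of extremal conditional independence via the rescaled conditional laws $\mbf{Y}^{(w)}$ in Definition \ref{Def:e cond indep}. Fix $v\in V$, write $A=\{v\}$, $B=\nd(v)\setminus\pa(v)$, $C=\pa(v)$, and set $D=A\cup B\cup C=\nd(v)\cup\{v\}=\De(v)^c\cup\{v\}$ (up to the obvious identification, $D$ consists of $v$ together with all non-descendants of $v$). By Remark \ref{Rem:e cond indep} it suffices to work with $\Lambda_D$, so I would first restrict attention to the coordinates in $D$. The key structural input is that under the single-activation mechanism, a point in the support of $\Lambda_D$ arises from exactly one active node $u$, and by \eqref{eq:Y=F(eta,theta)}--\eqref{eq:single eta} only the coordinates in $\De(u)$ can be nonzero; intersecting with $D$, the relevant active nodes $u$ are those with $\De(u)\cap D\neq\emptyset$.

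The main step is to verify, for each test index $w\in D$, the probabilistic conditional independence $\mbf{Y}_A^{(w)}\perp \mbf{Y}_B^{(w)}\mid \mbf{Y}_C^{(w)}$, where $\mbf{Y}^{(w)}$ has law $\Lambda_D(\,\cdot\cap\cl{R}_D^{(w)})/\Lambda_D(\cl{R}_D^{(w)})$. I would split into two cases according to whether $w\in\De(v)$ or not — but since $D$ excludes the proper descendants of $v$, the only element of $D$ in $\De(v)$ is $v$ itself. \textbf{Case $w=v$:} conditioning on $\{Y_v\ge 1\}$ forces the active node $u$ to be an ancestor of $v$ (i.e. $u\in\An(v)$), because only activations at ancestors of $v$ can make $Y_v$ positive (cf.\ \eqref{eq:single eta} and the discussion below \eqref{eq:mix act}); on this event $\mbf{Y}_D$ is a homogeneous function of $(\eta_u,\theta_{\An(v)\cap D})$, and crucially, by the recursion \eqref{eq:Y=F(eta,theta)}, the coordinate $Y_v=f_v(\mbf{Y}_{\pa(v)},\eta_v,\theta_v)$ depends on the remaining randomness only through $(\mbf{Y}_{\pa(v)},\eta_v,\theta_v)=(\mbf{Y}_C,\eta_v,\theta_v)$, while $\mbf{Y}_B$ (the other non-descendants and non-parents) is a function of $\sbf{\theta}$-components and $\eta$-components disjoint from $(\eta_v,\theta_v)$ once $\mbf{Y}_C$ is fixed — here I use that $e_v:=(\eta_v,\theta_v)$ is independent of everything feeding into $\mbf{Y}_{\nd(v)}$, exactly as in the classical SCM argument. \textbf{Case $w\in B$:} conditioning on $\{Y_w\ge 1\}$ with $w$ a non-descendant of $v$; I would then observe that on this event the active node lies in $\An(w)\subseteq\nd(v)$, which forces $Y_v=0$ $\Lambda_D$-a.e.\ on $\cl{R}_D^{(w)}$ (since $v\notin\De(u)$ for $u\in\An(w)$ when $w$ is not a descendant of $v$ and $v\notin\An(w)$), so $\mbf{Y}_A^{(w)}$ is a.s.\ constant and the conditional independence is automatic. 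Assembling the two cases over all $w\in D$ gives $A\perp B\mid C\,[\Lambda_D]$, hence $[\Lambda]$.

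To make the Case $w=v$ argument rigorous I would introduce, on the event $\{Y_v\ge 1\}$ decomposed over the active ancestor $u\in\An(v)$, the explicit representation $\mbf{Y}_D = \bigl(\mbf{Y}_{\nd(v)},\, f_v(\mbf{Y}_{\pa(v)},\eta_v,\theta_v)\bigr)$ and note that $\mbf{Y}_{\nd(v)}=\mbf{F}_{\cl{G}}(\sbf\eta,\sbf\theta)$ restricted to non-descendant coordinates is a measurable function $G$ of $\bigl(\eta_u,(\theta_s)_{s\neq v},(\eta_s)_{s\neq v}\bigr)$ not involving $(\eta_v,\theta_v)$ — because no non-descendant of $v$ can have $v$ as an ancestor. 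Then $\mbf{Y}_B$ is a function of $\mbf{Y}_{\nd(v)}\supseteq\mbf{Y}_C$, hence factorizing the regular conditional law given $\{Y_v\ge 1\}$ with respect to the product structure $(\eta_v,\theta_v)\perp \bigl(\text{rest}\bigr)$ yields that, conditionally on $\mbf{Y}_C$, $\mbf{Y}_A$ (a function of $\mbf{Y}_C$ and $(\eta_v,\theta_v)$) is independent of $\mbf{Y}_B$ (a function of $\mbf{Y}_C$ and the rest). The one technical subtlety — and the step I expect to be the main obstacle — is the conditioning/normalization on $\cl{R}_D^{(w)}=\{y_w\ge 1\}$: one must check that restricting and renormalizing $\Lambda_D$ to this rectangular region (which has finite $\Lambda_D$-mass by \eqref{eq:Lambda marginal}) does not destroy the product/measurability structure, i.e.\ that the event $\{Y_w\ge 1\}$ is itself measurable with respect to the $\sigma$-field that excludes $(\eta_v,\theta_v)$ when $w\neq v$, and with respect to $\sigma(\mbf{Y}_C,\eta_v,\theta_v)$ when $w=v$; both follow from \eqref{eq:Y=F(eta,theta)} and the descendant/non-descendant bookkeeping, but they must be tracked carefully because the single-activation structure makes the laws singular and one cannot simply appeal to density factorization. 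Finally I would invoke the semi-graphoid property (from \cite{engelke2025graphical}, recalled before the theorem) only if needed to pass between local and global forms; the statement as given is local, so the above suffices.
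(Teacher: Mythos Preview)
Your overall strategy---checking Definition \ref{Def:e cond indep} directly for each test index $w\in D$---is viable, and your sketch for $w=v$ is along the right lines. However, the argument you give for $w\in B$ is wrong (and the case $w\in C=\pa(v)$ is either missing or has been absorbed into it under an incorrect label). The claim ``on $\{Y_w\ge 1\}$ the active node lies in $\An(w)\subseteq\nd(v)$, which forces $Y_v=0$'' fails: the inclusion $\An(w)\subseteq\nd(v)$ is correct, but it does \emph{not} imply $v\notin\De(u)$ for the active $u\in\An(w)$. Take the DAG $1\to 2$, $1\to 3$ with $v=2$ and $w=3\in B$; then $\An(3)=\{1,3\}\subseteq\nd(2)$, yet if $u=1$ is active we have $Y_2=h_2(Y_1,\theta_2)$, which is generically positive. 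So $Y_v$ is not degenerate on $\cl{R}_D^{(w)}$ and the conditional independence cannot be dismissed as trivial.

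The correct argument for any $w\neq v$ in $D$ (i.e.\ $w\in B\cup C$) is rather this: since $w\in\nd(v)$ we have $v\notin\An(w)$, so on $\{Y_w\ge 1\}$ necessarily $\eta_v=0$, whence $Y_v=h_v(\mbf{Y}_{\pa(v)},\theta_v)$. Because $\{Y_w\ge 1\}\in\sigma\bigl((\eta_s,\theta_s)_{s\in\An(w)}\bigr)$ does not involve $\theta_v$, the randomizer $\theta_v$ remains independent of $(\sbf\eta_{V\setminus\{v\}},\sbf\theta_{V\setminus\{v\}})$ under $\mu(\cdot\mid Y_w\ge 1)$, and the factorization $Y_v=h_v(\mbf{Y}_C,\theta_v)$ then yields $Y_v\perp\mbf{Y}_B\mid\mbf{Y}_C$ exactly as in the classical SCM argument. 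This is in fact what the paper does, but only for $w\in C$: it invokes an alternative characterization (Proposition \ref{Pro:alt e cond indep}, from \cite{engelke2025graphical}) that reduces the check to (i) $w\in C$ plus (ii) a boundary condition $A\perp B\,[\Lambda^0_{A\cup B}]$. Condition (ii) is the easy part---it amounts to $\mu(\mbf{Y}_{\pa(v)}=\mbf{0},\,Y_v>0,\,Y_u>0)=0$ for $u\in B$---and absorbs precisely the degenerate sub-case of your $w=v$ analysis where $\mbf{Y}_C=\mbf{0}$. Using this shortcut spares the delicate conditioning on $\{Y_v\ge 1\}$ that you correctly flagged as the main technical obstacle. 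Your direct route can be completed once the $w\neq v$ cases are repaired as above, but it is more laborious.
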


In fact, the following converse of Theorem~\ref{Thm:e causal Markov} also holds.
\begin{theorem}\label{Thm:Lambda expr by eSCM}
Suppose $\Lambda$ is an arbitrary exponent measure on $\bb{E}_V$ satisfying \eqref{eq:Lambda homo} and \eqref{eq:Lambda marginal}, which obeys the extremal causal Markov property {\eqref{eq:local markov}}, with respect to a DAG $\cl{G}$. 
Then there exists an eSCM $\mbf{Y}$ as in Definition \ref{def:eSCM} associated with   $\cl{G}$   such that $\cl{L}(\mbf{Y})=\Lambda$.   
\end{theorem}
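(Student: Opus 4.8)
The plan is to reverse-engineer the ingredients $(a_v,h_v)_{v\in V}$ of an eSCM directly from $\Lambda$, working along a topological order of $\cl{G}$. This parallels the classical argument that a probability law obeying the directed Markov property admits a recursive SCM representation with i.i.d.\ uniform noises (obtained by inverting conditional CDFs), but with two new features: the \emph{activation part} $a_v\eta_v$ must be split off from the \emph{proper part} $h_v(\mbf{Y}_{\pa(v)},\theta_v)$, and, since $\Lambda$ is an infinite measure, all conditioning has to be routed through the rectangle-conditioned probability measures $\mbf{Y}^{(w)}$ of Definition~\ref{Def:e cond indep} rather than through naive conditional laws.

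\textbf{Activation decomposition.} First I would write $\Lambda=\sum_{v\in V}\Lambda_v$, where $\Lambda_v$ is the restriction of $\Lambda$ to the points $\mbf{y}$ whose support $R(\mbf{y}):=\{w:y_w>0\}$ has $v$ as its (necessarily unique) $\cl{G}$-maximal element, i.e.\ $v\in R(\mbf{y})\subseteq\De(v)$. Using the global form of the extremal causal Markov property --- available because the extremal conditional independence relation satisfies the semi-graphoid axiom \citep{engelke2025graphical} --- I would show that $\Lambda$ is concentrated on $\bigcup_{v\in V}\cl{F}_v$ with $\cl{F}_v:=\{\mbf{y}\in\bb{E}_V:\ y_v>0,\ \mbf{y}_{\nd(v)}=\mbf{0}_{\nd(v)}\}$; this is the statement that Markovianity forbids two simultaneously ``active sources'', and it makes the decomposition exhaustive with $\Lambda_v$ supported on $\cl{F}_v$. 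Identifying $\Lambda_v$ with an exponent measure $\wt\Lambda_v$ on $\bb{E}_{\De(v)}$, one checks from the Markov property (using relation~(11) of \cite{engelke2025graphical} and compatibility of extremal conditional independence with restriction to a coordinate face) that $\wt\Lambda_v$ obeys the extremal causal Markov property with respect to the descendant sub-DAG $\cl{D}(v)$, which has $v$ as its unique root. Set $a_v:=\bigl(\wt\Lambda_v(y_v>1)/s\bigr)^{1/\alpha}$; the marginal condition \eqref{eq:Lambda marginal} together with the discussion following \eqref{eq:mix act} forces $a_v>0$ when $\pa(v)=\emptyset$, while $a_v=0$ is allowed only when $\wt\Lambda_v\equiv 0$.

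\textbf{Extracting the structural functions.} For each node $u$, I would extract from $\Lambda$ a scale-equivariant Markov kernel $\kappa_u(\mbf{y}_{\pa(u)},\cdot)$ playing the role of the conditional law of $Y_u$ given $\mbf{Y}_{\pa(u)}$: restrict $\Lambda$ to the finite-mass region $\{\|\mbf{y}_{\pa(u)}\|>1\}$, take the regular conditional distribution of the $u$-coordinate given $\mbf{y}_{\pa(u)}$, and extend it to all $\mbf{y}_{\pa(u)}\neq\mbf{0}$ by the homogeneity \eqref{eq:Lambda homo}, so that $\kappa_u(c\,\mbf{y}_{\pa(u)},c\,\cdot)=\kappa_u(\mbf{y}_{\pa(u)},\cdot)$. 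The crucial point is that $\kappa_u$ is consistent across the activation pieces: for every $v\in\an(u)$, the conditional law of the $u$-coordinate given $\mbf{Y}_{\pa(u)}$ under $\wt\Lambda_v$ equals $\kappa_u$. Indeed, for $v\in\an(u)$ the event $\{\text{active source}=v\}$ is measurable with respect to $\mbf{Y}_{\nd(v)\cup\{v\}}$, hence with respect to $\mbf{Y}_{\nd(u)}$ since $\nd(v)\cup\{v\}\subseteq\nd(u)$; so the local Markov relation $\{u\}\perp(\nd(u)\setminus\pa(u))\mid\pa(u)\,[\Lambda]$ --- which gives $Y_u\perp\mbf{Y}_{\nd(u)}\mid\mbf{Y}_{\pa(u)}$, interpreted through the rectangle-conditioned probability measures --- implies that conditioning further on $\{\text{active source}=v\}$ does not change the conditional law of $Y_u$ given $\mbf{Y}_{\pa(u)}$. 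Having fixed $\kappa_u$, define $h_u(\mbf{y}_{\pa(u)},\theta):=\inf\{t\ge 0:\ \kappa_u(\mbf{y}_{\pa(u)},[0,t])\ge\theta\}$ for $\mbf{y}_{\pa(u)}\neq\mbf{0}$ and $h_u(\mbf{0},\theta):=0$; this is jointly measurable, and scale-equivariance of $\kappa_u$ yields Condition~1 of Definition~\ref{def:eSCM}. With $(\theta_v)_{v\in V}$ i.i.d.\ $\mathrm{Uniform}(0,1)$ and $\sbf{\eta}$ the activation variables as in Definition~\ref{def:eSCM}, set $Y_v=a_v\eta_v+h_v(\mbf{Y}_{\pa(v)},\theta_v)$ recursively in topological order. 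An induction along this order, using the $a_v$'s, the $\kappa_u$'s, and the decomposition \eqref{eq:mix act}, shows $\cl{L}(\mbf{Y})$ restricted to $\cl{F}_v$ equals $\Lambda_v$ for each $v$, hence $\cl{L}(\mbf{Y})=\sum_v\Lambda_v=\Lambda$; Conditions 1--2 of Definition~\ref{def:eSCM} follow from \eqref{eq:Lambda marginal} and Proposition~\ref{Pro:basic}.

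\textbf{Main obstacle.} The heart of the argument is the consistency of the kernels $\kappa_u$ across activation pieces, together with the very definition of $\kappa_u$ from the infinite measure $\Lambda$: one must phrase the conditioning through the rectangle-conditioned probability measures of Definition~\ref{Def:e cond indep}, check that the extracted kernels are independent of which rectangle $\cl{R}_D^{(w)}$ or truncation level is used, and choose the quantile functions measurably and simultaneously for all rescalings. A secondary, more structural obstacle is the concentration claim that a Markov $\Lambda$ charges only $\bigcup_v\cl{F}_v$ --- i.e.\ that no two incomparable nodes can be ``co-sources'' of the support --- which amounts to identifying, for a putative point outside this union, two incomparable nodes whose joint positivity (with all their ancestors vanishing) violates an extremal conditional independence relation.
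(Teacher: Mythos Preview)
Your approach is correct in outline but takes a genuinely different route from the paper's.  The paper proceeds by induction on $|V|$: it removes a leaf node $d{+}1$, invokes the induction hypothesis to represent the marginal $\Lambda_V$ by an eSCM on the smaller DAG, and then constructs the single missing structural function $f_{d+1}$ by (i) reading off the activation coefficient from $r_{d+1}:=\Lambda(y_{d+1}>1,\ \mbf{y}_{\pa(d+1)}=\mbf{0})$ --- which the Markov property, via Proposition~\ref{Pro:alt e cond indep}(ii), shows equals $\Lambda(y_{d+1}>1,\ \mbf{y}_{V}=\mbf{0})$ --- and (ii) applying the noise-outsourcing lemma to the angular conditional law $\sigma$ of $y_{d+1}/\|\mbf{y}_{\pa(d+1)}\|_\infty$ given the direction $\mbf{y}_{\pa(d+1)}/\|\mbf{y}_{\pa(d+1)}\|_\infty$, producing the map $g$ in \eqref{eq:f d+1 def gen}.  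The verification $\cl{L}(\mbf{Y}_{V_+})=\Lambda_{V_+}$ is then split into the two pieces $\{\mbf{y}_{\pa(d+1)}=\mbf{0}\}$ and $\{\mbf{y}_{\pa(d+1)}\neq\mbf{0}\}$, with Proposition~\ref{Pro:alt e cond indep} handling the former and the spectral factorization the latter.

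Your global route --- decomposing $\Lambda=\sum_v\Lambda_v$ by the ``activation source'' $v$ at once and extracting all kernels $\kappa_u$ simultaneously --- is more structural and makes the eventual decomposition \eqref{eq:mix act} visible from the start, but it forces you to prove the concentration statement $\Lambda\bigl(\bb{E}_V\setminus\bigcup_v\cl{F}_v\bigr)=0$ up front.  This is indeed obtainable: for incomparable $u_1,u_2$ one checks that $\an(u_1)\cup\an(u_2)$ d-separates them (work in the moral graph of the ancestral set $\An(u_1)\cup\An(u_2)$, where $u_1,u_2$ are leaves), and then Proposition~\ref{Pro:alt e cond indep}(ii) yields $\Lambda(y_{u_1}>0,\,y_{u_2}>0,\,\mbf{y}_{\an(u_1)\cup\an(u_2)}=\mbf{0})=0$.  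The paper's inductive organization simply sidesteps this lemma: by always adding a leaf, it never needs to argue about simultaneous incomparable sources, and the only Markov input per step is the single relation \eqref{eq:d+1 cond indep}.  Conversely, your scheme buys a cleaner identification of $a_v$ for \emph{all} nodes at once (not just leaves) and a more transparent consistency argument for the kernels via the observation that $\{\text{source}=v\}$ is $\mbf{Y}_{\nd(u)}$-measurable whenever $v\in\an(u)$.  Both approaches ultimately rely on the same machinery --- Proposition~\ref{Pro:alt e cond indep} and a noise-outsourcing/quantile inversion --- just sequenced differently.
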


Here we emphasize that no additional assumptions are imposed on $\Lambda$ beyond the basic conditions \eqref{eq:Lambda homo} and \eqref{eq:Lambda marginal}, suggesting that {both theorems}  apply not only when $\Lambda$ is absolutely continuous with respect to the Lebesgue measure (thus admitting a density) but also when $\Lambda$ is singular, e.g., when $\Lambda$ is supported on a finite number of rays in $\bb{E}_V$. Consequently, the class of eSCM models described in Definition \ref{def:eSCM} is sufficiently broad to accommodate any law $\Lambda$ that satisfies the extremal causal Markov property.  

Theorems \ref{Thm:e causal Markov} and \ref{Thm:Lambda expr by eSCM} also entail that from the perspective of an exponent measure $\Lambda$,  directed graphical models (or a Bayesian network; see \cite{lauritzen1996graphical}) formulated based on extremal conditional independence (Definition \ref{Def:e cond indep}) and  eSCMs  (Definition \ref{def:eSCM}) are  equivalent.      
We mention an immediate consequence of Theorem \ref{Thm:Lambda expr by eSCM} in the following.
\begin{corollary}\label{cor:equiv}
Suppose $\Lambda$ is an arbitrary exponent measure on $\bb{E}_V$ satisfying \eqref{eq:Lambda homo} and \eqref{eq:Lambda marginal}. Then there exists an eSCM $\mbf{Y}$ as in Definition \ref{def:eSCM} associated with a suitable DAG $\cl{G}$  such that $\cl{L}(\mbf{Y})=\Lambda$.  
\end{corollary}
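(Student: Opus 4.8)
The plan is to deduce the corollary immediately from Theorem~\ref{Thm:Lambda expr by eSCM} by exhibiting a single DAG with respect to which \emph{every} exponent measure satisfies the extremal causal Markov property. The natural candidate is the complete (tournament) DAG: fix the ordering $1<2<\cdots<d$ on $V$ and let $\cl{G}_{\mrm{full}}=(V,E)$ have edge set $E=\{u\rightarrow v:\ u<v\}$, which is acyclic.

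First I would record the combinatorial fact that for $\cl{G}_{\mrm{full}}$ one has $\pa(v)=\{1,\ldots,v-1\}=\nd(v)$ for every $v\in V$, so that $\nd(v)\setminus\pa(v)=\emptyset$. Consequently the local Markov statement \eqref{eq:local markov} reads $\{v\}\perp\emptyset\mid\pa(v)\,[\Lambda]$ for each $v$, which holds trivially for any exponent measure by the convention in Definition~\ref{Def:e cond indep} (the relation $A\perp B\mid C\,[\Lambda]$ holds trivially whenever $A$ or $B$ is empty). Hence any $\Lambda$ satisfying \eqref{eq:Lambda homo} and \eqref{eq:Lambda marginal} obeys the extremal causal Markov property with respect to $\cl{G}_{\mrm{full}}$.

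Second, I would apply Theorem~\ref{Thm:Lambda expr by eSCM} with $\cl{G}=\cl{G}_{\mrm{full}}$: its hypotheses are exactly \eqref{eq:Lambda homo}, \eqref{eq:Lambda marginal}, and the extremal causal Markov property with respect to the given DAG, all of which we have just verified. The theorem then yields an eSCM $\mbf{Y}$ as in Definition~\ref{def:eSCM} associated with $\cl{G}_{\mrm{full}}$ such that $\cl{L}(\mbf{Y})=\Lambda$, which is the desired conclusion.

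There is essentially no obstacle here; the entire content lies in Theorem~\ref{Thm:Lambda expr by eSCM}. The only point worth flagging is that the phrase ``a suitable DAG'' in the statement is permitted to depend on $\Lambda$, whereas in fact the complete DAG works uniformly for all $\Lambda$ (it imposes no conditional-independence constraints), which is precisely why no hypotheses on $\Lambda$ beyond \eqref{eq:Lambda homo} and \eqref{eq:Lambda marginal} are needed. If a more parsimonious representation is desired, one could alternatively take $\cl{G}$ to be a minimal DAG encoding the extremal conditional independences of $\Lambda$ under the fixed ordering (in analogy with the Bayesian-network construction for probability measures), but the complete DAG already suffices for the stated corollary.
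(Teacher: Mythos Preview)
Your proposal is correct and is exactly the argument the paper gives: apply Theorem~\ref{Thm:Lambda expr by eSCM} to the complete DAG $E=\{(u,v)\in V^2:\ u<v\}$, for which $\nd(v)\setminus\pa(v)=\emptyset$ so that the local Markov relation \eqref{eq:local markov} is vacuous by the convention in Definition~\ref{Def:e cond indep}. You have simply spelled out the details of what the paper states in one sentence.
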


Corollary~\ref{cor:equiv} follows from Theorem \ref{Thm:Lambda expr by eSCM} by considering a DAG $\cl{G}=(V,E)$ for which any pair of nodes is connected by a directed edge, e.g.,  $E=\{(u,v)\in V^2: u<v\}$.  Such a $\cl{G}$ does not impose any nontrivial causal Markov restriction on $\Lambda$ so that any extremal law $\Lambda$ can be fit by an eSCM in theory.  {Results analogous to Theorem \ref{Thm:Lambda expr by eSCM} and Corollary \ref{cor:equiv}} for standard probablistic SCMs can be found in Proposition 7.1 of \cite{peters2017elements}.

As noted above, the causal Markov condition plays a central role in the statistical learning of the underlying causal DAG. When combined with the faithfulness assumption (i.e., the joint distribution exhibits no conditional independence relations beyond those implied by the DAG), one can use observed conditional independence relations to infer structural features of the causal model and, in some cases, recover the DAG itself. This principle underlies constraint-based causal discovery methods such as the popular PC algorithm \citep{spirtes2000causation}.

In the setting of causal analysis for extremes, recent studies \cite{jiang2025separation,engelke2025extremes,adams2025inference} have investigated this problem within certain probabilistic SCMs, which can be reformulated as specific parametric families of eSCMs (see Example~\ref{Eg:HR eSCM}). Extending such analyses beyond parametric models for extremes is of considerable interest. This parallels developments in the broader SCM literature (cf. \cite{zhang2012kernel}). A key challenge is the design of nonparametric statistical tests or decision rules for extremal conditional independence introduced by \cite{engelke2025extremes}. Developing such tools represents a promising direction for future research.

While conditional independence plays a central role in characterizing causal structures, it does not in general determine causal directions. For example, the three DAGs $1\rightarrow 2\rightarrow 3$, $1\leftarrow 2\rightarrow 3$, and $1\leftarrow 2\leftarrow 3$ all entail the same conditional independence relation $\{1\}\perp \{3\}\mid \{2\}$. In the next section, we discuss assumptions and statistical approaches for identifying causal direction in the framework of eSCMs.

\section{Extremal causal asymmetry and causal direction learning}\label{Sec:causal asym}

\subsection{Extremal causal asymmetry}\label{Sec:causal asym discussion}

For probabilistic SCM \eqref{eq:usual SCM}, it is well-known that distinguishing cause and effect based on the statistical law of $\mbf{X}=\pp{X}_{v\in V}$ is impossible
unless more detailed assumptions are made. 
For instance, Chapter 4 of \cite{peters2017elements} gives a survey of assumptions on the structural function $f_v$ and noise $e_v$ that ensure the identifiability.
In general, the same comment applies to the eSCMs in Definition \ref{def:eSCM}.


Now we impose some interpretable assumptions to guarantee the identifiability of cause and effect.
 Given the extremal variables $\mbf{Y}$ as defined in Definition \ref{def:eSCM} with law $\cl{L}(\mbf{Y})=\Lambda$, for a non-empty subset of nodes $I\subset V=\{1,\ldots,d\}$, the $I$-marginal law $\cl{L}(\mbf{Y}_I)$ refers to  $\Lambda_I$ in \eqref{eq:Lambda I}.

\begin{assumption}(Nonzero Activation.)\label{ass:eta act}
The activation coefficient $a_v>0$  for any $v\in V$ in \eqref{eq:general eSCM}. 
\end{assumption}


\begin{assumption}(Nonzero Parent Effect.) \label{ass:weak asym}
  For any $v\in V$ satisfying $\mrm{pa}(v)\neq\emptyset$,  with the proper structural function $h_v$ in \eqref{eq:general eSCM},  we require $\mu\pp{ h_v \pp{  \mbf{Y}_{\mrm{pa}(v)} , \theta_v}=0, \mbf{Y}_{\mrm{pa}(v)}\neq \mbf{0}_{\pa(v)} }=0$.
\end{assumption}

Assumption~\ref{ass:eta act} suggests that any extremal variable has an intrinsic activation randomness, so one variable may become extremal (i.e., nonzero) even though its parent variables are not. 
Meanwhile, Assumption~\ref{ass:weak asym} specifies a causal minimality-type condition (see, e.g., \cite[Section 6.5.2]{peters2017elements}): Once a parent extremal variable is nonzero, it always generates a nonzero effect on its descendants.

Given Assumptions~\ref{ass:eta act} and \ref{ass:weak asym}, the result below describes the  pairwise causal asymmetry induced.

\begin{proposition}\label{Pro:causal asym}
   Consider an eSCM as in Definition \ref{def:eSCM} with law $\Lambda=\cl{L}(\mbf{Y})$. Let $\Lambda_{\{u,v\}}$ be the marginal law  as in \eqref{eq:Lambda I} with $I=\{u,v\}$, and distinct $u,v\in V$.  Then     
   Assumption \ref{ass:eta act}  implies
   $\Lambda_{\pc{u,v}}(y_u>0,y_v=0)=\mu(Y_u>0,Y_v=0)>0$ when $u\notin \an(v)$, $v\in V$ (i.e., when $u$ does not cause $v$). 
   Also, Assumption \ref{ass:weak asym} gives $\Lambda_{\pc{u,v}}(y_u>0, y_v=0)=\mu(Y_u>0,Y_v=0)=0$ when $u\in \mrm{an}(v)$, $v\in V$ (i.e., when $u$  causes $v$). 
\end{proposition}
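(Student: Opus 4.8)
The statement has two parts, and I would handle them separately, both relying on the activation decomposition \eqref{eq:mix act} together with the structural form \eqref{eq:general eSCM} and the single-activation representation \eqref{eq:single eta}. For the first part (the case $u\notin\an(v)$), the plan is to exhibit a region of positive $\Lambda$-mass on which $Y_u>0$ while $Y_v=0$. The natural candidate is the event $\{\eta_u>0\}$. Under Assumption~\ref{ass:eta act} we have $a_u>0$, so on $\{\eta_u>0\}$ equation \eqref{eq:single eta} (applied at node $u$) gives $Y_u=a_u\eta_u>0$; moreover $\eta_w=0$ for all $w\neq u$, so by the recursion \eqref{eq:Y=F(eta,theta)} every coordinate $Y_w$ with $w\notin\De(u)$ vanishes, i.e.\ $\mbf{Y}_{\nd(u)}=\mbf{0}_{\nd(u)}$. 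Since $u\notin\an(v)$ means $v\notin\de(u)$, and also $v\neq u$, we get $v\in\nd(u)$ and hence $Y_v=0$ on this event. Finally $\Lambda$ restricted to $\{\eta_u>0\}$ is exactly the component $\Lambda_u$ in \eqref{eq:mix act}, which is a nonzero measure precisely because $a_u>0$ and Condition~2 of Definition~\ref{def:eSCM} forces $\mu(Y_u>1)=s_u\in(0,\infty)$; by homogeneity \eqref{eq:Lambda homo} this yields $\Lambda_u(y_u>0)>0$, so $\Lambda_{\{u,v\}}(y_u>0,y_v=0)\ge\Lambda_u(y_u>0,y_v=0)=\Lambda_u(y_u>0)>0$. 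The identification with $\mu(Y_u>0,Y_v=0)$ is just the definition of the pushforward law $\cl{L}(\mbf{Y})$ and of the marginal $\Lambda_{\{u,v\}}$ in \eqref{eq:Lambda I}.

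For the second part (the case $u\in\an(v)$), I would argue that on every activation event $\{\eta_w>0\}$, $w\in V$, the set $\{Y_u>0,\,Y_v=0\}$ is $\mu$-null, and then sum over $w$ using \eqref{eq:mix act}. If $w\notin\An(v)$ then $v\notin\De(w)$, so on $\{\eta_w>0\}$ we have $Y_v$ possibly zero but also — since $u\in\an(v)\subseteq\nd(w)$ when $w\notin\An(v)\supseteq\{u\}$... here one must be slightly careful: if $w\notin\An(v)$ then in particular $w$ is not an ancestor of $v$, but $u$ \emph{is} an ancestor of $v$, and one checks that $w\notin\An(v)$ forces $w\notin\An(u)\cup\{u\}$ is not automatic. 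The cleaner route: on $\{\eta_w>0\}$ with $w\neq u$ and $u\notin\De(w)$ we get $Y_u=0$, so $\{Y_u>0\}$ is empty there; the only remaining activation events to examine are $w=u$ and $w$ with $u\in\de(w)$. On $\{\eta_w>0\}$ with $u\in\De(w)$ and $Y_u>0$, I would trace the directed path $w\to\cdots\to u\to\cdots\to v$ (which exists since $u\in\an(v)$) and apply Assumption~\ref{ass:weak asym} inductively along each edge of the sub-path from $u$ to $v$: at each node $t$ on that path with parent $t'$ also on the path, $Y_{t'}>0$ together with $Y_{t'}\in\mbf{Y}_{\pa(t)}$ gives $\mbf{Y}_{\pa(t)}\neq\mbf{0}$, and Assumption~\ref{ass:weak asym} states $\mu(h_t(\mbf{Y}_{\pa(t)},\theta_t)=0,\mbf{Y}_{\pa(t)}\neq\mbf{0})=0$, so up to a $\mu$-null set $Y_t=a_t\eta_t+h_t(\cdots)=h_t(\cdots)>0$ (the $\eta_t$ term is zero since $\eta_w$ is the only active one and $t\neq w$). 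Propagating from $u$ to $v$ shows $Y_v>0$ $\mu$-a.e.\ on $\{\eta_w>0,\,Y_u>0\}$, hence $\{Y_u>0,Y_v=0\}$ is $\mu$-null on each $\{\eta_w>0\}$; summing over $w\in V$ via \eqref{eq:mix act} gives $\Lambda_{\{u,v\}}(y_u>0,y_v=0)=\mu(Y_u>0,Y_v=0)=0$.

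The main obstacle I anticipate is the bookkeeping in the second part: making the inductive propagation along the directed path from $u$ to $v$ rigorous, in particular handling the $\mu$-null exceptional sets at each edge so that their finite union remains null, and correctly invoking the homogeneity/zero-preservation property $F_{\cl{A}(t)}(\mbf{0},\cdot)\equiv 0$ to ensure that a node off the active descendant cone genuinely contributes zero. A secondary point to state carefully is that Assumption~\ref{ass:weak asym} is phrased in terms of $\mbf{Y}_{\pa(v)}\neq\mbf{0}_{\pa(v)}$, so one only needs \emph{some} parent of each path-node to be nonzero, which is guaranteed by the predecessor on the path; this keeps the induction clean. Everything else is a direct unpacking of Definition~\ref{def:eSCM}, the decomposition \eqref{eq:mix act}, and the definitions \eqref{eq:Lambda I} of the marginal exponent measure.
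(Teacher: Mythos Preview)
Your proposal is correct and follows the paper's approach: the first claim is handled exactly via the event $\{\eta_u>0\}$ (on which $Y_u=a_u\eta_u>0$ and $Y_v=0$ since $v\in\nd(u)$), and the second claim propagates Assumption~\ref{ass:weak asym} along a directed path $u=u_0\to u_1\to\cdots\to u_s=v$. For the second part the paper is more economical than your plan --- it never decomposes by activation events but simply notes that $\mu(Y_{u_i}>0,\,Y_{u_{i+1}}=0)\le\mu\bigl(\mbf{Y}_{\pa(u_{i+1})}\neq\mbf{0}_{\pa(u_{i+1})},\ h_{u_{i+1}}(\mbf{Y}_{\pa(u_{i+1})},\theta_{u_{i+1}})=0\bigr)=0$ for each edge and then applies the union bound $\{Y_u>0,\,Y_v=0\}\subseteq\bigcup_{i=0}^{s-1}\{Y_{u_i}>0,\,Y_{u_{i+1}}=0\}$, which sidesteps all the activation bookkeeping you anticipated.
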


In particular, the proposition implies that under Assumptions  \ref{ass:eta act} and 
\ref{ass:weak asym}, the causal-effect relation is identifiable from $\cl{L}(\mbf{Y})$ through the following criterion.

\begin{corollary}\label{Cor:cause effect 1}
Suppose Assumptions \ref{ass:eta act} and 
\ref{ass:weak asym} hold. 
Then $Y_u$ causes $Y_v$ if and only if $\cl{L}(Y_u,Y_v)$ has mass on the $y_v$ axis, but does not have mass on the $y_u$ axis. 
\end{corollary}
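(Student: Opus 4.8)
The plan is to derive Corollary~\ref{Cor:cause effect 1} as an essentially immediate consequence of Proposition~\ref{Pro:causal asym}, by reading off what the ``mass on an axis'' statements mean in terms of the quantity $\Lambda_{\{u,v\}}(y_u>0,y_v=0)$ and its symmetric counterpart. First I would fix distinct $u,v\in V$ and translate the geometric language: ``$\cl{L}(Y_u,Y_v)$ has mass on the $y_v$ axis'' means $\Lambda_{\{u,v\}}(\{y_u=0,\ y_v>0\})>0$, while ``$\cl{L}(Y_u,Y_v)$ does not have mass on the $y_u$ axis'' means $\Lambda_{\{u,v\}}(\{y_u>0,\ y_v=0\})=0$. (Note that by homogeneity, mass on an axis is either $0$ or $\infty$, and $\Lambda_{\{u,v\}}(\{y_u=0,\ y_v=0\})=0$ on $\bb{E}_{\{u,v\}}$, so the two coordinate-axis masses together with the ``interior'' mass $\Lambda_{\{u,v\}}(y_u>0,y_v>0)$ exhaust $\Lambda_{\{u,v\}}$.)

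Next I would prove the two directions. For the ``if'' direction, suppose $\cl{L}(Y_u,Y_v)$ has mass on the $y_v$ axis but not on the $y_u$ axis, i.e.\ $\Lambda_{\{u,v\}}(y_u>0,y_v=0)=0$. Since the DAG $\cl{G}$ is acyclic, we cannot have both $u\in\an(v)$ and $v\in\an(u)$. By the contrapositive of the first assertion of Proposition~\ref{Pro:causal asym} (using Assumption~\ref{ass:eta act}): if $u\notin\an(v)$ then $\Lambda_{\{u,v\}}(y_u>0,y_v=0)>0$; since this fails, we must have $u\in\an(v)$, i.e.\ $Y_u$ causes $Y_v$. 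For the ``only if'' direction, suppose $Y_u$ causes $Y_v$, i.e.\ $u\in\an(v)$. The second assertion of Proposition~\ref{Pro:causal asym} (using Assumption~\ref{ass:weak asym}) gives $\Lambda_{\{u,v\}}(y_u>0,y_v=0)=0$, so $\cl{L}(Y_u,Y_v)$ has no mass on the $y_u$ axis. It remains to show it does have mass on the $y_v$ axis, i.e.\ $\Lambda_{\{u,v\}}(y_u=0,y_v>0)>0$; since $u\in\an(v)$ and the graph is acyclic, $v\notin\an(u)$, so $u\notin\an(v)$ is false but $v\notin\an(u)$ holds—applying the first assertion of Proposition~\ref{Pro:causal asym} with the roles of $u$ and $v$ swapped yields $\Lambda_{\{u,v\}}(y_v>0,y_u=0)>0$, which is exactly mass on the $y_v$ axis. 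This completes both directions.

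The one point that needs a little care—and the only place I anticipate any friction—is the ``only if'' direction's claim that a cause must leave positive mass on the effect's axis: this is where one must invoke Proposition~\ref{Pro:causal asym} with $u,v$ \emph{swapped}, relying on $v\notin\an(u)$, which in turn uses acyclicity of $\cl{G}$. One should also double-check the degenerate possibility that $u\in\an(v)$ \emph{and} $v\in\an(u)$ simultaneously; this is ruled out precisely because $\cl{G}$ is a DAG, so the criterion is well-posed (the two axis-mass conditions cannot be jointly satisfied with the roles reversed). Everything else is a direct restatement of Proposition~\ref{Pro:causal asym}, so I would keep the proof to a few lines, emphasizing the acyclicity input and the coordinate-axis translation.
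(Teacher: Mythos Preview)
Your proposal is correct and matches the paper's intended approach: the paper presents Corollary~\ref{Cor:cause effect 1} as an immediate consequence of Proposition~\ref{Pro:causal asym} without giving a separate proof, and your argument---translating axis masses into the quantities $\Lambda_{\{u,v\}}(y_u>0,y_v=0)$ and $\Lambda_{\{u,v\}}(y_v>0,y_u=0)$, then invoking both clauses of the proposition together with acyclicity---is exactly how one unpacks that implication.
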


There is an appealing causal interpretation of the corollary.  An extreme in $Y_u$ always leads to an extreme in $Y_v$, but not vice versa --- the mass along the $y_v$-axis direction means that $Y_v$ can be extremal alone without $Y_u$.
However, the asymmetry in Corollary \ref{Cor:cause effect 1} can be too subtle to explore statistically.  
To enhance the prominence of this asymmetry for practical statistical identification, we further introduce the following working assumption.     
\begin{assumption}\label{ass:asym}(Enhanced Causal Asymmetry.)
For any $v\in V$ and $u\in \mrm{an}(v)$, there exists $c_{uv}\in (0,\infty)$, such that $\Lambda_{\{u,v\}}(y_v<c_{uv}y_u)=0$.
\end{assumption}

{The two subplots in Figure \ref{fig:sim eSCM vs gen eSCM} both give
an illustration of Assumption~\ref{ass:asym} with $u=1$ and $v=2$, where the lower boundary of each cone can be regarded as the ray $\{y_2=c_{12}y_1\}$. }

Next, we provide a characterization of  Assumption \ref{ass:asym}, accompanied with a sufficient condition that is easy to verify.  
Observe that for $v\in V$ and $u\in \mrm{an}(v)$, by a recursion of \eqref{eq:general eSCM} in $\cl{A}_u(v)$ that treats $u$ as a root node without further tracing its ancestor, one may write 
\begin{equation}\label{eq:u v F}
Y_v=F_{\cl{A}_u(v)}\pp{Y_u, \sbf{\eta}_{\An_u^{\circ}(v)},\sbf{\theta}_{\An_u^{\circ}(v) }} 
\end{equation}
for some measurable function $F_{\cl{A}_u(v)}:[0,\infty)\times [0,\infty)^{\An_u^{\circ}(v)}\times [0,1]^{\An_u^{\circ}(v)} \mapsto [0,\infty)$ such that $F_{u,v}(\cdot,\cdot,\sbf{\theta}_{\An_u^{\circ}(v)})$ is homogeneous for any $\sbf{\theta}_{\An_u^{\circ}(v)}\in [0,1]^{\An_u^{\circ}(v)}$.

\begin{proposition}\label{pro:char ass}
Assumption \ref{ass:asym} holds if and only if for any $v\in V$ and $u\in \an(v)$, there exists $c_{uv}>0$, such that we have $\Prt_{\sbf{\theta}}\pp{F_{\cl{A}_u(v)}(1,\mbf{0}_{\An_u^{\circ}(v)},\sbf{\theta}_{\An_u^{\circ}(v)})< c_{uv} }=0$. 

In addition,  a sufficient condition for Assumption \ref{ass:asym}  is that for all $v\in V$ with $\pa(v)\neq \emptyset$, the proper structural function $h_v$ in \eqref{eq:general eSCM} satisfies $h_v(\mbf{Y}_{\pa(v)},\theta_v)\ge d_v \|\mbf{Y}_{\pa(v)}\|$ $\mu$-a.e.\ for some constant $d_v>0$.
\end{proposition}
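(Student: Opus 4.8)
The plan is to exploit the activation decomposition \eqref{eq:mix act} of $\Lambda=\cl L(\mbf Y)$, reduce the event $\{y_v<c\,y_u\}$ to an event depending only on $\sbf\theta$, and then read off both the characterization and the sufficient condition.

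First I reduce the problem. Since $\{y_v<c\,y_u\}$ forces $y_u>0$, homogeneity of $\Lambda_{\{u,v\}}$ (a consequence of \eqref{eq:Lambda homo}) gives $\Lambda_{\{u,v\}}(y_v<c\,y_u)\in\{0,\infty\}$, equal to $0$ iff $\Lambda_{\{u,v\}}(y_v<c\,y_u,\ y_u>1)=0$. Writing $\Lambda=\mu(\mbf Y\in\cdot)$ and using that under $\mu=\Lambda^\perp\otimes\Prt_{\sbf\theta}$ exactly one coordinate $\eta_w$ is positive $\mu$-a.e., I decompose over $w$. On $\{\eta_w>0\}$ one has $\mbf Y_{\nd(w)}=\mbf 0_{\nd(w)}$, so by homogeneity $Y_u=\eta_w\,G^u_w(\sbf\theta)$ and $Y_v=\eta_w\,G^v_w(\sbf\theta)$, where $G^\bullet_w(\sbf\theta)$ is the value of $Y_\bullet$ when only $\eta_w=1$; moreover $Y_u>0$ forces $w\in\An(u)$. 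A Fubini computation as in Remark~\ref{Rem:red f_v} then shows $\mu(Y_v<c\,Y_u,\ Y_u>1,\ \eta_w>0)$ vanishes iff $\Prt_{\sbf\theta}(G^v_w(\sbf\theta)<c\,G^u_w(\sbf\theta),\ G^u_w(\sbf\theta)>0)=0$. Hence Assumption~\ref{ass:asym} for the pair $(u,v)$ is equivalent to: there is $c_{uv}>0$ with $G^v_w(\sbf\theta)\ge c_{uv}\,G^u_w(\sbf\theta)$ for $\Prt_{\sbf\theta}$-a.e.\ $\sbf\theta$ with $G^u_w(\sbf\theta)>0$, for every $w\in\An(u)$.

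Next I identify the $G^\bullet_w$ with the quantities $R_{pq}(\sbf\theta):=F_{\cl A_p(q)}(1,\mbf 0,\sbf\theta)$. On $\{\eta_w>0\}$ the recursion \eqref{eq:u v F} with root $w$ collapses, since all activations in $\An_w^{\circ}(v)$ vanish, giving $Y_v=Y_w\,R_{wv}(\sbf\theta)$ and likewise $Y_u=Y_w\,R_{wu}(\sbf\theta)$ with $Y_w=a_w\eta_w$. Taking $w=u$ (here $a_u>0$: automatic when $\pa(u)=\emptyset$, and in general under Assumption~\ref{ass:eta act}) yields $G^v_u/G^u_u=R_{uv}$, so the $w=u$ clause of the reduced condition reads exactly $\Prt_{\sbf\theta}(R_{uv}<c_{uv})=0$; this proves the ``only if'' direction with the same constant. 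For the ``if'' direction, assume every ancestor pair satisfies $R_{pq}\ge c_{pq}>0$ $\Prt_{\sbf\theta}$-a.s.; I must verify the reduced condition at each $w\in\An(u)$. The case $w=u$ is this hypothesis. For $w\in\an(u)$, $G^v_w/G^u_w=R_{wv}(\sbf\theta)/R_{wu}(\sbf\theta)$; if $w$ influences $v$ only through $u$ (i.e.\ $w\notin\An_u^{\circ}(v)$) then $R_{wv}=R_{wu}R_{uv}$, so the ratio equals $R_{uv}\ge c_{uv}$. The remaining case, in which $\eta_w$ also reaches $v$ along a path avoiding $u$, is the crux: one writes $Y_v=F_{\cl A_u(v)}(Y_u,\eta_w\mbf e_w,\sbf\theta)$ and must argue, using homogeneity together with the coordinatewise-monotone structure of the proper structural functions (as in all the examples of Section~\ref{sec:example}), that supplying the extra activation cannot push $Y_v/Y_u$ below $F_{\cl A_u(v)}(Y_u,\mbf 0,\sbf\theta)/Y_u=R_{uv}(\sbf\theta)\ge c_{uv}$. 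I expect controlling this ``bypass'' contribution uniformly in $\sbf\theta$ to be the main obstacle.

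Finally, for the sufficient condition I work through the characterization. Assume $h_v(\mbf Y_{\pa(v)},\theta_v)\ge d_v\|\mbf Y_{\pa(v)}\|$ $\mu$-a.e.\ whenever $\pa(v)\neq\emptyset$. Fix $v$, $u\in\an(v)$, and a directed path $u=x_0\to x_1\to\cdots\to x_m=v$ inside $\cl A_u(v)$. In evaluating $R_{uv}(\sbf\theta)=F_{\cl A_u(v)}(1,\mbf 0,\sbf\theta)$ the only nonzero input is $Y_{x_0}=1$; since $x_i\in\pa(x_{i+1})$, induction gives $Y_{x_{i+1}}=h_{x_{i+1}}(\mbf Y_{\pa(x_{i+1})},\cdot)\ge d_{x_{i+1}}\|\mbf Y_{\pa(x_{i+1})}\|\ge\kappa\,d_{x_{i+1}}\,Y_{x_i}>0$, where $\kappa>0$ is a norm-equivalence constant. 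Iterating, $R_{uv}(\sbf\theta)=Y_v\ge\prod_{i=1}^{m}\kappa\,d_{x_i}=:c_{uv}>0$, a bound free of $\sbf\theta$; the same argument applies to every ancestor pair. Thus the condition in the characterization holds, and the equivalence just established yields Assumption~\ref{ass:asym}.
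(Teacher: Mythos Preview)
Your decomposition over the active coordinate $\eta_w$ and the identification $Y_v/Y_u=R_{wv}(\sbf\theta)/R_{wu}(\sbf\theta)$ on $\{\eta_w>0\}$ are correct, and in fact more careful than the paper's treatment. The gap you flag in Case~B---a strict ancestor $w$ of $u$ that also reaches $v$ along a path bypassing $u$---is genuine and cannot be closed at the stated generality: the ``if'' direction of the characterization is false. On the diamond $1\to 2$, $1\to 3$, $2\to 4$, $3\to 4$ with $a_v\equiv 1$, $\alpha=1$, $h_2(y_1,\theta_2)=y_1/\sqrt{\theta_2}$, $h_3(y_1,\theta_3)=y_1$, and $h_4((y_2,y_3),\theta_4)=y_3\,\mbf 1_{\{y_3>0\}}+y_2\,\mbf 1_{\{y_3=0\}}$, one checks $R_{12}\ge 1$ and $R_{13}=R_{14}=R_{24}=R_{34}=1$, so the characterizing condition holds for every ancestor pair; yet on $\{\eta_1>0\}$ one has $Y_4/Y_2=R_{14}/R_{12}=\sqrt{\theta_2}$ with essential infimum $0$, so Assumption~\ref{ass:asym} fails at $(2,4)$. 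Your proposed repair via coordinatewise monotone $h_v$ is an additional hypothesis, not a proof of the stated equivalence. (The paper's own argument rests on the assertion $\An_u^{\circ}(v)\cap\An(u)=\emptyset$; in the bypass situation this fails---with $u=2$, $v=4$ above, $\An_u^{\circ}(v)=\{1,3,4\}$ while $\An(u)=\{1,2\}$---so the derivation of \eqref{eq:u v asym der} shares the same defect.)

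For the sufficient condition, routing through the characterization inherits that gap. But your path inequality $Y_{x_{i+1}}\ge\kappa\,d_{x_{i+1}}Y_{x_i}$ already holds $\mu$-a.e.\ under the hypothesis $h_v(\mbf Y_{\pa(v)},\theta_v)\ge d_v\|\mbf Y_{\pa(v)}\|$, not merely on the special configuration used to evaluate $R_{uv}$; iterating along any directed path from $u$ to $v$ gives $Y_v\ge c_{uv}Y_u$ $\mu$-a.e., which is Assumption~\ref{ass:asym} directly. That direct recursion is precisely how the paper handles the second claim, and it avoids the equivalence altogether.
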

 An example where the sufficient condition in Proposition \ref{pro:char ass} holds is when the eSCM is simple (i.e,  each $h_v$ in \eqref{eq:general eSCM} does not depend on $\theta_v$) and Assumption \ref{ass:weak asym} holds, once noting that $\cl{L}(\mbf{Y}_{\pa(v)})$ concentrates on a finite number of rays in this case. Another such example can be found by considering Example \ref{Eg:ML Noise}, once assuming that the support of the distribution $\epsilon_v$ in \eqref{eq:h_v ML Noise} is separated from $0$. 
On the other hand,   Example \ref{Eg:HR eSCM} does not satisfy Assumption \ref{ass:asym}.

\subsection{Statistical identification of extremal causal direction}\label{Sec:stats caus dir}

In this section, we propose an approach to statistically identify the cause-effect order based on Assumptions \ref{ass:eta act} and \ref{ass:asym}. We first formulate the causal asymmetry implied by the assumptions in terms of the \emph{angular measure}, from which a natural measure of causal asymmetry arises.

Recall the exponent measure  $\Lambda$, due to its homogeneity, admits a polar decomposition into angular and radial components.  More specifically, recall $\|\cdot\|$ denotes a norm on $\bb{R}^V$, $V=\{1,\ldots,d\}$.
Slightly abusing the notation,  using still $\Lambda$  to denote the push-forward measure of $\Lambda$ under the mapping $[0,\infty)^V\setminus \{\mathbf{0}\} \mapsto (0,\infty)\times \mathbb{S}_+^{V}$,   $\mathbf{y}\mapsto (r,\mathbf{z}=(z_1,\ldots,z_d)):= \left(\|\mathbf{y}\|, \mathbf{y}/\|\mathbf{y}\| \right)$, where $\mathbb{S}_+^{V}=\{\mbf{y}\in [0,\infty)^V:\  \|\mbf{y}\|=1 \}$, we have the product measure factorization
\begin{equation}\label{eq:polar}
\Lambda(dr,d\mbf{z})= \nu_\alpha(dr) S(d\mathbf{z}),
\end{equation}
where the radial measure $\nu_\alpha(dr)=  c_0 \alpha r^{-\alpha-1}dr$ with $c_0=\Lambda(\{\mbf{y}\in [0,\infty)^V:\ \|\mbf{y}\|>1  \})$, and $S$ is a probability measure on $\mathbb{S}_+^{V}$ known as  the \emph{angular (or spectral) measure}. The measure $S$   describes the directional distribution of the concurrence of the extreme values and characterizes the extremal dependence. See \cite[Chapter 6]{resnick2007heavy} for more details. 

To proceed, we specifically work with the case where $d=2$  and $\|\cdot\|= \|\cdot\|_1$. In this case, we parameterize  $\bb{S}_+^{\{u,v\}}$, $u,v\in V$, $u\neq v$, by the map $[0,1]\mapsto\bb{S}_+^{\{u,v\}}, w\mapsto \pp{w,1-w}$,  and regard $S$  as a probability measure on $[0,1]$ through the pullback of the parameterization map.  Then \eqref{eq:polar} becomes
\begin{equation}\label{eq:polar 2d}
\Lambda(dr,dw)= \nu_\alpha(dr)  S(dw).
\end{equation}
Let $a=\sup\{w\in [0,1]:\  S([0,w))=0  \}$, $b=\inf\{w\in [0,1]:\  S((w,1])=0  \}$. We refer to $[a,b]\subset [0,1]$ as the \emph{angular support interval}, which is the smallest closed interval containing the support of $S$.  See Figure \ref{fig:ab cone} for an illustration.
 \begin{figure}
\centering
 \begin{tikzpicture}[scale=0.7]

    \draw[->] (0,0) -- (5,0) node[anchor=north] {$y_u$};
    \draw[->] (0,0) -- (0,5) node[anchor=east] {$y_v$};

   
  \fill[red, opacity=0.3] 
      (0,0) -- ({4.9*cos(10)}, {4.9*sin(10)}) 
      arc[start angle=10, end angle=70, radius=4.9] -- cycle;

    \draw[dashed] (0,4) -- (4,0);

    \node[left] at (0,4) {$1$};
    \node[below] at (4,0) {$1$};

    \coordinate (I1) at ({4/(tan(70)+1)}, {4 - 4/(tan(70)+1)});
    \coordinate (I2) at ({4/(tan(10)+1)}, {4 - 4/(tan(10)+1)});

    \draw[dashed] (I1) -- (I1|-0,0);
    \draw[dashed] (I2) -- (I2|-0,0);

    \node[below] at (I1|-0,-0.15) {$a$};
    \node[below] at (I2|-0,0) {$b$};

\end{tikzpicture}
\caption{\footnotesize{Illustration of angular support interval $[a,b]$. The shaded area represents the smallest cone/sector containing the support of $\Lambda_{\{u,v\}}$.}} \label{fig:ab cone}
\end{figure}
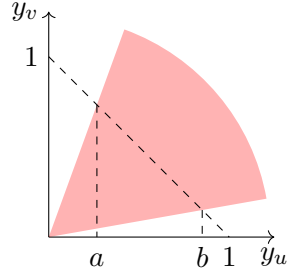


Now consider  an eSCM $\mbf{Y}$  with respect to a DAG $\cl{G}$ as in Definition \ref{def:eSCM}. Then  under Assumptions  \ref{ass:eta act} and \ref{ass:asym}, one obtains the following cause-effect identification criterion which enhances Corollary \ref{Cor:cause effect 1}.
\begin{corollary}\label{Cor:cause effect 2}
Suppose Assumptions  \ref{ass:eta act}     
 and \ref{ass:asym} hold. Then  
$Y_u$ causes $Y_v$  if and only if the angular support interval $[a,b]$ of  $\cl{L}(Y_u,Y_v)$ satisfies $a=0$ and $b<1$.
\end{corollary}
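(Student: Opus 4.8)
\textbf{Proof plan for Corollary~\ref{Cor:cause effect 2}.} The plan is to translate the axis-mass criterion of Corollary~\ref{Cor:cause effect 1} into the angular-support language and then use Assumption~\ref{ass:asym} to pin down the upper endpoint $b$. Recall that under the parameterization $w\mapsto(w,1-w)$ of $\bb{S}_+^1$ (with $\|\cdot\|_1$), the point $w=1$ corresponds to the $y_u$-axis direction $(1,0)$ and $w=0$ corresponds to the $y_v$-axis direction $(0,1)$. Since the radial factor $\nu_\alpha$ in the polar decomposition \eqref{eq:polar 2d} is supported on all of $(0,\infty)$, the bivariate law $\cl{L}(Y_u,Y_v)=\Lambda_{\{u,v\}}$ places mass on the $y_v$-axis (i.e.\ $\Lambda_{\{u,v\}}(y_u=0,y_v>0)>0$) if and only if $S(\{0\})>0$, and likewise mass on the $y_u$-axis if and only if $S(\{1\})>0$. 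Thus ``mass on the $y_v$-axis'' forces $a=\sup\{w:S([0,w))=0\}=0$, and ``no mass on the $y_u$-axis'' gives $S(\{1\})=0$.

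The key remaining point is to upgrade $S(\{1\})=0$ to the strict inequality $b<1$, and conversely. First I would argue the ``only if'' direction. Suppose $Y_u$ causes $Y_v$, so $u\in\an(v)$. By Assumption~\ref{ass:asym} there is $c_{uv}\in(0,\infty)$ with $\Lambda_{\{u,v\}}(y_v<c_{uv}y_u)=0$; in polar coordinates the region $\{y_v<c_{uv}y_u\}$ is $\{(r,w):r>0,\ 1-w<c_{uv}w\}=\{(r,w):w>1/(1+c_{uv})\}$ (up to the ray $r=0$, which carries no mass), so $S\big((1/(1+c_{uv}),1]\big)=0$. Hence $b\le 1/(1+c_{uv})<1$. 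Combined with the previous paragraph (which gave $a=0$ from the $y_v$-axis mass guaranteed by Assumption~\ref{ass:eta act}, via Proposition~\ref{Pro:causal asym} / Corollary~\ref{Cor:cause effect 1} applied in the direction $v\not\in\an(u)$, which holds because $\cl{G}$ is acyclic), we conclude $a=0$ and $b<1$.

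For the ``if'' direction, assume $a=0$ and $b<1$. From $b<1$ we get $S(\{1\})=0$, so $\Lambda_{\{u,v\}}$ has no mass on the $y_u$-axis; from $a=0$ together with the fact that $a$ is an infimum-type endpoint we need $S$ to actually charge every neighborhood of $0$, in particular $S(\{0\})>0$ must be shown — here I would note that $a=0$ alone only gives $S([0,\varepsilon))>0$ for all $\varepsilon>0$, so to get genuine axis mass I invoke the structural dichotomy from the decomposition \eqref{eq:mix act}: the component $\Lambda_v$ (from activation $\eta_v>0$) is supported where $\mbf{Y}_{\nd(v)}=\mbf 0$; if $u\in\nd(v)$ this contributes atomic mass at $w=0$, whereas if $u\in\an(v)$ Assumption~\ref{ass:asym} confines that component to $\{w\le 1/(1+c_{uv})\}$ away from $1$. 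Then I apply the contrapositive of Corollary~\ref{Cor:cause effect 1}: the configuration (mass on $y_v$-axis, none on $y_u$-axis) is exactly the one forcing $Y_u$ to cause $Y_v$; the only alternatives — no causal relation, or $Y_v$ causes $Y_u$ — would respectively force mass on both axes or mass on the $y_u$-axis with none on the $y_v$-axis, contradicting $a=0,b<1$. The main obstacle is the bookkeeping at the endpoint $a=0$: ensuring that $a=0$ genuinely encodes an atom (positive $S(\{0\})$) rather than merely support accumulating at $0$, which is why I route the argument through the explicit mixture decomposition \eqref{eq:mix act} and Proposition~\ref{Pro:causal asym} rather than the angular support interval alone.
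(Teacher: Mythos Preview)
Your ``only if'' direction is clean and matches the paper's implicit argument (the remark $b=1/(1+c_{uv})$ immediately after the corollary). Your final case analysis for the ``if'' direction also lands: in both alternatives to ``$u\in\an(v)$'' one has $u\notin\an(v)$, so the first claim of Proposition~\ref{Pro:causal asym} (under Assumption~\ref{ass:eta act}) gives $\mu(Y_u>0,Y_v=0)>0$, hence $S(\{1\})>0$ and $b=1$, contradicting $b<1$.

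Two points to tighten. First, the detour attempting to extract $S(\{0\})>0$ from $a=0$ via the decomposition~\eqref{eq:mix act} is unnecessary and confused: the split ``$u\in\nd(v)$ versus $u\in\an(v)$'' is not a dichotomy ($\an(v)\subset\nd(v)$ in a DAG), and at that stage of the ``if'' direction you do not yet know where $u$ sits relative to $v$. Fortunately you do not need $S(\{0\})>0$ at all --- $b<1$ alone already forces $u\in\an(v)$ by the contrapositive of Proposition~\ref{Pro:causal asym}'s first claim, so the $a=0$ hypothesis plays no role in the backward implication. Second, be careful invoking Corollary~\ref{Cor:cause effect 1}: it is stated under Assumption~\ref{ass:weak asym}, which is \emph{not} among the hypotheses here (only Assumptions~\ref{ass:eta act} and~\ref{ass:asym} are). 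Route the argument directly through Proposition~\ref{Pro:causal asym}, as your case analysis in fact does.
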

In particular,  if $c_{uv}$ in Assumption \ref{ass:asym} is the maximum slope that satisfies $\Lambda_{\{u,v\}}(y_v<c_{uv}y_u)=0$, then $b=1/(1+c_{uv})$.

Corollary~\ref{Cor:cause effect 2} motivates the introduction of the following \emph{angular asymmetry coefficient (AAC)}. For distinct nodes $u,v\in V$, define
\begin{equation}\label{eq:AAC}
 \tau(u,v)= 1-b-a.  
\end{equation}
Note that in view of Proposition \ref{Pro:causal asym}, when there is no causal relation between $u$ and $v$ ($u\notin \an(v)$ and $v\notin \an(u)$), we have $a=0$ and $b=1$.  
Meanwhile, the sign of AAC aligns with the causal direction.  In addition, when the roles of $u$ and $v$ switch, so do the roles of $a$ and $1-b$. Hence, we have the skewed symmetric property: $\tau(u,v)=-\tau(v,u)$; see Figure \ref{fig:tau} for a summary of the behavior of AAC under Assumptions  \ref{ass:eta act} and \ref{ass:asym}.

\begin{figure}
\centering
\begin{tikzpicture}[scale=0.5]

\draw[->] (0,0) -- (5,0) node[anchor=north] {$y_u$};
\draw[->] (0,0) -- (0,5) node[anchor=east] {$y_v$};

\draw[-, line width=0.6mm, red] (0,0) -- (0,4.9);

\fill[red, opacity=0.2] (0,0) -- ({4.9*cos(20)}, {4.9*sin(20)}) 
    arc[start angle=20, end angle=90, radius=4.9] 
    -- cycle;

\node at (2.5, -1.4) {\footnotesize{$u$ causes $v$; $\tau(u,v)>0$.}};

\begin{scope}[xshift=10cm]
    \draw[->] (0,0) -- (5,0) node[anchor=north] {$y_u$};
    \draw[->] (0,0) -- (0,5) node[anchor=east] {$y_v$};
    
    \draw[-, line width=0.6mm, red] (0,0) -- (0,4.9);
    \draw[-, line width=0.6mm, red] (0,0) -- ({4.9*cos(0)}, {4.9*sin(0)});
    
    \fill[red, opacity=0.2] (0,0) -- ({4.9*cos(0)}, {4.9*sin(0)}) 
        arc[start angle=0, end angle=90, radius=4.9] 
        -- cycle;

    \node at (2.5, -1.4) {\small{No causal relation; $\tau(u,v)=0$.}};
\end{scope}

\begin{scope}[xshift=20cm]
    \draw[->] (0,0) -- (5,0) node[anchor=north] {$y_u$};
    \draw[->] (0,0) -- (0,5) node[anchor=east] {$y_v$};
    
    \draw[-, line width=0.6mm, red] (0,0) -- (4.9,0);
    
    \fill[red, opacity=0.2] (0,0) -- ({4.9*cos(0)}, {4.9*sin(0)}) 
        arc[start angle=0, end angle=70, radius=4.9] 
        -- cycle;

    \node at (2.5, -1.4) {\footnotesize{$v$ causes $u$; $\tau(u,v)<0$.}};
\end{scope}

\end{tikzpicture}
\caption{\footnotesize{Behavior of   angular asymmetry coefficient (AAC) with respect to causal relations under Assumptions \ref{ass:eta act} and \ref{ass:asym}. Solid lines indicate measure  masses, while shaded cones represent angular supports.}}\label{fig:tau}
\end{figure}
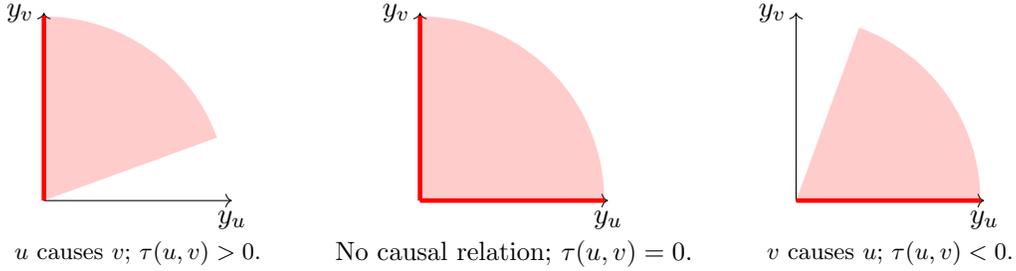

Next we propose an estimator of the angular support interval $[a,b]$, which is a modification of the one considered in \cite{WangResnick2024} mainly to ensure a symmetric treatment of the two variables.  Let $\Delta=\{(s,t)\in [0,1]^2,\  s\le t\}$.  Consider the following function $d: [0,1]\times \Delta \mapsto [0,1] $ that serves as a distance from point $w\in [0,1]$ to interval $[s,t]$, $0\le s\le t\le 1$, defined as 
\begin{equation}\label{eq:tw d}
d(w,s,t)=  \pp{s-w} \vee  \pp{w-t} \vee 0.  
\end{equation}
Consider also a function $L:[1,\infty)\mapsto [0,\infty)$  defined as
$
L(r)= r \log r,
$
which will play the role of weighting the observations according to their  radial locations.
Let $(X_{i,1},X_{i,2})_{i=1,\ldots,n}$ be i.i.d.\ observations of a random vector $(X_1,X_2)$ that satisfies the MRV condition \eqref{eq:Lambda}. Order them as random vectors $(X_{(1),1},X_{(1),2}),\ldots,(X_{(n),1},X_{(n),2})$, so that $R_{(1)}\ge  \ldots \ge  R_{(n)}$, $R_{(i)}:= X_{(i),1}+X_{(i),2}$. Set $W_{(i)}=X_{(i),1}/R_{(i)}$. Here and below, we often suppress the dependence on $n$ for the brevity of notations.

{Let $k\equiv k_n$ denote the \emph{extremal subsample size},  $1\le k\le n$},   define
$$
D_k(s,t)=\frac{1}{k} \sum_{i=1}^k  d(W_{(i)},s,t) L(R_{(i)}/ R_{(k)}),
$$
and set the objective function 
\begin{equation}\label{eq:g_n(s,t)}
 g_n(s,t)=   t-s   +  \lambda k^{1/2}   D_k(s,t),
\end{equation}
where $\lambda\in (0,\infty)$ is a tuning parameter. The first term $t - s$ reflects the size of the cone, whereas the second term 
$\lambda k^{1/2} D_k(s,t)$ penalizes deviations of the extremal subsample from the cone.
Note that the objective function $g_n$ is continuous.
The estimator of $a$ and $b$ is formulated as follows:
\begin{equation*} 
\pp{\wh{a}_n,\wh{b}_n
}=\argmin_{s,t\in \Delta} g_n(s, t),
\end{equation*}
where the operation $\argmin$ is understood as selecting a measurable representative of the minimizer if the latter is not unique.  A larger   $\lambda$ value encourages a wider $\pb{\wh{a}_n,\wh{b}_n}$ interval. Empirically, we find that the range $1 \leq \lambda \leq 5$ typically yields good performance. In our numerical study, the minimization is performed using the Nelder–Mead method, implemented by the base \textsf{R} function \texttt{optim} \citep{R2024}.

In view of \cite{WangResnick2024},
the estimator $\pp{\wh{a}_n,\wh{b}_n
}$  is consistent under a hidden regular variation condition \citep{resnick2024art}, which, loosely speaking, says that the radial tail of $(X_1,X_2)$ outside the angular support interval $[a,b]$ is lighter than the one inside.  In the supplement \cite{fang2025supplement}, we include a self-contained treatment of the consistency of $\pp{\wh{a}_n,\wh{b}_n
}$ under a second-order condition we refer to as $\mathcal{SO}(\rho)$ (see Definition \ref{Def:second order} in \cite{fang2025supplement}), where $\rho>0$ is the second-order parameter. One may understand $(1+\rho)\alpha$ as the tail index outside $[a,b]$, in contrast to the tail index $\alpha$ inside.

To understand the intuition behind the estimation objective function $g_n(s,t)$, note first that if $[a,b]\setminus[s,t]\neq\emptyset$, then $D_k(s,t)$ will include contributions from many extremal sample points from the heavy–tail angular region $[a,b]\setminus[s,t]$, causing $\lambda k^{1/2}D_k(s,t)$ to be very large relative to the interval length $t-s$. In order to reduce $g_n$ in this case, the interval $[s,t]$ must be expanded until it fully covers $[a,b]$. Conversely, if $[a,b]\subsetneq[s,t]$, then the sum in $D_k(s,t)$ only involves a small number of extremal samples from the light–tail region $[0,1]\setminus[s,t]$, making $\lambda k^\gamma D_k(s,t)$ negligible compared with $t-s$ under suitable assumptions. To reduce $g_n$ in this case, one therefore needs to shrink $[s,t]$ so as to decrease its length $t-s$.

The condition $\mathcal{SO}(\rho)$ is slightly weaker than the hidden regular variation condition assumed in  \cite{WangResnick2024}. The consistency holds when  $k=k_n\rightarrow\infty$ and $k=o(n^{\rho/(1/2+\rho)})$ as $n\rightarrow\infty$.
Then plugging the consistent estimates $\wh{a}_n$ and $\wh{b}_n$ into \eqref{eq:AAC}, we get a consistent estimate  of $\tau(u,v)$ as 
\begin{equation}\label{eq:tau est}
\wh{\tau}(u,v)=1-\wh{b}_n-\wh{a}_n .
\end{equation}

\subsection{Extremal causal order identification}\label{sec:causal order}

 Given a causal DAG with node set $V=\{1,\ldots,d\}$, the \emph{causal order} (or topological order) is a permutation  $\pi:V\mapsto V$ satisfying $u\in \an(v) \implies\pi(u)<\pi(v)$. For a causal DAG, there exists at least one causal order, which may not be unique.  Even though a causal order does not fully identify a DAG, it provides crucial information on causal relations and reduces the search space for further DAG discovery. See, e.g., \cite[Appendix B]{peters2017elements} and \cite{park2020identifiability}. 
 
 With $\tau(u,v)$ defined in \eqref{eq:AAC}, we provide a method to identify the causal order $\pi$ of an eSCM satisfying Assumptions \ref{ass:eta act} and \ref{ass:asym}.  
 Specifically, we give a variant to the \emph{extremal ancestral search (EASE)} algorithm \citep{gnecco2021causal},
which replaces the causal tail coefficient $\Gamma_{uv}$ (see \cite[Definition 1]{gnecco2021causal}) in the original algorithm by AAC $\tau(u,v)$. For the convenience of the reader, we include the details in Algorithm \ref{Alg:EASE}. We note that  the algorithm essentially relies on the ranks of $\tau(u,v)$, and thus enjoys the tolerance of uncertainty in estimating $\tau(u,v)$ compared to relying on the signs of $\tau(u,v)$ to infer causal order.
Proposition~\ref{Pro:EASE empirical} below provides a consistency result of Algorithm \ref{Alg:EASE}.

\begin{algorithm}
\caption{EASE algorithm with AAC}\label{Alg:EASE}
\begin{algorithmic}[1]
  \Require AACs $\pp{\tau(u,v)}_{u,v\in V, u\neq v}$ associated with node set $V=\{1,\ldots,d\}$
  \Ensure Causal order $\pi: V \mapsto V$
  \State $V_1 \gets V$
  \For{$s = 1$ to $d$}
    \ForAll{$v \in V_s$}
      \State $M_v^{(s)} \gets \max_{u \in V_s \setminus \{v\}} \tau(u,v)$
    \EndFor
    \State $v_s \in \argmin_{v \in V_s} M_v^{(s)}$
    \State $\pi(v_s) \gets s$
    \State $V_{s+1} \gets V_s \setminus \{v_s\}$
  \EndFor
  \State \Return permutation $\pi$
\end{algorithmic}
 
\end{algorithm}

\begin{proposition}\label{Pro:EASE empirical}
Suppose that $\tau(u,v)$ in Algorithm \ref{Alg:EASE} is estimated consistently.  Then with probability tending to $1$,   Algorithm \ref{Alg:EASE} returns a correct causal order.
\end{proposition}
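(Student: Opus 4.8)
The plan is to convert Assumptions~\ref{ass:eta act} and \ref{ass:asym} into a clean sign pattern for the population coefficients $\tau(u,v)$, then to show that the EASE recursion driven by those coefficients always removes a currently ``eligible'' node (hence produces a valid topological order), and finally to upgrade to the empirical algorithm via a uniform margin that is insensitive to the data-dependent trajectory of the recursion.

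First I would establish the sign pattern. Under the two assumptions, Corollary~\ref{Cor:cause effect 2} shows that $u\in\an(v)$ forces the angular support interval $[a,b]$ of $\cl{L}(Y_u,Y_v)$ to satisfy $a=0$ and $b<1$, so $\tau(u,v)=1-b-a>0$; the skew-symmetry $\tau(u,v)=-\tau(v,u)$ then gives $\tau(u,v)<0$ when $v\in\an(u)$; and when neither node is an ancestor of the other, Proposition~\ref{Pro:causal asym} applied in both orderings (using Assumption~\ref{ass:eta act}) puts mass on both coordinate axes, i.e. $S(\{0\})>0$ and $S(\{1\})>0$, so $a=0$, $b=1$ and $\tau(u,v)=0$. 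Since for any ordered pair of distinct nodes of a DAG exactly one of these three possibilities occurs, we conclude, for distinct $u,v$: $\tau(u,v)>0$ exactly when $u\in\an(v)$, $\tau(u,v)<0$ exactly when $v\in\an(u)$, and $\tau(u,v)=0$ exactly when neither is an ancestor of the other. In particular $\delta_0:=\min\{|\tau(u,v)|:\ u\in\an(v)\text{ or }v\in\an(u)\}>0$ unless $\cl{G}$ is edgeless, a trivial case in which every permutation is a valid causal order.

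Next I would analyze the recursion combinatorially. Call $v\in V_s$ \emph{eligible at step $s$} if $\an(v)\cap V_s=\emptyset$, i.e. all ancestors of $v$ have already been removed; producing a correct causal order is then equivalent to removing an eligible node at every step, and since the ancestor relation restricted to the nonempty set $V_s$ is a strict partial order, $V_s$ always contains at least one eligible node. By the sign pattern, an eligible $v$ has $\tau(u,v)\le 0$ for every $u\in V_s\setminus\{v\}$ and hence $M_v^{(s)}\le 0$, whereas a non-eligible $v$ has some ancestor $u^{*}\in\an(v)\cap V_s\setminus\{v\}$ with $\tau(u^{*},v)>0$ and hence $M_v^{(s)}>0$; so with the exact coefficients $\argmin_v M_v^{(s)}$ always selects an eligible node. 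To make this robust, set $\delta=\tfrac{1}{2}\delta_0$ (or $\delta=1$ in the edgeless case) and work on the event $E_n=\bigcap_{u\ne v}\{|\wh\tau(u,v)-\tau(u,v)|<\delta\}$. On $E_n$ one has $\wh\tau(u,v)>\delta$ whenever $u\in\an(v)$ and $\wh\tau(u,v)<\delta$ whenever $u\notin\an(v)$, so for \emph{every} working set $V_s$ the eligible nodes are exactly those $v$ with $\wh M_v^{(s)}:=\max_{u\in V_s\setminus\{v\}}\wh\tau(u,v)<\delta$ while every non-eligible node has $\wh M_v^{(s)}>\delta$; since an eligible node exists, $\argmin_v\wh M_v^{(s)}$ again lands on an eligible node at every step, so on $E_n$ the algorithm removes an eligible node throughout and outputs a correct causal order.

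Finally, as there are finitely many ordered pairs and each $\wh\tau(u,v)\ConvP\tau(u,v)$ by hypothesis, $\Prt(E_n)\to1$, whence Algorithm~\ref{Alg:EASE} returns a correct causal order with probability tending to $1$. The main obstacle, and the only point that is not purely routine, is that the sequence of working sets $V_1,V_2,\dots$ explored by the empirical algorithm is itself random (it is determined by the estimated coefficients), so one cannot simply argue that the empirical run imitates the population run; this is precisely why the threshold $\delta$ must separate eligible from non-eligible nodes \emph{uniformly over all sets $V_s$ the recursion could reach}, which is exactly what the margin $\delta_0>0$ from the first step provides. The remaining ingredients---the sign pattern drawn from Corollary~\ref{Cor:cause effect 2} and Proposition~\ref{Pro:causal asym}, and the combinatorial fact that removing eligible nodes yields a topological order---are routine.
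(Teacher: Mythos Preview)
Your argument is correct and follows essentially the same route as the paper: both rest on the key sign pattern ``$u\in\an(v)\iff\tau(u,v)>0$'' together with the positive margin $m_\tau=\min\{\tau(u,v):u\in\an(v)\}$ (your $\delta_0$), and then argue that once all $\wh\tau(u,v)$ fall within half that margin of the truth, every step of the recursion necessarily removes an eligible node. The paper packages this as a finite-sample bound $\Prt(\wh\pi\notin\Pi)\le d^2\max_{u\ne v}\Prt(|\wh\tau(u,v)-\tau(u,v)|>m_\tau/2)$ adapted from \cite{gnecco2021causal}, whereas you work directly on the intersection event $E_n$; the two are equivalent up to a union bound, and your more explicit treatment of the combinatorics (eligible vs.\ non-eligible via the threshold $\delta$, uniformity over all reachable $V_s$) spells out precisely the step that the paper defers to \cite{gnecco2021causal}.
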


Currently, no asymptotic distributional result is available for the AAC estimator $\wh{\tau}(u,v)$. For the causal tail coefficient, and indeed for a more general version thereof, \cite{bodik2024causality} empirically proposed a bootstrap procedure to facilitate inference. Establishing a theoretically justified inference framework for AAC therefore remains an important direction for future research.
 
 \section{Numerical results}\label{Sec:simdata}


In this section, we provide a simulation study to analyze the performance of the proposed method, together with its efficacy while applied to two real data examples. Additional simulation can be found in Section~\ref{append:sim} of the supplement \cite{fang2025supplement} as well. The R code to reproduce these results is available at \url{https://github.com/feifang1/eSCM_code}.

\subsection{Simulation studies of extremal causal order discovery}\label{sec:sim}

We start with a simulation study on Algorithm \ref{Alg:EASE}. In view of Theorem~\ref{Thm:limit eSCM}, we simulate some probabilistic SCMs as realistic approximations of eSCMs.  In particular,  following notations in Section \ref{Sec:limit}, we consider the sum-linear (SL) probablistic SCMs 
\begin{equation}\label{eq:sum_linear sim}
X_v=\sum_{u\in \mrm{pa}(v)}   \beta_{uv}(\theta_v)  X_u +  \zeta_v 
\end{equation}
and the max-linear (ML) probabilistic SCMs
\begin{equation}\label{eq:max_linear sim}
X_v=\bigvee_{u\in \mrm{pa}(v)}  \pp{ \beta_{uv}(\theta_v)   X_u}  \vee \zeta_v,
\end{equation}
where each $\beta_{uv}(\theta_v)\ge 0$ is a randomized coefficient as a measurable function of the uniform random variable $\theta_v$. 

Assume also that $\beta_{uv}(\theta_v)$'s  are i.i.d.\  across $v\in V$ and $u\in \pa(v)$ with distribution $F_\beta$.  Note that even with the single randomizer $\theta_v$, it is possible to generate $|\pa(v)|$ independent variables \cite[Theorem 4.19]{kallenberg:2021:foundations}. Furthermore, $\pp{\zeta_v}_{v\in V}$ are i.i.d.\ random variables with a Pareto distribution and $F_{\zeta}(x)=1-x^{-\alpha_0}$, $x\ge 1$, $\alpha_0\in (0,\infty)$. The tail index $\alpha_0$ controls how prominently the effects of the activation variables $\sbf{\eta}$ are exhibited; the lower $\alpha_0$,  the more prominent the effect of ``single big jump'' is shown in a finite sample.
To assess the error rate of the estimated causal order $\wh{\pi}$, we use \emph{ancestral violation rate} defined as 
$
\frac{1}{|E_{\cl{A}}|}\sum_{(u,v)\in E_{\cl{A}}} \mbf{1}\{\wh{\pi}(u)>\wh{\pi}(v)\}$, where  $ E_{\cl{A}}=\{\pp{u,v}\in V^2:\ u\in \an(v)\}.
$

In the simulation, we consider DAGs with node size $d\in\{5,10,15\}$. Random DAGs are generated using the \texttt{randDAG} function in the \texttt{pcalg} R package \citep{Kalisch2012Causal}, with an average node degree of 3.
For each simulation experiment (repeated 500 times per $d$),  based on the DAG, we simulate one data set of size $n=1000$ from one of four model setups: SL0,  SL1, ML0 and ML1.
Both SL0 and SL1 correspond to the sum-linear SCM \eqref{eq:sum_linear sim}. For SL0,  $F_\beta=\mathrm{Uniform}(l,u)$ with $l=0.04$ and $u=0.4$. For SL1, $F_\beta=\mathrm{lognormal}(\mu,\sigma)$, where $\mu=(l+u)/2$, and $\sigma$ is chosen so that $\Prt\pp{l\le \mathrm{lognormal}(\mu,\sigma)\le u}=0.95$.  
SL0 strictly satisfies Assumption \ref{ass:asym}, while SL1 only approximately satisfies it, allowing us to test robustness to moderate deviations. ML0 and ML1 both use the max-linear SCM \eqref{eq:max_linear sim}, with $F_\beta$ specified in the same way. 

For each simulated dataset, denoting $(z_i)_{i=1}^n$ as the original values of a node component with descending order statistics $z_{(1)}\ge z_{(2)}\ge\ldots\ge z_{(n)}$, we apply the marginal transform  in
 \eqref{eq:marg trans} with $\alpha=2$  to $(z_i)_{i=1}^n$ to ensure that the marginal distribution follows a standard Pareto  distribution with parameter $\alpha=2$. Following a routine practice in extreme value analysis, the CDF $F$ of $(z_i)_{i=1}^n$  is estimated semi-parametrically as
\begin{equation}
\label{est_CDF_semi_para}
\wh{F}(z)=
\begin{cases}
\dfrac{1}{n}\sum_{i=1}^n \mathbf{1}_{\{z_i<z\}}, & z \in (-\infty, z_{(m)}], \\
1-\dfrac{m}{n} \left(1+ \hat{\gamma}_{m} \dfrac{z-z_{(m)}}{\hat{\sigma}_{m}}\right)^{-1/\hat{\gamma}_m}, & z \in (z_{(m)}, \infty),
\end{cases} \end{equation}
where $m=50$ is the extremal subsample size used to fit a generalized Pareto distribution to the upper tail with estimated shape parameter $\hat{\gamma}_m$ and scale parameter $\hat{\sigma}_m$. For the implementation of these estimators, we use the function \texttt{fit.gpd} with its default settings from the \texttt{R} package \texttt{mev} \cite{mev_package}. Note that the subsample size $m$ used for the marginal tail estimation needs not equal the subsample size $k$ used for the estimation of AAC. The ancestral violation rate is computed by comparing the causal order inferred from Algorithm \ref{Alg:EASE} to the true DAG, using $k \in \{\frac{1}{2}\sqrt{n}, \frac{3}{2} \sqrt{n}, \frac{5}{2} \sqrt{n}\}$ (rounded to the nearest integer), and the penalty parameter in \eqref{eq:g_n(s,t)} is set to $\lambda=2$.

Table \ref{tab:mean_CI_n1000_d1} summarizes the simulation results for $\alpha_0 = 3$, comparing the performance of the AAC method to that of the causal tail coefficient (CTC) introduced in \cite{gnecco2021causal}.
For AAC, we observe that it provides more accurate estimates of causal orders for the SL models than for the ML models, a pattern also seen with the CTC approach. Compared to CTC, our AAC method consistently yields lower ancestral violation rates for both ML models. Moreover, the performance of AAC improves as $k$ increases. This improvement is likely due to the fact that using too few data points can lead to biased estimates of $\hat{a}_n$ and $\hat{b}_n$, making the resulting AAC values less reliable.

The supplement \citep{fang2025supplement} also includes results for $\alpha_0=1$ and $5$, where we observe a similar pattern.


\begin{table}[ht]
\centering
\caption{\footnotesize{Simulation study with $\alpha_0=3$. Each numerical result is in the form of average ancestral violation rate across $500$ simulation instances. The asterisk marks the better performing one between AAC (angular asymmetry coefficient) and CTC (causal tail coefficient).}}
\begin{scriptsize}
\begin{tabular}{c
                c
                @{\hskip 0.4cm}l
                @{\hskip 0.3cm}l
                @{\hskip 0.4cm}l
                @{\hskip 0.3cm}l
                @{\hskip 0.4cm}l
                @{\hskip 0.3cm}l
                @{\hskip 0.4cm}l
                @{\hskip 0.3cm}l}
\toprule
$d$ & $k$ & \multicolumn{2}{c}{SL0} & \multicolumn{2}{c}{ML0} & \multicolumn{2}{c}{SL1} & \multicolumn{2}{c}{ML1} \\
\cmidrule(lr){3-4} \cmidrule(lr){5-6} \cmidrule(lr){7-8} \cmidrule(lr){9-10}
    &     & AAC & CTC & AAC & CTC & AAC & CTC & AAC & CTC \\
\midrule
\multirow{3}{*}{5}
& 50  & 0.0243 & {0.0081}* & {0.1020}* & 0.2243 & 0.0231 & {0.0105}* & {0.0960}* & 0.2060 \\
& 100 & 0.0215 & {0.0185}* & {0.1006}* & 0.2640 & {0.0217}* & 0.0249 & {0.0947}* & 0.2765 \\
& 150 & {0.0208}* & 0.0301 & {0.0988}* & 0.3098 & {0.0210}* & 0.0410 & {0.0937}* & 0.3220 \\
\midrule
\multirow{3}{*}{10}
& 50  & 0.0474 & {0.0240}* & {0.1521}* & 0.2732 & 0.0432 & {0.0239}* & {0.1467}* & 0.2557 \\
& 100 & {0.0431}* & 0.0455 & {0.1499}* & 0.3298 & {0.0384}* & 0.0497 & {0.1391}* & 0.3174 \\
& 150 & {0.0416}* & 0.0741 & {0.1474}* & 0.3621 & {0.0364}* & 0.0768 & {0.1377}* & 0.3435 \\
\midrule
\multirow{3}{*}{15}
& 50  & 0.0585 & {0.0284}* & {0.1653}* & 0.2813 & 0.0533 & {0.0270}* & {0.1561}* & 0.2933 \\
& 100 & {0.0534}* & 0.0561 & {0.1620}* & 0.3376 & {0.0499}* & 0.0570 & {0.1508}* & 0.3385 \\
& 150 & {0.0522}* & 0.0828 & {0.1611}* & 0.3764 & {0.0476}* & 0.0874 & {0.1514}* & 0.3883 \\
\midrule
\multirow{3}{*}{30}
& 50  & 0.0837 & {0.0411}* & {0.2014}* & 0.3195 & 0.0880 & {0.0401}* & {0.2026}* & 0.3202 \\
& 100 & {0.0778} & {0.0747}* & {0.1952}* & 0.3699 & {0.0815}* & 0.0732 & {0.1978}* & 0.3780 \\
& 150 & {0.0756}* & 0.1092 & {0.1927}* & 0.4071 & {0.0797}* & 0.1095 & {0.1961}* & 0.4111 \\
\hline
\end{tabular}
\label{tab:mean_CI_n1000_d1}
\end{scriptsize}
\end{table}

\subsection{River discharge data}\label{sec:river}

In this section, we apply Algorithm~\ref{Alg:EASE} to the river discharge data used in \cite{gnecco2021causal}, available via the \texttt{causalXtreme} package. The dataset contains $n=4600$ daily summer discharges from 12 stations along a river basin, pre-processed to reduce seasonality and temporal dependence.
Figure 7 of \cite{gnecco2021causal} provides a DAG representing the stations and river flow connections, while Figure 5 in their Supplementary Material shows a geographic map of the study area. The known river flow directions serve as ground truth for evaluating extremal causal directions. Additionally, \cite{gnecco2021causal} show that the data exhibits heavy tails with a common marginal tail index $\alpha$, satisfying the requirement in \eqref{eq:pareto marginal}.


Figure \ref{fig:river} (left) shows the ancestral violation rates for the causal order learned by the EASE algorithm using three approaches: (1) the CTC method from \cite{gnecco2021causal}; (2) the AAC computed from marginally transformed data, as described in Section \ref{sec:sim} with $m=50$ in \eqref{est_CDF_semi_para}; and (3) the AAC computed from data without marginal transformation. The ancestral violation rate is plotted against $k$, and the penalty parameter in \eqref{eq:g_n(s,t)} is chosen as $\lambda=2$. 

We observe that the AAC without marginal transformation consistently achieves 100\% accuracy in identifying the correct causal order across a substantial range of $k$. In addition, the AAC method with marginal transformation achieves  
stable accuracy as $k$
increases, performing comparably to the CTC method.

 \begin{figure}[ht]
   \centering
   \begin{minipage}[t]{0.49\textwidth}
     \centering
     \includegraphics[width=\linewidth]{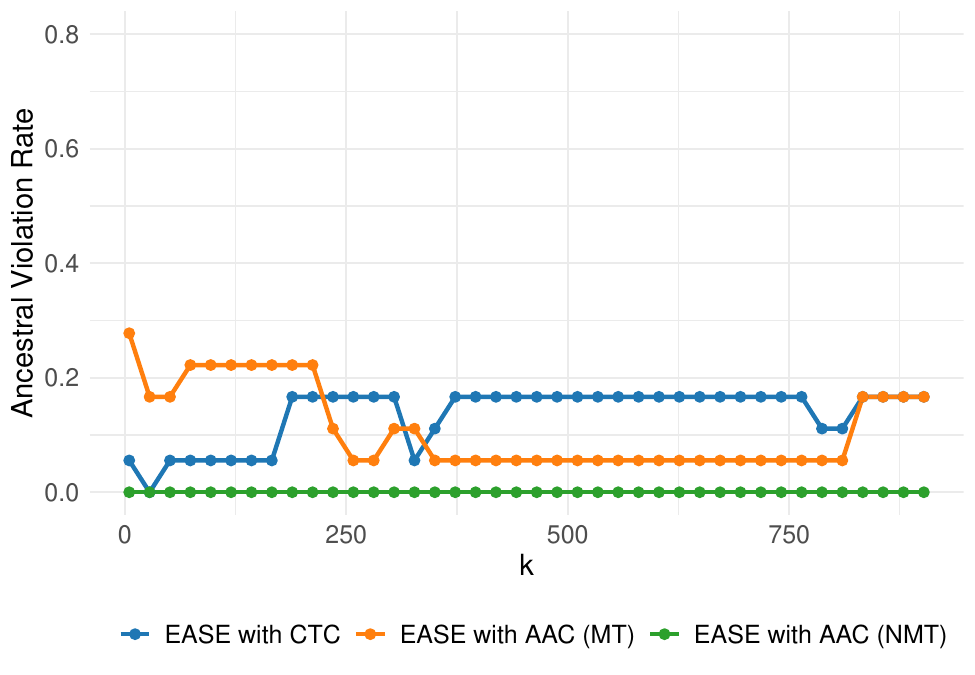}
   \end{minipage}
   \hfill
   \begin{minipage}[t]{0.49\textwidth}
     \centering
\includegraphics[width=\linewidth]{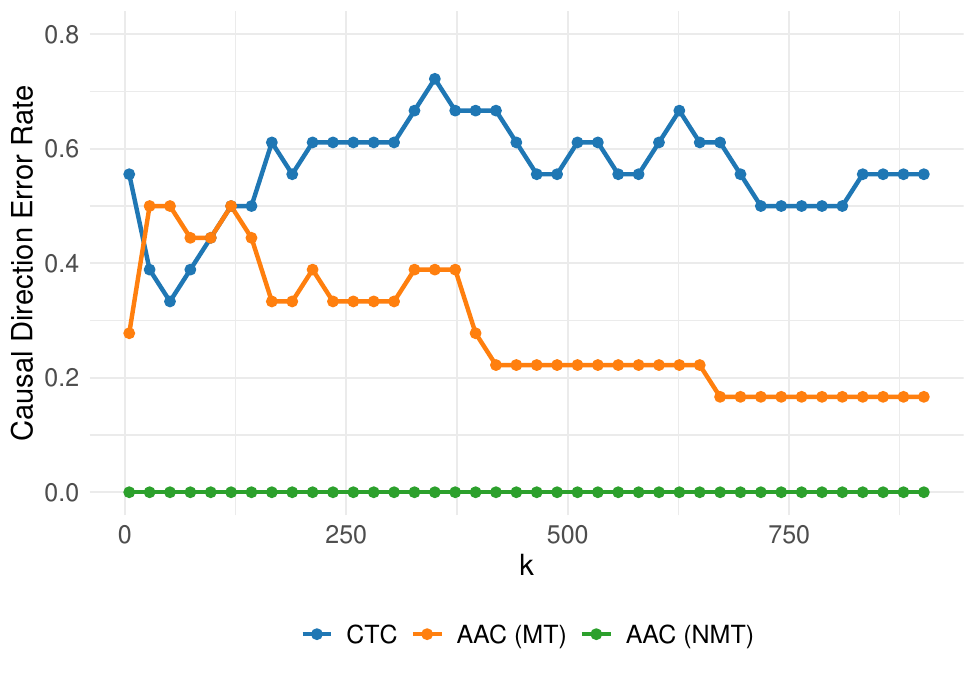}
   \end{minipage}
  \caption{\footnotesize{Left: ancestral violation rate for river discharge data. Right: pairwise causal direction identification error rate for river discharge data. 
  CTC: causal tail coefficient. AAC: angular asymmetry coefficient. MT: marginally transformed, NMT: not marginally transformed.}}\label{fig:river}
 \end{figure}

Furthermore, for all 18 pairs of station nodes connected by a directed path (i.e., river flow), evaluate the accuracy with which AAC and CTC predict the true flow direction. This pairwise decision is more challenging than the discovery of causal order via Algorithm~\ref{Alg:EASE}: the latter exploits ranks and enjoys tolerance for potential errors in pairwise decisions.
Recall that for two nodes \( u \) and \( v \), under the setting of Corollary~\ref{Cor:cause effect 2}, the AAC satisfies \( \tau(u,v) > 0 > \tau(v,u) \) if \( u \) causes \( v \), with \( \tau(v,u) = -\tau(u,v) \). Meanwhile, for the CTC, \( \Gamma_{uv} \), Table~1 of \cite{gnecco2021causal} shows that \( \Gamma_{uv} > \Gamma_{vu} \) when \( u \) causes \( v \). 

Applying this rationale to predict flow directions yields the results shown in the right panel of Figure~\ref{fig:river}. The AAC without marginal transformation achieves perfect accuracy across all values of $k$. In comparison, the AAC with marginal transformation and the CTC show similar performance for small $k$, but as $k$ increases, the AAC with marginal transformation stabilizes at a lower error rate than the CTC.

The surprisingly perfect accuracy of the AAC without marginal transformation in both studies may be attributed to the inherent scaling differences in river discharge between upstream and downstream stations. In general, downstream discharge tends to be greater due to accumulated flow, and this magnitude difference is a meaningful signal for causal direction. Without applying a marginal transformation, the AAC retains this scale information, allowing the angular support $[a,b]$ to tilt toward the downstream variable, thus improving the accuracy of direction inference. However, marginal transformations normalize the data and may remove such valuable cues, leading to less stable performance.


\subsection{\texttt{CauseEffectPairs} benchmark}\label{sec:pair benchmark}
In this section, 
we apply Algorithm \ref{Alg:EASE} to the case $d=2$. This means that given 2 variables, we simply use the sign of estimated AAC  $\tau$ to identify which is the cause and which is the effect, as summarized in Figure \ref{fig:tau}. We shall test this out on the benchmark data
\texttt{CauseEffectPairs}  
\citep{mooij2016distinguishing}, which consists of real-life data pairs,  say each  of the form $(x_{1,i},x_{2,i})_{i=1}^{n}$,  where the ground truth of causal directions is provided. Here, we selected $94$ data sets out of the 108 available, excluding the categorical ones and the ones where $x_{1,i}$ or $x_{2,i}$ is vector-valued. 
Since it is possible that the causal relationship may manifest in different combinations of extremal directions, we shall consider the following 4 different combinations: $(z_{1,i},z_{2,i})=(x_{1,i},x_{2,i})$, $(-x_{1,i},x_{2,i})$, $(x_{1,i},-x_{2,i})$ or $(-x_{1,i},-x_{2,i})$. For each case, we then apply the same marginal transform as in Section \ref{sec:sim}. The extremal subsample size $k$ used for estimation of AAC $\tau$ is decided by $k= \max (1.2 \sqrt{n},15)$ (rounded to the nearest integer), and the penalty parameter in \eqref{eq:g_n(s,t)} is tested for $\lambda\in\{0.5,1,2,3\}$.   As the sample size varies across datasets, we select $m$ in \eqref{est_CDF_semi_para} via a multiple-threshold goodness-of-fit procedure for the generalized Pareto distribution. Specifically, we employ the sequential testing procedure implemented in the \texttt{gpdSeqTests} function of the \texttt{R} package \texttt{eva} \cite{eva_package}, and define $m$ as the largest index such that the StrongStop-adjusted $p$-values are no greater than $0.05$. In addition, we set $m = 15$ whenever the procedure results in a value that is smaller than $15$. The accuracy is calculated by $\sum_{\ell=1}^{96} w_\ell \mbf{1}_{\pc{\text{correct for } \ell\text{th data pair}}}$, where the weights $w_\ell$'s  are supplied by \textsc{CauseEffectPairs} which we re-normalize so that $\sum_{\ell}w_\ell=1$.  

The results are summarized in Figure~\ref{fig:causeeffectpair}, where $95\%$ confidence intervals are computed using a normal approximation. The observed accuracies indicate that the AACs, particularly along the direction $(-x_{1,i}, -x_{2,i})$, tend to align with the true causal direction to some extent, although only a small fraction of cases achieve significance at the $5\%$ level across all four directions. The performance along the direction $(-x_{1,i}, -x_{2,i})$ is comparable to the accuracy of $63\% \pm 10\%$ (based on $100$ datasets) reported for the ANM-pHSIC method in \cite{mooij2016distinguishing}. Note that some combinations of extremal directions may not exhibit any causal signal. In such cases, the AAC sign may perform no better than random guessing. For instance, this occurs when the true causal association between $(x_{1,i}, x_{2,i})$ is positive, but we examine the negative extremal association by considering $(x_{1,i}, -x_{2,i})$ or $(-x_{1,i}, x_{2,i})$ instead.

\begin{figure}[ht]
\centering

\begin{minipage}[t]{0.49\textwidth}
\centering
\includegraphics[width=\linewidth]{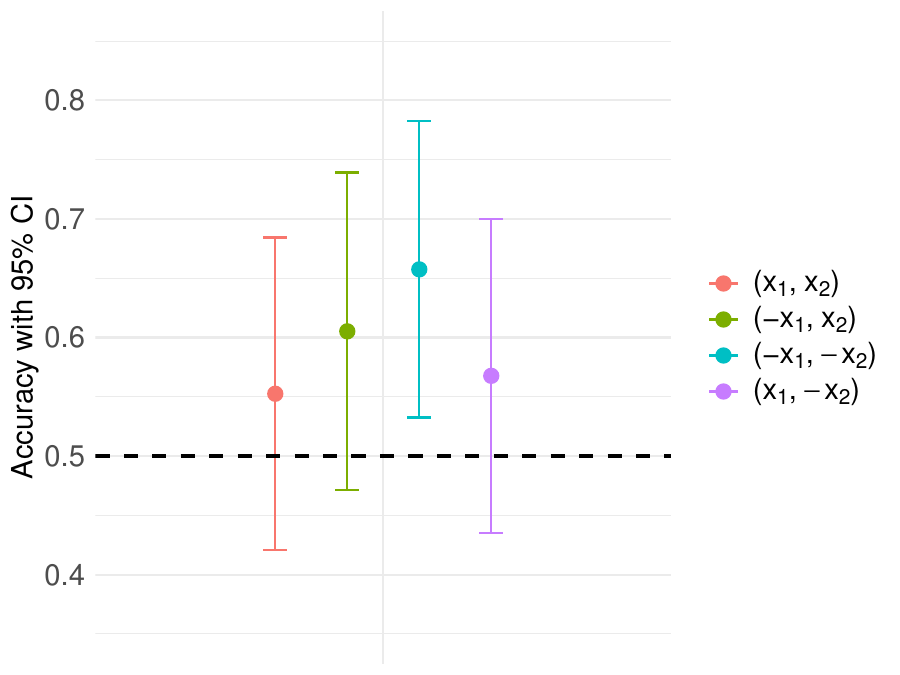}
\\[0.1cm]
\hspace{-1cm} $\lambda = 0.5$
\end{minipage}
\hfill
\begin{minipage}[t]{0.49\textwidth}
\centering
\includegraphics[width=\linewidth]{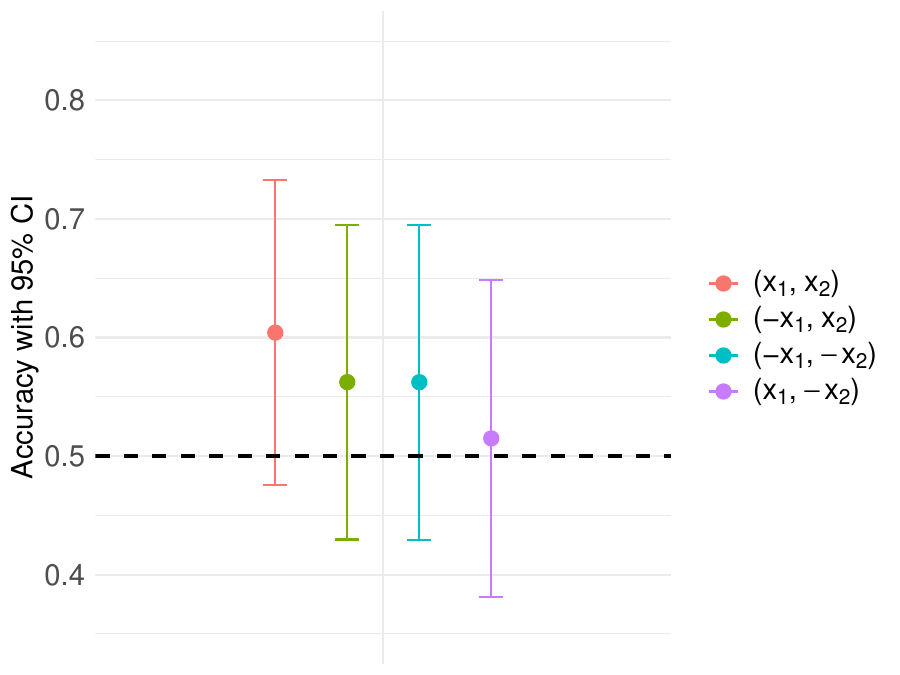}
\\[0.1cm]
\hspace{-1cm} $\lambda = 1$
\end{minipage}

\vspace{0.3cm}

\begin{minipage}[t]{0.49\textwidth}
\centering
\includegraphics[width=\linewidth]{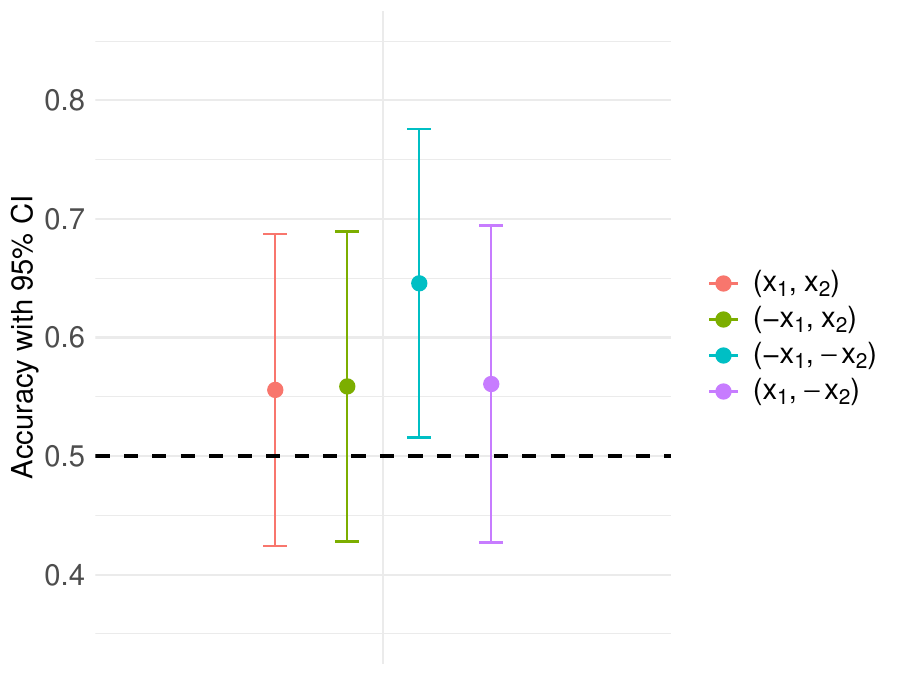}
\\[0.1cm]
\hspace{-1cm} $\lambda = 2$
\end{minipage}
\hfill
\begin{minipage}[t]{0.49\textwidth}
\centering
\includegraphics[width=\linewidth]{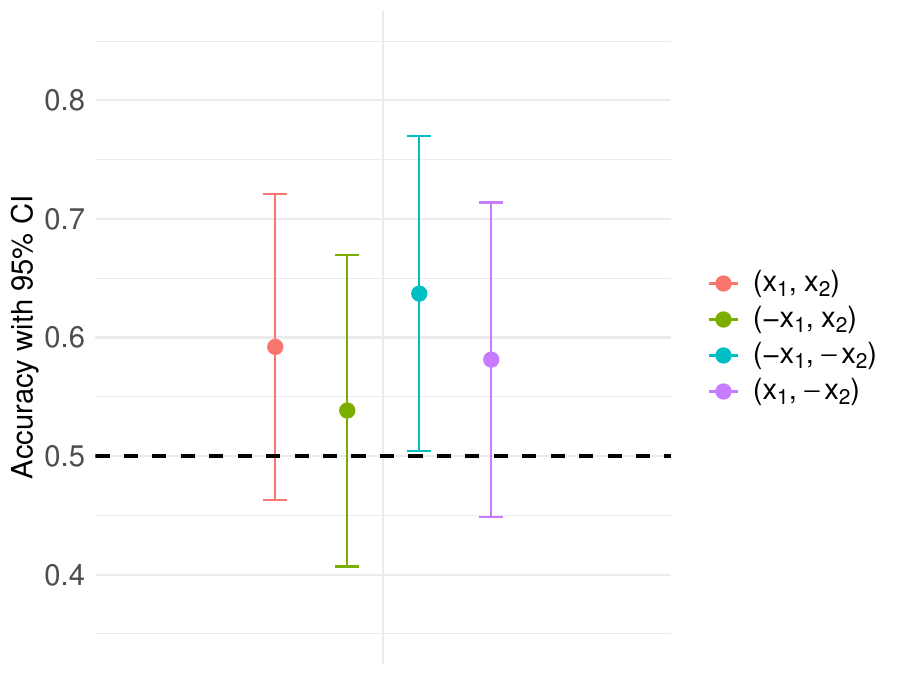}
\\[0.1cm]
\hspace{-1cm} $\lambda = 3$
\end{minipage}

\caption{\footnotesize Accuracy of causal direction identification in four extremal directions.}
\label{fig:causeeffectpair}
\end{figure}
\section{Summary}

In this paper, we propose a novel class of structural causal models for analyzing extreme values, the extremal structural causal models (eSCMs). Unlike classical SCMs, which model randomness via probability distributions, eSCMs are driven by exponent measures, infinite-mass measures that naturally arise in multivariate extreme value theory under multivariate regular variation. While eSCMs do not directly model the data-generating process, they capture asymptotic causal relationships among extreme values.

We show that eSCMs satisfy a well-defined causal Markov property based on extremal conditional independence, extending the link between structural equations and directed graphical models to the domain of extremes. We also identify a fundamental causal asymmetry inherent in the eSCM structure. Exploiting this asymmetry, we develop a consistent causal discovery algorithm tailored to the geometric and probabilistic features of extreme value behavior. 

We believe the eSCM framework offers a promising foundation for future research on causality in extreme values. Potential directions include: i) extending eSCMs to $\bb{R}^d\setminus \{\mbf{0}\}$ to handle two-sided extremes; ii) developing a comprehensive theory for interventional and counterfactual interpretations; and iii) designing statistical methods that leverage the extremal Markov property for causal discovery.

\begin{acks}[Acknowledgments]
Shuyang Bai and Fei Fang contributed equally to this work and are co-first authors. Tiandong Wang is the corresponding author. The authors also thank Sebastian Engelke and Johan Segers for helpful discussions.
\end{acks}

\begin{funding}
T. Wang gratefully acknowledges the National Key R\&D Program of China (No. 2025YFA1016503) and the National Natural Science Foundation of China Grant 12301660. 
\end{funding}

\bibliographystyle{imsart-number} 
\bibliography{reference}

\newpage
\setcounter{page}{1}


\noindent\textbf{Supplement to ``Structural Causal Models for Extremes:  an   Approach Based on Exponent Measures''}

\renewcommand{\thesection}{S.\arabic{section}}
\setcounter{section}{0}

\section{Additional numerical results}\label{append:sim}

\subsection{Additional simulation results.}

In this section, we provide additional simulation results that complement those presented in Section~\ref{sec:sim}. Specifically, we vary the tail parameter $\alpha_0$ of the $\zeta_v$ variables of the models \eqref{eq:sum_linear sim} and \eqref{eq:max_linear sim}, setting $\alpha_0 = 1$ and $\alpha_0 = 5$. In Section~\ref{sec:sim}, the results correspond to   $\alpha_0=3$.

\begin{table}[ht]
\centering
\caption{\footnotesize{Simulation study with $\alpha_0=1$. Each numerical result is in the form of average ancestral violation rate across $500$ simulation instances.}}
\begin{scriptsize}
\begin{tabular}{c
                c
                @{\hskip 0.4cm}l
                @{\hskip 0.3cm}l
                @{\hskip 0.4cm}l
                @{\hskip 0.3cm}l
                @{\hskip 0.4cm}l
                @{\hskip 0.3cm}l
                @{\hskip 0.4cm}l
                @{\hskip 0.3cm}l}
\toprule
$d$ & $k$ & \multicolumn{2}{c}{SL0} & \multicolumn{2}{c}{ML0} & \multicolumn{2}{c}{SL1} & \multicolumn{2}{c}{ML1} \\
\cmidrule(lr){3-4} \cmidrule(lr){5-6} \cmidrule(lr){7-8} \cmidrule(lr){9-10}
    &     & AAC & CTC & AAC & CTC & AAC & CTC & AAC & CTC \\
\midrule
\multirow{3}{*}{5}
& 50  & 0.0019 & {0.0002}* & 0.0021 & {0.0018}* & 0.0017 & {0.0001}* & {0.0019}* & 0.0022 \\
& 100 & {0.0001}* & 0.0010 & {0.0002}* & 0.0115 & {0.0000}* & 0.0014 & {0.0000}* & 0.0141 \\
& 150 & {0.0000}* & 0.0028 & {0.0002}* & 0.0215 & {0.0000}* & 0.0064 & {0.0000}* & 0.0277 \\
\midrule
\multirow{3}{*}{10}
& 50  & 0.0016 & {0.0007}* & {0.0012}* & 0.0047 & 0.0030 & {0.0004}* & 0.0032 & {0.0042}* \\
& 100 & {0.0006}* & 0.0057 & {0.0006}* & 0.0248 & {0.0016}* & 0.0051 & {0.0015}* & 0.0252 \\
& 150 & {0.0005}* & 0.0134 & {0.0005}* & 0.0521 & {0.0015}* & 0.0156 & {0.0013}* & 0.0512 \\
\midrule
\multirow{3}{*}{15}
& 50  & 0.0018 & {0.0006}* & {0.0022}* & 0.0050 & {0.0013}* & 0.0011 & {0.0014}* & 0.0064 \\
& 100 & {0.0009}* & 0.0053 & {0.0015}* & 0.0241 & {0.0004}* & 0.0068 & {0.0005}* & 0.0259 \\
& 150 & {0.0008}* & 0.0181 & {0.0012}* & 0.0551 & {0.0003}* & 0.0175 & {0.0004}* & 0.0555 \\
\midrule
\multirow{3}{*}{30}
& 50  & 0.0014 & {0.0030}* & {0.0018}* & 0.0083 & {0.0018}* & 0.0018 & {0.0020}* & 0.0072 \\
& 100 & {0.0011}* & 0.0131 & {0.0013}* & 0.0380 & {0.0011}* & 0.0120 & {0.0014}* & 0.0347 \\
& 150 & {0.0008}* & 0.0324 & {0.0010}* & 0.0720 & {0.0010}* & 0.0303 & {0.0011}* & 0.0678 \\
\hline
\end{tabular} 
\label{tab:mean_CI_new}
\end{scriptsize} 
\end{table}

\begin{table}[ht]
\centering
\caption{\footnotesize{Simulation study with $\alpha_0=5$.   Each numerical result is in the form of average ancestral violation rate across $500$ simulation instances.}}
\begin{scriptsize}
\begin{tabular}{c
                c
                @{\hskip 0.4cm}l
                @{\hskip 0.3cm}l
                @{\hskip 0.4cm}l
                @{\hskip 0.3cm}l
                @{\hskip 0.4cm}l
                @{\hskip 0.3cm}l
                @{\hskip 0.4cm}l
                @{\hskip 0.3cm}l}
\toprule
$d$ & $k$ & \multicolumn{2}{c}{SL0} & \multicolumn{2}{c}{ML0} & \multicolumn{2}{c}{SL1} & \multicolumn{2}{c}{ML1} \\
\cmidrule(lr){3-4} \cmidrule(lr){5-6} \cmidrule(lr){7-8} \cmidrule(lr){9-10}
    &     & AAC & CTC & AAC & CTC & AAC & CTC & AAC & CTC \\
\midrule
\multirow{3}{*}{5}
& 50  & 0.0543 & {0.0188}* & {0.3082}* & 0.4235 & 0.0440 & {0.0225}* & {0.2860}* & 0.4424 \\
& 100 & 0.0500 & {0.0294}* & {0.3042}* & 0.4451 & 0.0419 & {0.0364}* & {0.2871}* & 0.4491 \\
& 150 & 0.0474 & {0.0450}* & {0.3035}* & 0.4651 & {0.0406}* & 0.0509 & {0.2852}* & 0.4764 \\
\midrule
\multirow{3}{*}{10}
& 50  & 0.0930 & {0.0403}* & {0.3585}* & 0.4641 & 0.0964 & {0.0411}* & {0.3685}* & 0.4489 \\
& 100 & 0.0873 & {0.0649}* & {0.3565}* & 0.4832 & 0.0884 & {0.0676}* & {0.3668}* & 0.4742 \\
& 150 & {0.0856}* & 0.0960 & {0.3558}* & 0.4788 & {0.0862}* & 0.0956 & {0.3651}* & 0.4676 \\
\midrule
\multirow{3}{*}{15}
& 50  & 0.1108 & {0.0480}* & {0.3808}* & 0.4788 & 0.1095 & {0.0465}* & {0.3666}* & 0.4636 \\
& 100 & 0.1063 & {0.0758}* & {0.3767}* & 0.4746 & 0.1063 & {0.0791}* & {0.3613}* & 0.4793 \\
& 150 & {0.1033}* & 0.1061 & {0.3740}* & 0.4847 & {0.1041}* & 0.1118 & {0.3635}* & 0.4850 \\
\midrule
\multirow{3}{*}{30}
& 50  & 0.1624 & {0.0624}* & {0.4111}* & 0.4776 & 0.1608 & {0.0676}* & {0.4165}* & 0.4849 \\
& 100 & 0.1529 & {0.0995}* & {0.4092}* & 0.4872 & 0.1538 & {0.0974}* & {0.4169}* & 0.4879 \\
& 150 & 0.1484 & {0.1351}* & {0.4095}* & 0.4904 & 0.1494 & {0.1338}* & {0.4177}* & 0.4852 \\
\hline
\end{tabular}
\label{tab:mean_CI_n1000_shape_5_strong_sig}
\end{scriptsize} 
\end{table}

\section{Proofs and technical discussions}
\subsection{Proof of Proposition \ref{Pro:basic}}\label{sec:pf Pro:basic}
We use $\E_{\sbf{\theta}} $ to denote integration (taking expectation) with respect to $\Prt_{\sbf{\theta}}$.
In view of $\mu\pp{ \pp{\boldsymbol{\eta},\boldsymbol{\theta} }\in \cdot }=(\Lambda^{\perp}\otimes \Prt_{\sbf{\theta}})(\cdot)$, we have by measure-theoretic change of variable \cite[Lemma 1.24]{kallenberg:2021:foundations} and Fubini's theorem that
\begin{align}
    &\mu(Y_v>1) = \E_{\sbf{\theta}} \pb{ \int_{\bb{E}_V }    \mbf{1}_{\pc{ F_{\cl{A}(v)}\pp{  \mbf{X}_{\mrm{An}(v)}, \sbf{\theta}_{\mrm{An}(v)} }>1}}  \Lambda^{\perp}(\mbf{x})}=\notag\\
&\sum_{u\in \An(v)}  s(-\alpha) \E_{\sbf{\theta}}\pb{  \int_0^\infty     \mbf{1}_{\pc{   F_{\cl{A}(v)}\pp{  \pp{x\mbf{1}_{\{w=u\}}}_{w\in \mrm{An}(v)}, \sbf{\theta}_{\mrm{An}(v)} }>1}} x^{-\alpha-1} dx}, \label{eq:mu(Yv>1) decomp}
\end{align}
where in the last equality we have used    the fact that $\Lambda^{\perp}$ is supported on the coordinate axes $\bb{A}_V$ and \eqref{eq:Lambda_perp}. Then by the homogeneity of $F_{\cl{A}(v)}(\cdot,\sbf{\theta}_{\mrm{An}(v)})$ implied by Condition 1 of Definition \ref{def:eSCM},  we have   $F_{\cl{A}(v)}\pp{  \pp{x\mbf{1}_{\{w=u\}}}_{w\in \mrm{An}(v)}, \sbf{\theta}_{\mrm{An}(v)} } =x F_{\cl{A}(v)}\pp{  \pp{\mbf{1}_{\{w=u\}}}_{w\in \mrm{An}(v)}, \sbf{\theta}_{\mrm{An}(v)} } $ for all $x>0$ and $\sbf{\theta}_{\mrm{An}(v)}\in [0,1]^{\mrm{An}(v)}$. The relation \eqref{eq:s_v moment} then follows from  substituting this relation into \eqref{eq:mu(Yv>1) decomp} and the fact that $\int_{0}^\infty \mbf{1}_{\{a x>1 \}} (-\alpha)x^{-\alpha-1}dx=a^\alpha$ for $a\ge 0$.

For the second claim, the relation \eqref{eq:Lambda marginal} with $\Lambda=\cl{L}(\mbf{Y})$ follows readily from Condition 2 of Definition \ref{def:eSCM}.  To verify \eqref{eq:Lambda homo},  it suffices to show for any Borel $B\in \bb{E}_V$   that
$
  \mu( \mbf{Y}\in c B ) = c^{-\alpha}   \mu( \mbf{Y}\in  B )$,  $c\in (0,\infty)$.
To show this, we have similarly as above that
\[
  \mu( \mbf{Y}\in c B ) =\E_{\sbf{\theta}} \pb{ \int_{\bb{E}_V }    \mbf{1}_{\pc{ \mbf{F}_{\cl{G}}\pp{ c^{-1} \mbf{X}, \sbf{\theta} }\in  B}}  \Lambda^{\perp}(\mbf{x})},
\] 
where we have used the homogeneity of $\mbf{F}_{\cl{G}}(\cdot,\sbf{\theta})$ implied by Condition 1 of Definition \ref{def:eSCM}. The desirable relation then follows from the homogeneity property $\Lambda^{\perp}(\cdot) =c^{-\alpha} \Lambda^{\perp}(c^{-1}\cdot)$ and a change of variable.

Now we prove the last claim. Since all norms are equivalent on a finite-dimensional space,  for convenience, we assume $\|\cdot\| = \|\cdot\|_\infty$. Then applying the sufficient condition in the proposition,
we claim that 
\begin{equation}\label{eq:Y_v upper bound recur}
Y_v=a_v\eta_v+ h_v\pp{ \mbf{Y}_{\pa(v)},\theta_v }\le C_v^*(\theta_v) \|\pp{\eta_v,\mbf{Y}_{\pa(v)}}\|_\infty,   \quad \mu-a.e.
\end{equation}
for some $C_v^*(\theta_v)\ge 0$ with 
\begin{equation}\label{eq:C_v^* alpha mom}
\E_{\sbf{\theta}} |C_v^*(\theta_v)|^{\alpha}<\infty.
\end{equation}
To see this, it suffices to take $C_v^*(\theta_v)=a_v\vee C_v(\theta_v)$, where $C_v(\theta_v)$ is as in the assumption, and $a_v$ is the activation coefficient, and to note that $\mu\left(\eta_v>0, \mbf{Y}_{\pa(v)}\neq \mbf{0}_{\pa(v)} \right)=0$. Then, by a recursion of  \eqref{eq:Y_v upper bound recur} tracing back through ancestral relations, we have
\begin{equation} 
Y_v \le C_{\An(v)}^*\pp{ \sbf{\theta}_{\An(v)}}\|\sbf{\eta}_{\An(v)}\|_\infty \quad \mu-a.e.,
\end{equation}
where $ C_{\An(v)}^*\pp{ \sbf{\theta}_{\An(v)}}:= \pp{ \prod_{u\in \An(v)}C_u^*(\theta_u)}  $ satisfies $\E_{\sbf{\theta}}\pb{C_{\An(v)}^*\pp{ \sbf{\theta}_{\An(v)}}^\alpha}<\infty$  due to \eqref{eq:C_v^* alpha mom} and independence of $\theta_u$'s. So applying Fubini similarly as above and the single-activation nature of $\eta_u$'s,
\begin{align*}
\mu(Y_v>1)&\le  \E_{\sbf{\theta}}\pb{C_{\An(v)}^*\pp{ \sbf{\theta}_{\An(v)}}^\alpha} \sum_{u\in\An(v)}\mu\left( \eta_u>1\right) \\&= s |\An(v)| \E_{\sbf{\theta}}\pb{C_{\An(v)}^*\pp{ \sbf{\theta}_{\An(v)}}^\alpha} 
<\infty.
\end{align*}

\subsection{Generalized Pareto representation for the law of H\"usler-Reiss eSCM}\label{sec:pf mult Pareto HR}

Throughout the discussion, we assume $\alpha = 1$ for convenience of comparison with the literature. This does not entail a loss of generality, as the case $\alpha \neq 1$ can be easily reduced to $\alpha = 1$ via a transformation.

Following Example \ref{Eg:HR eSCM}, suppose node $1$ is the unique root node and the associated activation coefficient $a_1>0$, and $a_v=0$ for $v\in \de(1)=\{2,\ldots,d\}$.    Let the matrix $B= \pp{b_{uw} }_{u,w\in V}$, where $u$ indexes rows and $w$ indexes columns, and $b_{uw}=0$ if $u\notin \pa(w)$. Note that $b_{uw}$ can be negative if $|\pa(w)|\ge 2$. Set $\mbf{W}=(W_u)_{u\in V}:=\pp{\log(Y_u)}_{u\in V}$, and  $\mbf{Z}=\pp{Z_u}_{u\in\{2,\ldots,d\}}$, recalling the latter under $\Prt_\theta$ is a multivariate Gaussian with mean $\sbf{\mu}_{\de(1)}$ and covariance matrix $\Sigma_{\de(1)}=\mathrm{Diag}\pp{\sigma_s^2, s\in \de(1)}$.
In view of \eqref{eq:h_v HR},  under $\{\eta_1>0\}$, the sub-eSCM in \eqref{eq:single eta} in this case can be written as 
$$\mbf{W}  = B^\top \mbf{W} +  \mbf{N},$$
where   $\mbf{N}$ is a $V$-indexed vector with the $1$st component   $\log(a_1\eta_1)$ and $(2,\ldots,d)$-component $\mbf{Z}$.  Note that the $1$st row of $B$ is zero. Following \cite{engelke2025extremes},  one can then re-express the last displayed relation   as
\begin{align}\label{eq:HR eSCM re-exp}
   \mbf{W} =\begin{pmatrix}
    W_1   \\
    \mbf{W}_{\de(1)}
   \end{pmatrix} 
   =\left(I -B^\top\right)^{-1} \mbf{N}
   = \begin{pmatrix}
     \mbf{e}_1^\top   \\
    L 
   \end{pmatrix} 
  \mbf{N} 
=\begin{pmatrix}
   1 &\mbf{0}_{\de(1)}^\top     \\
 \mbf{c} & D 
   \end{pmatrix} 
   \begin{pmatrix}
 \log (a_1 \eta_1)  \\
\mbf{Z} 
\end{pmatrix}
\end{align}
where $\mbf{e}_1=(1,0,\ldots,0)^\top$ is the  coordinate unit vector, $L$ is the $(2,\ldots,d)$-rows of  $\left(I -B^\top\right)^{-1}$ with  $I$ denoting  the identity matrix.  Here, each $c_{u}$ in $\mbf{c}=\pp{c_{u}}_{u\in \{2,\ldots,d\}}$ is the sum of distinct $B$-weighted directed paths (i.e., product of the edge weights in $B$ along a directed path) from node $1$ to node $u$, and 
each $d_{u,w}$ in $D=\pp{d_{u,w}}_{u,w\in \{2,\ldots,d\}}=:\pp{\mbf{d}_u^\top }_{u\in \{2,\ldots,d\}}$ ($u$ index rows)  is  the sum of distinct  $B$-weighted directed paths from node $w$ to node $u$.   

First, we claim that, due to the assumption 
\begin{equation}\label{eq:buw sum to 1}
    \sum_{u\in \pa(w)} b_{uw}=1,\quad w\in \{2,\ldots,d\},
\end{equation} we have 
\begin{equation}\label{eq:c 1 vector}
\mbf{c}=\pp{1,\ldots,1}^\top.
\end{equation}
Indeed, this follows from an induction argument. First, note that
$c_w=b_{1w}=1$ for any child node $w$ of $1$ since node $1$ is its only parent. Now  take $v\in \{2,\ldots,d\}$, and we make an induction assumption that $c_w=1$ for any $w\in \an(v)$.
Since any path from $1$ to $v$ must go through $\pa(v)$, a recursion yields
\[
c_{v}=\sum_{u\in \pa(v)} b_{uv } c_u=\sum_{u\in \pa(v)} b_{uv }=1.
\]

Below,    for a vector $\mbf{v}$, we write $\max(\mbf{v})$ and $\min(\mbf{v})$ to represent its maximum and minimum component value, respectively.  
Let $\mbf{L}$ be a random vector with distribution $\mu\left(\mbf{W}\in \cdot \mid  \max\pp{\mbf{W}} >0, \eta_1>0\right)$. We make the following claim, which will be proved below:  $\mbf{L}$ follows
a multivariate generalized Pareto  distribution
(e.g., \cite{rootzen2006multivariate,rootzen2018multivariate}) that takes value in $\{\mbf{z}\in [-\infty,\infty)^{V}:  \|\mbf{z}\|_\infty>0\}$  with the following stochastic representation:
\begin{equation}\label{eq:mult Pareto HR}
\mbf{L}\EqD    E +\mbf{S}.  
\end{equation}
Here,  $E$ is a standard exponential random variable independent of $\mbf{S}$, and $\mbf{S}$ is a random vector whose distribution is given by
$
\Prt(\mbf{S}\in \cdot ) =\frac{\E\pb{ \mbf{1}_{\{\mbf{U}-\max(\mbf{U})\in  \cdot \}   } \exp(\max(\mbf{U})) }}{ \E \exp(\max(\mbf{U})) },
$
where $\mbf{U}$ has the same distribution as  $\left(0, \pp{\mbf{d}_u^\top \mbf{Z} }_{u\in \{2,\ldots,d\}}^\top\right)^\top$ under $\Prt_{\sbf{\theta}}$,  that is, a  multivariate normal distribution that is degenerately $0$ in  $1$st component,  and  with mean vector $R\sbf{\mu}_{\de(1)}$ and covariance matrix $R\Sigma_{\de(1)}R^\top$ in the $(2,\ldots,d)$-components, where   $R=\pp{\mbf{d}_u^\top  }_{u\in \{2,\ldots,d\}}$  ($u$ indexes rows). 
So $\mbf{L}$ is a H\"usler-Reiss   generalized Pareto distribution in view of \cite[Section 7.2]{kiriliouk2019peaks}. 

\begin{proof}[Proof of the representation \eqref{eq:mult Pareto HR}]
 Set $\xi_1=\log(a_1\eta_1)$. By \eqref{eq:Lambda_perp} and the assumption $\alpha=1$,  we know $\mu(\xi_1>x)= s a_1 e^{-x}$, $x\in (-\infty,\infty)$. Set 
$\mbf{U}=(0 , \pp{\mbf{d}_u^\top \mbf{Z}}_{u\in \{2,\ldots,d\}}^\top)^\top.
$
Below, for two vectors $\mbf{v}_1$ and $\mbf{v}_1$ of the same dimension, we write $\mbf{v}_1\leq \mbf{v}_2$ to mean that the inequality holds component-wise, and write $\mbf{v}_1\not\leq \mbf{v}_2$ to mean the contrary of the previous one (i.e., the inequality fails for least one component).
 In view of \eqref{eq:HR eSCM re-exp} and \eqref{eq:c 1 vector}, one has
\begin{align*}
 &\mu\left(  \max\pp{\mbf{W}} >0 ,\ \eta_1>0\right) = \mu\left(  \max \pp{\pp{ \xi_1, \xi_1 \mbf{c}^\top +\mbf{Z}^\top D^\top }^\top} >0 ,\ \eta_1>0\right) \\=& \mu(\xi_1 > \min( -\mbf{U})  ) =s a_1\E_{\sbf{\theta}}\pb{ \exp\pp{\max(\mbf{U})} }.
\end{align*}
Let $\mbf{x}\in [-\infty,\infty)^{V}$ with $\|\mbf{x}\|_\infty>0$. 
Then
\begin{align*}
  &\mu\left(  \mbf{W}\not\leq \mbf{x},  \max\pp{\mbf{W}} >0,  \eta_1>0\right) = \mu(\xi_1 > \min( -\mbf{U}), \xi_1>\min(\mbf{x}-\mbf{U})  )\\
 = &s a_1  \E_{\sbf{\theta}}\pb{ \exp\pp{\max(\mbf{U})} \wedge \exp\pp{\max(\mbf{U}-\mbf{x})}  }. 
\end{align*}
Therefore, the joint CDF of $\mbf{L}$ is given by 
\begin{align*}
  \Prt\pp{ \mbf{L}  \le \mbf{x} }&=1-   \frac{\mu\left(  \mbf{W} \not\leq \mbf{x},  \max\pp{\mbf{W}} >0,  \eta_1>0\right)}{\mu\left(  \max\pp{\mbf{W}} >0 , \eta_1>0\right)}\\
  &= 1- \frac{\E_{\sbf{\theta}}\pb{ \exp\pp{\max(\mbf{U})} \wedge \exp\pp{\max(\mbf{U}-\mbf{x})}  }}{\E_{\sbf{\theta}}\pb{ \exp\pp{\max(\mbf{U})}  } }.
\end{align*}
The conclusion then follows from  \cite[Theorem 7 \& Proposition 9]{rootzen2018multivariate} (there seems to be a typo in \cite[Eq.(30)]{rootzen2018multivariate}, in which the maximum sign $\vee$ should be replaced by a minimum sign $\wedge$ as the last formula displayed above).
\end{proof}

\subsection{Proof of Theorem \ref{Thm:limit eSCM}}

 

The strategy is inspired by the proof of \cite[Theorem 1]{engelke2025extremes}.
To prove the homogeneity of $f_v^*$, suppose $c>0$ and fix $\theta\in[0,1]$. Let    $\mbf{x}_{\pa(v)}(t)\rightarrow \mbf{y}_{\pa(v)}$ within $[0,\infty)^{\pa(v)}$ and $\zeta(t)\rightarrow\eta$ within $[0,\infty)$  as $t\rightarrow\infty$ with $t\in (0,\infty)$. Then using the asymptotic homogeneity of $g_v$, we have
\begin{align*}
 f_v^*\pp{ c \mbf{y}_{\pa(v)}, c\eta_v, \theta  }=\lim_{t\rightarrow\infty} c (ct)^{-1}  g_v\pp{ c t \mbf{x}_{\pa(v)}(t),  ct \zeta_v(t), \theta  }  = c f_v^*\pp{  \mbf{y}_{\pa(v)}, \eta_v, \theta  }.
\end{align*}
The relation also holds when $c=0$ by the assumption $f_v^*\pp{ \mbf{0}_{\pa(v)}, \mbf{0}, \theta  }=0$ for any $\theta\in [0,1]$.

Now we proceed to prove the second claim.
By a recursion of \eqref{eq:pre limit SCM} similarly as \eqref{eq:Y=F(eta,theta)}, one may express 
\begin{equation}
    \mbf{X}=\pp{X_v}_{v\in V}=\mbf{G}_{\cl{G}}\pp{\sbf{\zeta},\sbf{\theta}}:= \pp{ G_{\cl{A}(v)}\pp{ \sbf{\zeta}_{\An(v)}, \sbf{\theta}_{\An(v)}  } }_{v\in V}
\end{equation}
for some measurable functions $G_{\cl{A}(v)}: [0,\infty)^{|\An(v)|}\times [0,1]^{|\An(v)|}\mapsto [0,\infty)$, $v\in V$.  Next, we observe that in view of the  asymptotic homogeneity property imposed on each $g_v$ in \eqref{eq:pre limit SCM} in the first assumption of the theorem,   for any fixed $\sbf{\theta}_{\An(v)}\in [0,1]^{\An(v)}$, the function $G_{\cl{A}(v)}\pp{ \cdot , \sbf{\theta}_{\An(v)}  }$ is asymptotically homogeneous as well, that is, 
\begin{equation}\label{eq:asymp homo G_A}
\lim_{t\rightarrow\infty}  t^{-1} G_{\cl{A}(v)}\pp{ t \mbf{x}(t) , \sbf{\theta}_{\An(v)} } = F_{\cl{A}(v)}^*\pp{ \mbf{x}, \sbf{\theta}_{\An(v)}}
\end{equation}
for any $\mbf{x}(t)\rightarrow \mbf{x}$ within $[0,\infty)^{\An(v)}$ as $t\rightarrow\infty$, where $F_{\cl{A}(v)}^*$ is as defined as ${F}_{\cl{A}(v)}$ in \eqref{eq:Y=F(eta,theta)} but  with $f_v$ replaced by $f_v^*$.

Take a Borel $B\subset \bb{E}_V$ that is separated from the origin (i.e., the closure of $B$ in $[0,\infty)^V$ does not intersect  the origin) such that $\mu\left( \mbf{Y} \in \partial B\right)=0$, and $\epsilon>0$. Assume without loss of generality that $\|\cdot\|=\|\cdot\|_\infty$. We have 
\begin{align*}
t \Pr\pp{t^{-1/\alpha} \mbf{X}\in B   }=   & t  \Pr\pp{t^{-1/\alpha} \mbf{X}\in B , t^{-1/\alpha} \zeta_v> \epsilon \text{ for some }v\in V }\\+ &t  \Pr\pp{t^{-1/\alpha} \mbf{X}\in B , t^{-1/\alpha} \|\sbf{\zeta}\|_\infty\le \epsilon}
\end{align*}
Note that
\begin{align*}
&\sum_{v\in V} t  \Pr\pp{t^{-1/\alpha} \mbf{X}\in B , t^{-1/\alpha} \zeta_v> \epsilon} - \sum_{u,v\in V, u\neq v} t  \Pr\pp{t^{-1/\alpha} \zeta_u> \epsilon , t^{-1/\alpha} \zeta_v> \epsilon}  \\\le  & t  \Pr\pp{t^{-1/\alpha} \mbf{X}\in B , t^{-1/\alpha} \zeta_v> \epsilon \text{ for some }v\in V }\\\le  &\sum_{v\in V} t  \Pr\pp{t^{-1/\alpha} \mbf{X}\in B , t^{-1/\alpha} \zeta_v> \epsilon},
\end{align*}
as well as the limit relations $\lim_{t\rightarrow\infty}t  \Pr\pp{t^{-1/\alpha} \zeta_u> \epsilon , t^{-1/\alpha} \zeta_v> \epsilon} =0$ for $u\neq v$ due to extremal independence, $\lim_{t\rightarrow\infty}t\Prt\pp{ t^{-1/\alpha}\zeta_v>\epsilon }=s \epsilon^{-\alpha}=\mu\pp{\eta_v>\epsilon}$, and $\lim_{\epsilon\downarrow 0} \sum_{v\in V}\mu(\mbf{Y}\in B, \eta_v>\epsilon)=\mu(\mbf{Y}\in B)$.
Combining these relations, in order to show $\lim_{t\rightarrow\infty}t \Pr\pp{t^{-1/\alpha} \mbf{X}\in B   } = \mu\pp{\mbf{Y}\in B}$, it suffices to show  for each $v\in V$ that
\begin{align}\label{eq:limit eSCM goal 1}
\lim_{t\rightarrow\infty } \Pr\pp{t^{-1/\alpha} \mbf{X}\in B \mid t^{-1/\alpha} \zeta_v> \epsilon}=\mu\pp{\mbf{Y}\in B\mid \eta_v>\epsilon},
\end{align}
and  
\begin{align}\label{eq:limit eSCM goal 2}
\lim_{\epsilon\downarrow 0}\limsup_{t\rightarrow\infty } t  \Pr\pp{t^{-1/\alpha} \mbf{X}\in B , t^{-1/\alpha} \|\sbf{\zeta}\|_\infty\le \epsilon}=0.
\end{align}

We first prove \eqref{eq:limit eSCM goal 1}, for which it suffices to show the weak convergence of the conditional law $\cl{L}\pp{t^{-1/\alpha} \mbf{X} \mid  t^{-1/\alpha}\zeta_v>\epsilon}$ toward $\cl{L}\pp{\mbf{Y} \mid \eta_v>\epsilon}$ on $[0,\infty)^V$ as $t\rightarrow\infty$. Suppose $H:[0,\infty)^V\mapsto \bb{R}$ is   bounded and continuous. Let $\mbf{F}^*_{\cl{G}}$ be defined as $\mbf{F}_{\cl{G}}$ in \eqref{eq:Y=F(eta,theta)} but with $F_{\cl{A}(v)}$ replaced by ${F}_{\cl{A}(v)}^*$ in \eqref{eq:asymp homo G_A}.   To prove the aforementioned weak convergence, due to independence and Fubini, it suffices to show that
\begin{equation}\label{eq:eSCM limit goal 1 red}
\lim_{t\rightarrow\infty}  \E_{\mid \zeta_v}^{(t)}  \E_{\sbf{\theta}}  H\pp{ t^{-1/\alpha} \mbf{G}_{\cl{G}}\pp{t^{1/\alpha} \cdot  t^{-1/\alpha} \sbf{\zeta}, \sbf{\theta}  } }       =   \E_{\mid \eta_v} \E_{\sbf{\theta}}  H\pp{  \mbf{F}^*_{\cl{G}}\pp{\sbf{\eta}, \sbf{\theta}  } },   
\end{equation} 
where we lightly abuse the notation to use $\E_{\sbf{\theta}}$ to denote expectation with respect to the uniform random vector $\sbf{\theta}$ in   both contexts of SCM $\mbf{X}$ and eSCM $\mbf{Y}$,   to use $\E_{\mid \sbf{\zeta}}^{(t)}$ to denote the expectation with respect to the conditional law $\cl{L}\pp{  t^{-1/\alpha}\sbf{\zeta}  \mid  t^{-1/\alpha}\zeta_v>\epsilon }$,  and to use $\E_{\mid\eta_v}$ to denote the expectation with respect to the conditional law $\cl{L}(\sbf{\eta}\mid  \eta_v>\epsilon)$. Recall $\eta_v>0$ implies $\eta_u=0$ for $u\neq v$.
Set
\begin{align*}
  \wt{H}_t:  [0,\infty)^\mbf{V}\mapsto [0,\infty), \quad  \wt{H}_t(\mbf{x})=  \E_{\sbf{\theta}} \pb{ H\pp{ t^{-1/\alpha} \mbf{G}_{\cl{G}}\pp{t^{1/\alpha} \mbf{x} , \sbf{\theta}  } }  }
\end{align*}
and
\begin{align*}
  \wt{H}:  [0,\infty)^\mbf{V}\mapsto [0,\infty), \quad  \wt{H}(\mbf{x})=  \E_{\sbf{\theta}} \pb{  H\pp{  \mbf{F}^*_{\cl{G}}\pp{\mbf{x}, \sbf{\theta}  } }}.
\end{align*}
Since $H$ is bounded, by uniform integrability, to show \eqref{eq:eSCM limit goal 1 red}, it suffices to show
\begin{equation}\label{eq:H tilde t}
\wt{H}_t\pp{ \mbf{Z}_t }\ConvD  \wt{H}\pp{ \mbf{Z} } 
\end{equation}
as $t\rightarrow\infty$, where $\mbf{Z}_t\EqD \cl{L}\pp{  t^{-1/\alpha}\sbf{\zeta}  \mid  t^{-1/\alpha}\zeta_v>\epsilon }$ and $\mbf{Z}\EqD \cl{L}(\sbf{\eta}\mid  \eta_v>\epsilon)$.
 Note that  due to boundedness and continuity of $H$,   the aforementioned asymptotic homogeneity of each component of $\mbf{G}_{\cl{G}}(\cdot ,\sbf{\theta})$ for each $\sbf{\theta}\in [0,1]^V$ fixed, and the dominated convergence theorem, we have for any $\mbf{x}(t)\rightarrow \mbf{x}$ within $[0,\infty)^V$ that $\wt{H}_t(\mbf{x}(t))\rightarrow \wt{H}(\mbf{x})$ as $t\rightarrow\infty$. So \eqref{eq:H tilde t} follows from the extended continuous mapping theorem (e.g., \cite[Theorem 5.27]{kallenberg:2021:foundations}). Therefore, the relation \eqref{eq:limit eSCM goal 1} is concluded.

Next, we prove \eqref{eq:limit eSCM goal 2}. Applying the second assumption in the theorem recursively, we have  
\begin{equation}\label{eq:X_v bound theta}
X_v=G_{\cl{A}(v)}\pp{ \sbf{\zeta}_{\An(v)} , \sbf{\theta}_{\An(v)}  }\le C_{\An(v)}\pp{ \sbf{\theta}_{\An(v)}}\|\sbf{\zeta}_{\An(v)}\|_\infty \quad a.s.
\end{equation}
for some measurable $ C_{\An(v)}:[0,1]^{\An(v)}\mapsto [0,\infty)$ with $\E \pb{C_{\An(v)}\pp{ \sbf{\theta}_{\An(v)}}^\alpha}<\infty$. The last relation holds since   $C_{\An(v)}(\sbf{\theta}_{\An(v)})$ is a multiplication of distinct (thus independent) $C_u(\theta_u)$'s with $u\in \An(v)$, and each  $\E \pb{ C_u(\theta_u)^\alpha}<\infty$ by the second assumption.   Since $B$ in \eqref{eq:limit eSCM goal 2} is separated from the origin,  we have $\delta:=\inf\{\|\mbf{x}\|_\infty: \mbf{x}\in B \}>0$. Therefore, by \eqref{eq:X_v bound theta} and the fact that $\|\sbf{\zeta}_{\An(v)}\|_\infty\le \|\sbf{\zeta}\|_\infty$, we have
\begin{align*}
   & t  \Pr\pp{t^{-1/\alpha} \mbf{X}\in B ,\ t^{-1/\alpha} \|\sbf{\zeta}\|_\infty\le \epsilon} \le     t  \Pr\pp{ t^{-1/\alpha} \|\mbf{X}\|_\infty \ge \delta ,\ t^{-1/\alpha} \|\sbf{\zeta}\|_\infty\le \epsilon}\notag \\
  \le &  \sum_{v\in V} t\Prt\pp{ t^{-1/\alpha} C_{\An(v)}\pp{ \sbf{\theta}_{\An(v)}}\|\sbf{\zeta}\|_\infty\ge \delta,\   t^{-1/\alpha} \|\sbf{\zeta}\|_\infty\le \epsilon }.
\end{align*}
By \eqref{eq:zeta asymp indep} and \cite[Proposition 2.1.12]{kulik2020heavy},  recalling $d=|V|$, we have for any $x>0$ that
\begin{equation}\label{eq:zeta norm RV}
\lim_{t\rightarrow\infty}t\Prt\pp{   t^{-1/\alpha} \|\sbf{\zeta}\|_\infty \ge x  }=\lim_{t\rightarrow\infty}t\Prt\pp{   t^{-1/\alpha} \|\sbf{\zeta}\|_\infty > x  }= ds x^{-\alpha}.
\end{equation}
Then,
\begin{align*}
 &\limsup_{t\rightarrow\infty}   t\Prt\pp{ t^{-1/\alpha} C_{\An(v)}\pp{ \sbf{\theta}_{\An(v)}}\|\sbf{\zeta}\|_\infty\ge \delta,\   t^{-1/\alpha} \|\sbf{\zeta}\|_\infty\le \epsilon }\\\le
& \E \limsup_{t\rightarrow\infty}   t\Prt\pp{ t^{-1/\alpha} C_{\An(v)}\pp{ \sbf{\theta}_{\An(v)}}\|\sbf{\zeta}\|_\infty\ge \delta,\   t^{-1/\alpha} \|\sbf{\zeta}\|_\infty\le \epsilon 
\mid \sbf{\theta} }\\ \le  &  ds \E \pb{ \pp{ \delta^{-\alpha} C_{\An(v)}\pp{ \sbf{\theta}_{\An(v)}}^\alpha-\epsilon^{-\alpha}}_+ }.
\end{align*}
Here,  the first inequality displayed above follows from a reversed Fatou's Lemma since
$    t\Prt\pp{ t^{-1/\alpha} C_{\An(v)}\pp{ \sbf{\theta}_{\An(v)}}\|\sbf{\zeta}\|_\infty\ge \delta  \mid \sbf{\theta} }\le c_0 C_{\An(v)}\pp{\sbf{\theta}_{\An(v)}}^\alpha \delta^{-\alpha}
$
almost surely for some constant $c_0>0$ by \eqref{eq:zeta norm RV}, and   $\E \pb{C_{\An(v)}\pp{\sbf{\theta}_{\An(v)}}^\alpha}<\infty$. The second inequality displayed above follows from \eqref{eq:zeta norm RV} again.
Now,  the final bound displayed above   tends to $0$ if $\epsilon\downarrow 0$ by the dominated convergence theorem. So \eqref{eq:limit eSCM goal 2}  follows combining the relations above.

At last, we note that the third assumption in the theorem ensures that the marginal law of $\mbf{Y}$ is nontrivial, that is, $\mu(Y_v>y_v)=s_v y_v^{-\alpha}$ for some $s_v\in (0,\infty)$.  In fact, since we have already proved the relation \eqref{eq:X limit Y},  we have established joint regular variation of $\mbf{X}$, which by \cite[Proposition 2.1.12]{kulik2020heavy} implies the marginal regular variation of each $X_v$, $v\in V$, given that  the law of $X_v$ is not a constant zero.

\subsection{Proof of Theorem \ref{Thm:e causal Markov}}\label{sec:pf Thm:e causal Markov}
 We use an alternative characterization of extremal conditional independence for the proof, which follows from \cite[Theorem 4.1 and Remark 4.2]{engelke2025graphical}. Below for a nonempty subset $I\subset V$ and exponent measure $\Lambda$, we use $\Lambda^0_{I}(\cdot)$ to denote the restriction of $\Lambda( \{\mbf{y}\in \bb{E}_V:\ \mbf{y}_I \in \cdot\, , \, \mbf{y}_{V\setminus I}=\mbf{0}_{V\setminus I} \})$   to $\bb{E}_I$. 
\begin{proposition}\label{Pro:alt e cond indep}
Following the notation in Definition \ref{Def:e cond indep}, let $A$, $B$ and $C$ be disjoint nonempty subsets of $V$ such that $V=A\cup B\cup C$.
The extremal conditional independence relation $A\perp B\mid C\, [\Lambda]$ is equivalent to the following two statements:  i) 
 The probablistic conditional independence $\mbf{Y}_A^{(v)}\perp\mbf{Y}_{B}^{(v)}\mid \mbf{Y}_C^{(v)}$ holds for all $v\in C$;
 ii) $A\perp B \left[\Lambda^0_{A\cup B}\right]$ (understood as always true if $\Lambda^0_{A\cup B}$ is a zero measure). 
\end{proposition}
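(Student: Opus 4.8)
The plan is to prove the two implications of the equivalence separately, in each one reorganising the family of conditioned vectors $\mathbf{Y}^{(v)}$, $v\in V=A\cup B\cup C$, from Definition~\ref{Def:e cond indep} according to whether the pivot index $v$ lies in $C$ or in $A\cup B$; the homogeneity \eqref{eq:Lambda homo} of $\Lambda$ will be used repeatedly to produce a polar (Pareto) factorisation of $\Lambda$ along a chosen coordinate. Statement i) is exactly the subfamily of pivots $v\in C$, so the content is to show that, modulo i), the pivots $v\in A\cup B$ are equivalent to statement ii); by the characterisation of unconditional extremal independence in Remark~\ref{Rem:e cond indep} applied to $\Lambda^0_{A\cup B}$ (equivalently \cite[Proposition~5.1]{engelke2025graphical}), statement ii) is the same as $\Lambda(\mathbf{y}_A\neq\mathbf{0}_A,\ \mathbf{y}_B\neq\mathbf{0}_B,\ \mathbf{y}_C=\mathbf{0}_C)=0$. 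Since \cite[Theorem~4.1 and Remark~4.2]{engelke2025graphical} already carries out this kind of reduction for general infinite-mass measures, a shorter route is to quote that result and perform only the notational translation.

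For ``extremal conditional independence $\Rightarrow$ i) and ii)'': i) is immediate, and for ii) it suffices, since $\{\mathbf{y}_A\neq\mathbf{0}_A\}=\bigcup_{v\in A}\{y_v>0\}$, to fix $v\in A$ and show $\Lambda(y_v>0,\ \mathbf{y}_B\neq\mathbf{0}_B,\ \mathbf{y}_C=\mathbf{0}_C)=0$. If this failed, $\Lambda(y_v\ge1,\ \mathbf{y}_C=\mathbf{0}_C)>0$, so $\{\mathbf{Y}_C^{(v)}=\mathbf{0}_C\}$ is an atom of the law of $\mathbf{Y}^{(v)}$ and the relation $\mathbf{Y}_A^{(v)}\perp\mathbf{Y}_B^{(v)}\mid\mathbf{Y}_C^{(v)}$ gives $\mathbf{Y}_A^{(v)}\perp\mathbf{Y}_B^{(v)}$ conditionally on this atom. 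On the face $\{\mathbf{y}_C=\mathbf{0}_C\}$ the measure $\Lambda$ is homogeneous, so the polar decomposition along the coordinate $y_v$ yields, conditionally on the atom, $\mathbf{Y}^{(v)}\EqD P\mathbf{Z}$ with $P$ Pareto$(\alpha)$ on $[1,\infty)$ independent of $\mathbf{Z}$ and $Z_v=1$. A short lemma then concludes: $P\mathbf{Z}_A\perp P\mathbf{Z}_B$ forces $\mathbf{Z}_B=\mathbf{0}_B$ a.s., because the $v$-th coordinate of $\mathbf{Y}_A^{(v)}=P\mathbf{Z}_A$ equals $P$, so $P\perp P\mathbf{Z}_B$; using $P\perp\mathbf{Z}_B$ and the full support of the Pareto law, $p\mapsto\mathbb{P}(\|\mathbf{Z}_B\|\le r/p)$ must be constant on $[1,\infty)$ for every $r>0$, and letting $p\to\infty$ and then $r\to\infty$ forces $\mathbf{Z}_B=\mathbf{0}_B$. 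Hence $\mathbf{Y}_B^{(v)}=\mathbf{0}_B$ conditionally on the atom, i.e.\ $\Lambda(y_v\ge1,\ \mathbf{y}_B\neq\mathbf{0}_B,\ \mathbf{y}_C=\mathbf{0}_C)=0$, a contradiction.

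For ``i) and ii) $\Rightarrow$ extremal conditional independence'': pivots $v\in C$ are covered by i), and for $v\in A$ (the case $v\in B$ is symmetric) I would split on the value of $\mathbf{Y}_C^{(v)}$. On the atom $\mathbf{Y}_C^{(v)}=\mathbf{0}_C$, statement ii) gives $\mathbf{Y}_B^{(v)}=\mathbf{0}_B$ a.s.\ (since $\mathbf{Y}_A^{(v)}\neq\mathbf{0}_A$ there), so $\mathbf{Y}_A^{(v)}\perp\mathbf{Y}_B^{(v)}$ trivially. For values $\mathbf{Y}_C^{(v)}=\mathbf{c}\neq\mathbf{0}_C$ I would use that, since $\{y_u\ge1\}\in\sigma(\mathbf{Y}_C)$ for $u\in C$, a disintegration of $\Lambda$ along $\mathbf{Y}_C$ together with homogeneity shows i) is equivalent to ``$\mathbf{Y}_A\perp\mathbf{Y}_B$ under $\Lambda(\cdot\mid\mathbf{Y}_C=\mathbf{c})$ for $\Lambda$-a.e.\ $\mathbf{c}\neq\mathbf{0}_C$'', and that passing to the law of $\mathbf{Y}^{(v)}$ only restricts to $\{y_v\ge1\}$, a $\sigma(\mathbf{Y}_A)$-event, which reweights the conditional law of $\mathbf{Y}_A$ given $\mathbf{Y}_C=\mathbf{c}$ but not that of $\mathbf{Y}_B$, hence preserves $\mathbf{Y}_A\perp\mathbf{Y}_B\mid\mathbf{Y}_C$. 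Combining the two regimes gives the relation for the pivot $v$.

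I expect the main difficulty to be the measure-theoretic bookkeeping: justifying the polar/Pareto factorisation of $\Lambda$ on the face $\{\mathbf{y}_C=\mathbf{0}_C\}$, handling $\{\mathbf{Y}_C^{(v)}=\mathbf{0}_C\}$ correctly as an atom of the disintegration, and patching conditional independence across the $\mathbf{Y}_C$-values so that the $A\cup B$ pivots on $\{\mathbf{y}_C\neq\mathbf{0}_C\}$ become redundant given i). Since exactly this machinery is developed in \cite[Theorem~4.1 and Remark~4.2]{engelke2025graphical}, the economical route in the paper is to invoke that result and perform only the translation of notation together with the identification of ii) with the face condition via Remark~\ref{Rem:e cond indep}.
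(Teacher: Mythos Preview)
Your proposal is correct and, in its final paragraph, exactly anticipates what the paper does: the paper offers no self-contained proof of this proposition but simply states that it follows from \cite[Theorem~4.1 and Remark~4.2]{engelke2025graphical}, i.e., the ``economical route'' you identify. Your additional direct sketch (the Pareto-factorisation lemma forcing $\mathbf{Z}_B=\mathbf{0}_B$ on the face $\{\mathbf{y}_C=\mathbf{0}_C\}$, and the disintegration argument on $\{\mathbf{y}_C\neq\mathbf{0}_C\}$) goes beyond the paper and is sound, with the caveat you yourself flag: the patching of conditional independence across $\mathbf{Y}_C$-values for $A\cup B$ pivots needs the disintegration machinery that is precisely what \cite{engelke2025graphical} supplies.
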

We note that although the proposition only concerns the case where all index sets $A$, $B$ and $C$ are nonempty, but when this is not the case, the understanding described in Definition \ref{Def:e cond indep} still applies.

\begin{proof}[Proof of Theorem \ref{Thm:e causal Markov}]
As mentioned before the comments of Theorem \ref{Thm:e causal Markov},  it suffices to prove the local  directed Markov property  \eqref{eq:local markov}. We  fix   a node $v\in V$ from now on. In view of Remark \ref{Rem:e cond indep}, we can assume  $\{v\} \cup  \pp{\nd(v)\setminus  \pa(v)}  \cup \pa(v)=V$, or equivalently,   $\de(v)=\emptyset$.
Under this assumption,  \eqref{eq:local markov} becomes
\begin{equation*} 
\{v\}\perp V\setminus \pp{ \{v\}\cup \pa(v)     } \mid \pa(v) [\Lambda],\quad v\in V,
\end{equation*}
which is what we aim to show.

\noindent $\bullet$ The case   $V=\{v\}$ is trivial.

\noindent $\bullet$ The case $V\neq\{v\}$ and $\pa(v)=\emptyset$.

In this case, one needs to show $\{v\}\perp V\setminus  \pc{v}    [\Lambda]$. In view of Remark \ref{Rem:e cond indep}, it suffices to show $\mu(Y_v>0,  Y_u>0 )=0$ for any $u\in V\setminus\pc{v} $. Fix such a pair $(u,v)$ in the following. 
Note that since $v$ is a root node, in view of \eqref{eq:general eSCM}, one has only $Y_v=a_v\eta_v$. So $Y_v>0$ implies $\eta_v>0$, and hence $\eta_w=0$ for all $w\neq v$ due to the single-activation nature of $\sbf{\eta}$. Since $\de(v)=\emptyset$ by assumption, we have $v\notin\An(u)$, and hence $Y_v>0$ implies  $Y_u=F_{\cl{A}(u)}\pp{ \mbf{0}_{\mrm{An}(u)}, \sbf{\theta}_{\mrm{An}(u)} }=0$ (see \eqref{eq:Y=F(eta,theta)}).  Therefore $\mu(Y_v>0,  Y_u>0 )=0$.  

\noindent $\bullet$ The case  $V= \{v\}\cup \pa(v) $ and  $\pa(v)\neq \emptyset$  is trivial.

\noindent $\bullet$ The case $V\neq  \{v\}\cup \pa(v) $ and  $\pa(v)\neq \emptyset$.

In this case, we apply Proposition \ref{Pro:alt e cond indep} with $A=\{v\}$, $B=V\setminus \pp{ \{v\}\cup \pa(v)   }$ and $C=\pa(v)$.

\noindent \emph{Verification of condition i) in Proposition \ref{Pro:alt e cond indep}.} 

For this purpose,  fix $u\in \pa(v)$.    Assume now without loss of generality that the underlying measure space $(\Omega,\cl{F},\mu)$ is the canonical space:  $\Omega=\bb{E}_V\times [0,1]^V$, $\cl{F}$ is the Borel-$\sigma$-field, and $\mu=\Lambda^{\perp}\otimes \mathrm{Leb}^d$, where $\mathrm{Leb}$ denotes the Lebesgue measure on $[0,1]$.  Define $\Omega_u=\pc{Y_u \ge  1}\subset \Omega$, and introduce a probability measure $\mu_u(\cdot)$ on $\Omega_u$ as the restriction of  $\mu(\cdot \cap \Omega_u)/\mu(\Omega_u)$ to $\Omega_u$.  
Now we define $\mbf{Y}^{(u)}=\mbf{F}_{\cl{G}}(\sbf{\eta},\sbf{\theta})$, with $\mbf{F}_{\cl{G}}$ as in  \eqref{eq:Y=F(eta,theta)}, on the probability space $(\Omega_u,\cl{F}_u,\mu_u )$, where $\cl{F}_u$ is the restriction of $\cl{F}$ to $\Omega_u$. 
Then the probablistic law   of $\mbf{Y}^{(u)}=\pp{Y_v^{(u)}}_{v\in V}$ aligns with the random vector described in  Definition \ref{Def:e cond indep}. 

Next, recall one may express $Y_u$ be its ancestors as $Y_u=F_{\cl{A}(u)}\pp{ \sbf{\eta}_{\mrm{An}(u)}, \sbf{\theta}_{\mrm{An}(u)} }$, $ F_{\cl{A}(u)}$ is as in \eqref{eq:Y=F(eta,theta)}.  Therefore, $\Omega_u$ can be expressed as
\begin{equation}\label{eq:Omega_u}
\Omega_u=\pc{ \pp{\sbf{\eta},  \sbf{\theta}}\in  \bb{E}_V\times [0,1]^V: \, \pp{\sbf{\eta}_{\mrm{An}(u)}, \sbf{\theta}_{\mrm{An}(u)}} \in  F_{\cl{A}(u)}^{-1}[1,\infty)  }.
\end{equation}
Furthermore,    $Y_u\ge 1$
  implies $\eta_w>0$ for precisely one $w\in \An(u)$. In particular, we must have $\eta_v=0$, since as a child node of $u$, the node $v\notin \An(u)$. Hence on $\Omega_u$, 
\begin{equation}\label{eq:Y_v^{(u)}}
Y_v^{(u)}=f_v\pp{ \mbf{Y}^{(u)}_{\pa(v)}, 0, \theta_v}=h_v\pp{\mbf{Y}^{(u)}_{\pa(v)},   \theta_v}.
\end{equation}   
In view of the fact   $v\notin \An(u)$, \eqref{eq:Omega_u} and the definition of $\mu$,  we can also see that  
under $(\Omega_u,\mu_u )$, the random variable $\theta_v$   is independent of the random vector $(\sbf{\eta}_{V\setminus \{v\}}, \sbf{\theta}_{V\setminus \{v\}})$. Combining this with \eqref{eq:Y_v^{(u)}},  we conclude that under $(\Omega_u,\mu_u )$, conditioning on $\mbf{Y}^{(u)}_{\pa(v)}$,   we have the independence   between $Y_v$ and $\mbf{Y}^{(u)}_{ V\setminus \pp{\{v\}\cup \pa(v) } }$,   the latter being a measurable function of $(\sbf{\eta}_{V\setminus \{v\}}, \sbf{\theta}_{V\setminus \{v\}})$.

\noindent \emph{Verification of condition ii) in Proposition \ref{Pro:alt e cond indep}.}

It suffices to show that 
$\mu\pp{\mbf{Y}_{\pa(v)}=0 ,\,  Y_v>0,\, Y_u>0    }=0$
for any   $u\in V\setminus \pp{ \{v\}\cup \pa(v)}$.  Indeed, under  $\mbf{Y}_{\pa(v)}=0$, the stipulation  $Y_v=f_v\pp{ \mbf{0}_{\pa(v)}, \eta_v, \theta_v}=a_v \eta_v>0$ implies that $\eta_v>0$, and hence $\eta_w=0$ for all $w\neq v$. Since also  $\de(v)=\emptyset$ by assumption, and $u\neq v$, we know $v\notin \An(u)$, which further implies $Y_u=F_{\cl{A}(u)}\pp{ \sbf{0}_{\mrm{An}(u)}, \sbf{\theta}_{\mrm{An}(u)} }=0$.  The conclusion then follows.

\end{proof}

\subsection{Proof of Theorem \ref{Thm:Lambda expr by eSCM}}

We prove the theorem by induction on the node size. To start the induction, note that when we only have a single node $1$ in \eqref{eq:general eSCM},  one can simply set  $Y_1=f_1(\eta_1,\theta_1)=s_1^{1/\alpha}\eta_1$ to achieve the desirable exponent measure. 

Now suppose that the conclusion holds for node size $d\in \bb{Z}_+$, and we want to prove it when the node size becomes $d+1$. 
  We  use $\cl{G}_+=(V_+,E_+)$ to denote the DAG with node set $V_+=\{1,\ldots,d+1\}$ and edge set $E_+$. Suppose $\Lambda_{V_+}$ is an  exponent measure on $\bb{E}_{V_+}$ obeying the extremal causal Markov property with respect to $\cl{G}_+$.  Since $\cl{G}_+$ is a DAG, there exists at least one leaf (i.e., childless) node.
Without loss of generality, suppose $d+1$ is such a leaf node.  Set $V=V_+\setminus\{d+1\}=\{1,\ldots,d\}$, and let $\cl{G}$ be the sub-DAG of $\cl{G}_+$ with node set $V$. 

Next, as in Section \ref{sec:pf Thm:e causal Markov}, consider without loss of generality the canonical measure space $\Omega=\bb{E}_V\times [0,1]^{V}=\pc{ \pp{ (\eta_v)_{v\in V}, (\theta_v)_{v\in V}} }$ with measure $\mu=\Lambda^{\perp}\otimes \mathrm{Leb}^d$ on the Borel $\sigma$-field of $\Omega$, where $\Lambda^{\perp}$ is as in \eqref{eq:Lambda_perp}.
By the induction assumption, there exist functions $f_v$, $v\in V$, as described in Definition \ref{def:eSCM}, such that with the extreme variables $\mbf{Y}_V=\pp{Y_v}_{v\in V}$ given by the recursive equations \eqref{eq:general eSCM}, one has 
\begin{equation}\label{eq:induction assump} 
\cl{L}(\mbf{Y}_V)=\Lambda_V,
\end{equation}
where   $\Lambda_V(\cdot)$ is an exponent measure on $\bb{E}_V= [0,\infty)^V\setminus\{\mbf{0}_V\}$ obtained by the restriction of $\Lambda_{V_+}( \{\mbf{y}_V \in \cdot\ ,   \mbf{y}_V\neq \mbf{0}_V\})$  to $\bb{E}_V$, and  $\cl{L}\pp{\mbf{Y}_V}$ denotes  the restriction of $\mu(\mbf{Y}_V\in \cdot )$  to $\bb{E}_V$.

Now we enlarge the measure space $\Omega$ by adjoining a new pair of variables $(\eta_{d+1},\theta_{d+1})$. In particular, we set $\Omega_+= \bb{E}_{V_+}\times [0,1]^{V_+}=\pc{ \pp{ (\eta_v)_{v\in V_+}, (\theta_v)_{v\in V_+}} }$, and consider the measure $\mu_+=\Lambda^{\perp}_+ \otimes\mathrm{Leb}^{d+1}$, where $\Lambda^{\perp}_+$  is a measure on $\bb{E}_{V_+}$  defined in the same way as  $\Lambda^{\perp}$ in \eqref{eq:Lambda_perp} but with dimensionality $d+1$.
The variables $\mbf{Y}_V=\pp{Y_v}_{v\in V}$ constructed by the recursive equations \eqref{eq:general eSCM} continue to make sense in the enlarged measurement space, once we additionally require $\mbf{Y}_V$ not to depend on $\theta_{d+1}$ on $\{\eta_{d+1}=0\}$ and set $\mbf{Y}_V=\mbf{0}_V$ on $\{\eta_1=\ldots=\eta_d=0,\eta_{d+1}>0\}$ (note that the relation $\eta_1=\ldots=\eta_d=0$  is not admissible in the original $\Omega$ space).

With  the construction above, we claim that the following marginalization relation holds:   for any Borel $U\subset \bb{E}_V$, one has
\begin{equation}\label{eq:mu_+ marg}
    \mu_+\pp{\mbf{Y}_{V}\in U}=  \mu\pp{\mbf{Y}_{V}\in U}, 
\end{equation}
 where we slightly abuse the notation to use $\mbf{Y}_{V}$ to denote both  the $V$-marginal variable of $\mbf{Y}_{V_+}$ on the left-hand side, as well as the full variable $\mbf{Y}_{V}$ taking value in $\bb{E}_V$ on the right-hand side. 
 To see \eqref{eq:mu_+ marg}, recall that one can write $\mbf{Y}_V=\mbf{F}_{\cl{G}}\pp{\boldsymbol{\eta}_V,\sbf{\theta}_V }$ for some $\mbf{F}_{\cl{G}}:\Omega=\bb{E}_V\times [0,1]^V\mapsto [0,\infty)$   as in \eqref{eq:Y=F(eta,theta)}. Here the node $d+1$ is not involved in expressing $\mbf{Y}_V$ since it is a leaf node. Observe also that     $\mbf{Y}_V\neq \mbf{0}_V $ implies  $\eta_v>0$ for some $v\in V$ and thus $\eta_{d+1}=0$. Hence with $U \subset \bb{E}_V$ (thus $\mbf{0}_V\notin U$), one has 
 \begin{align*}
     \mu_+\pp{\mbf{Y}_V\in  U}=\mu_+\pp{ \pp{\mbf{F}_{\cl{G}}^{-1}U} \times \{0\}^{\{d+1\}} \times [0,1]^{\{d+1\}} }.
 \end{align*}
 We claim that the last expression is equal to $\mu\pp{\mbf{F}_{\cl{G}}^{-1} U}$. Indeed, since $\mbf{F}_{\cl{G}}\pp{ \mbf{0}_V ,\sbf{\theta}_V }=\mbf{0}_V$ for any $\sbf{\theta}_V\in [0,1]^V$,  we have
 $
 \mbf{F}_{\cl{G}}^{-1} U \subset  \bb{E}_V \times [0,1]^V.
 $
 So by a  measure-determining argument, it suffices to show  
 \[
 \mu_+\pp{ \pp{K\times L} \times \pp{\{0\}^{\{d+1\}} \times [0,1]^{\{d+1\}}} }=\mu\pp{K\times L},
 \]
 where $K \subset \bb{E}_{V}$ and $L\subset [0,1]^V$ are Borel subsets. 
To do so, observe that by the definitions of $\mu$ and $\mu_+$, we have 
\begin{align*}
    &\mu_+\pp{ \pp{K\times L} \times \pp{\{0\}^{\{d+1\}}\times [0,1]^{\{d+1\}}}} \\= & \Lambda_+^{\perp} \pp{  \sbf{\eta}_V\in K,\, \eta_{d+1}=0  } \times \mathrm{Leb}^{d} \pp{L}\times \mathrm{Leb}([0,1]^{\pc{d+1}})\\
    = & \Lambda^{\perp}(K) \times \mathrm{Leb}^{d} \pp{L} =\mu(K\times L)
\end{align*}
So the proof of \eqref{eq:mu_+ marg} is finished.

Next, to complete the induction argument, we need to construct a measurable function $f_{d+1}: [0,\infty)^{|\pa(d+1)|}\times [0,\infty) \times [0,1]\mapsto [0,\infty)$ in the form of \eqref{eq:general eSCM}, such that with $Y_{d+1}=f_{d+1}\left(\mbf{Y}_{\pa(d+1)},\eta_{d+1},\theta_{d+1}\right)$, we have $\cl{L}(\mbf{Y}_{V_+})=\Lambda$ with $\mbf{Y}_{V_+}:=\pp{Y_v}_{v\in V_+}$.

First,  recall by the extremal causal Markov property,  we have
\begin{equation}\label{eq:d+1 cond indep}
\{d+1\}\perp V\setminus \pa(d+1)  \mid \pa(d+1) [\Lambda_{V_+}].
\end{equation}
We divide the construction of $f_{d+1}$ into several cases.

\noindent $\bullet$ The case  $\pa(d+1)=\emptyset$.

In this case,  we simply let $$Y_{d+1}=f_{d+1}(\eta_{d+1},\theta_{d+1}):=s_{d+1}^{1/\alpha} \eta_{d+1},$$ where $s_{d+1}=\Lambda_{V_+}(y_{d+1}\ge 1)\in (0,\infty)$. 
Then one has for $(x_1,\ldots,x_{d+1})\in \bb{E}_{V_+}$ that
\begin{align*}
&\mu_+(Y_1\ge x_1,\ldots, Y_{d+1}\ge x_{d+1} )   \\ 
=&\begin{cases}
    0     & \text{ if } (x_1,\ldots,x_{d})\neq \mbf{0}_V  \text{ and } x_{d+1}>0,\\
        s_{d+1} x_{d+1}^{-\alpha}     & \text{ if } (x_1,\ldots,x_{d})=\mbf{0}_V  \text{ and } x_{d+1}>0 , \\
     \Lambda_{V}(y_1\ge x_1,\ldots,y_{d}\ge x_d)            & \text{ if }(x_1,\ldots,x_{d})\neq \mbf{0}_V \text{ and } x_{d+1}=0.
\end{cases}
\end{align*}
Here, the first case holds since if $Y_v>0$ for some $v\in V$, then $\eta_w>0$ for some $w\in \An(v)\subset V=\{1,\ldots,d\}$ in view of \eqref{eq:Y=F(eta,theta)}, which implies $\eta_{d+1}=0$ since $d+1\notin\An(v)$ as a leaf node.  The second case holds by the definition of $s_{d+1}$ and the homogeneity  property: $ \Lambda_{V_+}(y_{d+1}>x_{d+1})=x_{d+1}^{-\alpha} \Lambda_{V_+}(y_{d+1}\ge 1)$. The third case  holds due to \eqref{eq:induction assump} and \eqref{eq:mu_+ marg}.

On the other hand, recall in the case $\pa(d+1)=\emptyset$, the relation \eqref{eq:d+1 cond indep} means extremal independence, i.e., $ \Lambda_{V_+}( \mbf{y}_V \neq \mbf{0}_V, y_{d+1}>0)=0$. Based on this and again the homogeneity property of $\Lambda_{V_+}$, one can derive the same expression for $ \Lambda_{V_+}(y_1\ge x_1,\ldots, y_{d+1}\ge x_{d+1} )$  as the one
displayed above. The conclusion $\cl{L}(\mbf{Y}_{V_+})=\Lambda_{V_+}$ then follows from a usual measure-determining argument (e.g., one based on Dynkin's $\pi$-$\lambda$ Theorem and $\sigma$-finiteness).   

\medskip
\noindent $\bullet$ The case $\pa(d+1)\neq \emptyset$ and $\pa(d+1)\neq V$.

Recall $\|\cdot\|_\infty$ is the $\ell^\infty$ norm on $\bb{R}^{d}$.  We shall construct the function 
$f_{d+1}$ as 
\begin{align}\label{eq:f d+1 def gen}
   &f_{d+1}(\mbf{Y}_{\pa(d+1)}, \eta_{d+1},\theta_{d+1})=r_{d+1}^{1/\alpha} \eta_{d+1} + \notag\\& \mbf{1}_{\{\mbf{Y}_{\pa(d+1)}\neq  \mbf{0}_{\pa(d+1)}\}}    \|\mbf{Y}_{\pa(d+1)}\|_\infty   g\left( 
 \frac{\mbf{Y}_{\pa(d+1)}}{\|\mbf{Y}_{\pa(d+1)}\|_\infty}, \theta_{d+1}  \right)
\end{align}
for a suitable measurable mapping $g: \bb{S}_{\pa(d+1)}\times [0,1] \mapsto [0,\infty)$ that will be described below,  where 
$$\bb{S}_{\pa(d+1)}:=\left\{\mbf{y}_{\pa(d+1)}\in [0,\infty)^{\pa(d+1)}: \ \|\mbf{y}_{\pa(d+1)}\|_\infty=1 \right\},
$$
and   $$r_{d+1}:= \Lambda_{V_+}(y_{d+1}>1, \mbf{y}_{\pa(d+1)}=\mbf{0}_{\pa(d+1)})= \Lambda_{V_+}(y_{d+1}>1, \mbf{y}_{V}=\mbf{0}_{V}).$$
Here, the second equality holds due to the   Markov property \eqref{eq:d+1 cond indep} and   case ii) of Proposition \ref{Pro:alt e cond indep}. 
Note that the proper structural function  extracted from \eqref{eq:f d+1 def gen}
$$
h_{d+1}(\mbf{y}_{\pa(d+1)},\theta_{d+1}):=\mbf{1}_{\{  \mbf{y}_{\pa(d+1)}\neq  \mbf{0}_{\pa(d+1)}\}}    \|\mbf{y}_{\pa(d+1)}\|_\infty   g\pp{ \frac{\mbf{y}_{\pa(d+1)}}{\|\mbf{y}_{\pa(d+1)}\|_\infty}, \theta_{d+1}}
$$ 
satisfies the homogeneity requirement: $h_{d+1}(c\mbf{y}_{\pa(d+1)})=ch_{d+1}(\mbf{y}_{\pa(d+1)})$, for any constant $c\ge 0$. 
Here, the fraction  $\frac{\mbf{y}_{\pa(d+1)}}{\|\mbf{y}_{\pa(d+1)}\|_\infty}$ inside $g$ can be understood as an arbitrary fixed point on $\bb{S}_{\pa(d+1)}$ when $\|\mbf{y}_{\pa(d+1)}\|_\infty=0$.
This in turn results in the homogeneity of $f_{d+1}$ in \eqref{eq:f d+1 def gen},
which combined with the induction assumption also ensures the anticipated homogeneity property for $\mu_+$, that is, 
\begin{equation}\label{eq:mu_+ homo}
\mu_+\pp{ \mbf{Y}_{V_+}\in c  B  }=c^{-\alpha} \mu_+\pp{ \mbf{Y}_{V_+}\in   B  } 
\end{equation}
for any Borel $B\in \bb{E}_{V_+}$ and $c>0$; see the Proof of Proposition \ref{Pro:basic} in Section \ref{sec:pf Pro:basic}.

Now we describe the construction of $g$.  Below, we use the conditioning notation even for infinite measures whenever appropriate, e.g., we use $ \Lambda_{V_+}( \ \cdot  \mid R ) $ to denote $  \Lambda_{V_+}( \, \cdot  \cap R )/  \Lambda_{V_+}(R)$ for any Borel $R\subset \bb{E}_{V_+}$ with $ \Lambda_{V_+}(R)\in (0,\infty)$.
Let $\sigma$ be the probability measure on $\bb{S}_{\pa(d+1)}\times [0,\infty)^{\{d+1\}}$  defined by 
\[
\sigma( U )= \Lambda_{V_+}\pp{ \pp{  \frac{\mbf{y}_{\pa(d+1)}} {\|\mbf{y}_{\pa(d+1)}\|_\infty}, \frac{ y_{d+1}}{\|\mbf{y}_{\pa(d+1)}\|_\infty} }  \in U  \ \bigg| \
 \|\mbf{y}_{\pa(d+1)}\|_\infty>1   }
\]
for Borel $U$ on $\bb{S}_{\pa(d+1)}$.
If  $(\mbf{S}, Z )$ is a random vector following the distribution $\sigma$ above, by the noise outsourcing lemma (e.g., \cite[Proposition 8.20]{kallenberg:2021:foundations}), there exists a measurable function $g: \bb{S}_{\pa(d+1)}\times [0,1] \mapsto [0,\infty)$, such that 
\begin{equation}\label{eq:outsource}
   (\mbf{S}, Z )\EqD \pp{\mbf{S}, g(\mbf{S},\theta) },
\end{equation} 
where $\theta$ is a Uniform(0,1) random variable independent of $\mbf{S}$.

We now proceed to check $\cl{L}(\mbf{Y}_{V_+})=\Lambda_{V_+}$. Decompose 
\begin{align}
 \Lambda_{V_+}(\cdot)&=\Lambda_{V_+}\pp{\mbf{y}_{V_+}\in \cdot \ , \ \mbf{y}_{\pa(d+1)}=\mbf{0}_{\pa(d+1)} } + \Lambda_{V_+}\pp{\mbf{y}_{V_+}\in \cdot \ , \ \mbf{y}_{\pa(d+1)}\neq \mbf{0}_{\pa(d+1)} } \notag \\
&=:  \Lambda^{(1)}_{V_+}\pp{\cdot} + \Lambda^{(2)}_{V_+}\pp{\cdot},\label{eq:Lambda decomp 1 2}
\end{align} 
and  $\mu_+=\mu_+^{(1)}+\mu_+^{(2)}$  with the two measures $\mu_+^{(1)}$ and $\mu_+^{(2)}$  defined in an analogous fashion as $\Lambda^{(1)}_{{V_+}}$ and $\Lambda^{(2)}_{{V_+}}$, respectively.  The rest of the proof aims to  show  $\mu_+^{(i)}(B)=\Lambda^{(i)}_{V_+}(B)$, $i=1,2$, for any Borel $B\subset \bb{E}_{V_+}$,  which finishes the proof. 

Note that Proposition \ref{Pro:alt e cond indep} implies that  $\Lambda^{(1)}_{V_+}( y_{d+1}>0,\  \mbf{y}_{V_0} \neq \mbf{0}_{V_0} )=0$, where $$V_0:=V\setminus \pa(d+1).$$
 Using argument similar to that for the case $\pa(d+1)=\emptyset$ above, it can be verified that for any $B(\mbf{x})\subset \bb{E}_{V+}$ of the form $B(\mbf{x})=\{\mbf{y}_{V_+}\in \bb{E}_{V+}:\ y_v\ge x_v,\ v\in V_+ \}$, $\mbf{x}=(x_v)_{v\in V_+}\in \bb{E}_{V_+}$, one has for $\mbf{x}_{\pa(d+1)}=\mbf{0}_{\pa(d+1)}$ that
\begin{align*}
    &\Lambda^{(1)}_{V_+}(B(\mbf{x}))=\mu^{(1)}_+(\mbf{Y}_{V_+}\in B(\mbf{x}) ) \\=&\begin{cases}
    0     & \text{ if } \mbf{x}_{V_0}\neq \mbf{0}_{V_0}  \text{ and } x_{d+1}>0,\\
        r_{d+1} x_{d+1}^{-\alpha}     & \text{ if } \mbf{x}_{V_0}=\mbf{0}_{V_0}  \text{ and } x_{d+1}>0 , \\
    \Lambda_V(\mbf{y}_{w} \ge \mbf{x}_{w},\, w\in V_0)            & \text{ if }\mbf{x}_{V_0}\neq \mbf{0}_{V_0} \text{ and } x_{d+1}=0,
\end{cases}
\end{align*}
 and both are $0$ for $\mbf{x}_{\pa(d+1)}\neq \mbf{0}_{\pa(d+1)}$. Then by a measure-determining argument, we infer that the same relation continues to hold if $B(\mbf{x})$ above is replaced by a general Borel subset of $\bb{E}_{V_+}.$

It remains to show that 
\begin{equation}\label{eq:Lambda^(2)=mu^(2)}
\Lambda^{(2)}_{V_+} \pp{B(\mbf{x})}= \mu^{(2)}_+\pp{ \mbf{Y}_{V_+}\in B(\mbf{x}) }
\end{equation}
for any  $B(\mbf{x})$ as above, $\mbf{x}\in  \bb{E}_{V_+}$. By the homogeneity property of $\Lambda^{(2)}_{V_+}$ and $\mu^{(2)}_+(\mbf{Y}_{V_+}\in \cdot)$ (restricted to $\bb{E}_{V_+}$),  it suffices to show for every $u\in \pa(d+1)$, the relation \eqref{eq:Lambda^(2)=mu^(2)} holds with $\mbf{x} \in  \bb{E}_{V_+}$ such that $x_u=1$.  From now on, fix such an $u\in \pa(d+1)$ and $\mbf{x}=(x_1,\ldots,x_{d+1})\in \bb{E}_{V_+}$ with $x_u=1$.
Furthermore,     we have  
\begin{equation}\label{eq:Lambda^2 mu_+}
\Lambda^{(2)}_{V_+}(y_u\ge 1)= \Lambda_{V_+}(y_u\ge 1)=\mu(Y_u\ge 1)=\mu_+(Y_u\ge 1)=\mu_+^{(2)}(Y_u\ge 1),
\end{equation}
where the first equality is due to \eqref{eq:Lambda decomp 1 2}, the second    due to \eqref{eq:induction assump},   the third  due to \eqref{eq:mu_+ marg}, and the last one follows from the definition of $\mu_+^{(2)}$.  
So taking into account \eqref{eq:Lambda^2 mu_+}, in order to show \eqref{eq:Lambda^(2)=mu^(2)} under  the restriction $x_u=1$,  it suffices to show 
\begin{equation}\label{eq:y Y eqd}
 \pp{ \mbf{y}^{(u)}_{V_0},\mbf{y}^{(u)}_{\pa(d+1)},y^{(u)}_{d+1} } \EqD   \pp{ \mbf{Y}^{(u)}_{V_0},\mbf{Y}^{(u)}_{\pa(d+1)},Y^{(u)}_{d+1} },
\end{equation}
where  $ \mbf{y}^{(u)}_{V_+}:=\pp{ \mbf{y}^{(u)}_{V_0},\mbf{y}^{(u)}_{\pa(d+1)},y^{(u)}_{d+1} }$ is a random vector following the distribution   $\Lambda^{(2)}_{V_+} (\ \cdot  \mid y_u\ge 1 )= \Lambda_{V_+}( \ \cdot  \mid y_u\ge 1 )$, 
and $ \mbf{Y}^{(u)}_{V_+}:=\pp{ \mbf{Y}^{(u)}_{V_0},\mbf{Y}^{(u)}_{\pa(d+1)},Y^{(u)}_{d+1} }$ is a random vector following the distribution   $\mu_+^{(2)} (\ \cdot  \mid Y_u\ge 1 )=\mu_+(\ \cdot  \mid Y_u\ge 1 )$.
 
 Next,
in view of the conditional independence relation \eqref{eq:d+1 cond indep} and Proposition \ref{Pro:alt e cond indep}, we have the  conditional independence  relation
\begin{equation}\label{eq:cond indep y}
    y^{(u)}_{d+1} \perp \mbf{y}^{(u)}_{V_0} \mid \mbf{y}^{(u)}_{\pa(d+1)}.
\end{equation}
On the other hand,     $Y_u\ge 1$ implies $\eta_v>0$ for some $v\in \An(u)$, and hence $\eta_{d+1}=0$. So from \eqref{eq:f d+1 def gen},  on $\{Y_u\ge 1\}$
we have
\begin{equation}\label{eq:Y_d+1 def}
Y_{d+1}= \|\mbf{Y}_{\pa(d+1)}\|_\infty   g( \mbf{Y}_{\pa(d+1)}/\|\mbf{Y}_{\pa(d+1)}\|_\infty, \theta_{d+1}   ).
\end{equation}
Since by construction, under $\mu_+(\ \cdot \mid Y_u \ge 1)$, the random variable $\theta_{d+1}$ is independent of $\pp{\mbf{Y}_{V_0}^{(u)}, \mbf{Y}_{\pa(d+1)}^{(u)}}$ as a function of $\pp{\sbf{\eta}_V, \sbf{\theta}_V}$, we also have the conditional independence relation 
\begin{equation}\label{eq:cond indep Y}
Y_{d+1}^{(u)}  \perp \mbf{Y}_{V_0}^{(u)}  \mid \mbf{Y}_{\pa(d+1)}^{(u)}. 
\end{equation}

In addition, it can be inferred  from  the induction assumption \eqref{eq:induction assump} and relation \eqref{eq:mu_+ marg} that
\begin{equation}\label{eq:y_V Y_V}
 \pp{ \mbf{y}^{(u)}_{V_0},\mbf{y}^{(u)}_{\pa(d+1)}} \EqD   \pp{ \mbf{Y}^{(u)}_{V_0},\mbf{Y}^{(u)}_{\pa(d+1)}}. 
\end{equation}
So combining \eqref{eq:cond indep y}, \eqref{eq:cond indep Y} and \eqref{eq:y_V Y_V}, in order to show \eqref{eq:y Y eqd}, it suffices to show  $\pp{ \mbf{y}^{(u)}_{\pa(d+1)},y^{(u)}_{d+1} } \EqD   \pp{ \mbf{Y}^{(u)}_{\pa(d+1)},Y^{(u)}_{d+1} }$, that is,
 \begin{align}\label{eq:mu+ Lambda final}
  \Lambda_{V_+} \pp{ \pp{y_{d+1},\mbf{y}_{\pa(d+1)}} \in \cdot  \mid y_u\ge 1 } = \mu_+ \pp{ \pp{Y_{d+1}, \mbf{Y}_{\pa(d+1)}}\in  \cdot \mid  Y_u\ge 1 }. 
\end{align}
To do so, we first make the following claim:
\begin{align}  \label{eq:key equal d}
 & \Lambda_{V_+}\pp{\pp{   \| \mbf{y}_{\pa(d+1)}\|_\infty, \frac{\mbf{y}_{\pa(d+1)}}{\| \mbf{y}_{\pa(d+1)}\|_\infty}, \frac{y_{d+1}}{\|\mbf{y}_{\pa(d+1)}\|_\infty
  }}\in \cdot   \ \biggr |   \|\mbf{y}_{\pa(d+1)}\|_\infty \ge 1 } & \notag \\
  =&  \mu_+ \pp{\pp{   \| \mbf{Y}_{\pa(d+1)}\|_\infty, \frac{\mbf{Y}_{\pa(d+1)}}{\| \mbf{Y}_{\pa(d+1)}\|_\infty}, \frac{Y_{d+1}}{\|\mbf{Y}_{\pa(d+1)}\|_\infty
  }}\in \cdot \ \biggr|  \|\mbf{Y}_{\pa(d+1)}\|_\infty \ge 1 }.
\end{align}
Indeed,   we point out that under the probability measure $\mu_+\pp{\ \cdot \mid  \| \mbf{Y}_{\pa(d+1)}\|_\infty \ge 1  }$, the random variable $ \| \mbf{Y}_{\pa(d+1)}\|_\infty$ is independent of $\mbf{Y}_{\pa(d+1)}/ \| \mbf{Y}_{\pa(d+1)}\|_\infty$ and $Y_{d+1}/\|\mbf{Y}_{\pa(d+1)}\|_\infty$. This  follows from the homogeneity of $\mu_+\pp{ \mbf{Y}_{V_+} \in \cdot }$ as mentioned in \eqref{eq:mu_+ homo}; 
see, e.g., the proof of \cite[Theorem B.2.5]{kulik2020heavy}.  A similar independence conclusion also holds for the $\mbf{y}$-random variables under $ \Lambda_{V_+}(\ \cdot  \mid \|\mbf{y}_{\pa(d+1)}\|_\infty\ge 1 )$ in \eqref{eq:key equal d}. Then   \eqref{eq:key equal d} follows from these independence relations,  \eqref{eq:outsource} and \eqref{eq:Y_d+1 def}.

Now,     in order to conclude \eqref{eq:mu+ Lambda final} based on  \eqref{eq:key equal d}, it suffices to note that  $\{y_u\ge 1\}\subset \{\|\mbf{y}_u\|_\infty \ge 1\}$, $\{Y_u\ge 1\}\subset \{\|\mbf{Y}_u\|_\infty \ge 1\}$,   and that for  any Borel $U\subset \bb{E}_{\pa(d+1)}$, we have
$\mu_+\pp{ \mbf{Y}_{\pa(d+1)} \in U }= \Lambda_{V_+}\pp{ \mbf{y}_{\pa(d+1)}\in U  }$ due to \eqref{eq:induction assump} and \eqref{eq:mu_+ marg} once again.

\medskip

\noindent $\bullet$ The case $\pa(d+1)= V$ is  similar to the previous case once obvious simplifications due  to $V_0=\emptyset$    are applied.     We omit the details.

\subsection{Proof of Proposition \ref{Pro:causal asym}}

For the first claim, recall first by the nature of the activation variables,  if $\eta_u>0$, then we have $\eta_w=0$ for all $w\neq u$.   Recall also $Y_v=F_{\cl{A}(v)}\pp{ \sbf{\eta}_{\mrm{An}(v)}, \sbf{\theta}_{\mrm{An}(v)} }$, where $F_{\cl{A}(v)}\pp{ \sbf{0}_{\mrm{An}(v)}, \sbf{\theta}_{\mrm{An}(v)} }=0$. Since also $a_u>0$ by   Assumption \ref{ass:eta act}, we have
\begin{align*}
\mu\left(Y_u>0,Y_v=0\right)\ge  \mu\pp{  a_u \eta_u>0,  \sbf{\eta}_{\mrm{An}(v)}= \mbf{0}_{\mrm{An}(v)}}= \mu\pp{\eta_u>0}>0.
\end{align*}

To show the second claim, 
suppose a directed path from $u$ to $v$ is given by $(u_0:=u,u_1,\ldots,u_s:=v)$, $s\in \bb{Z}_+$.  Since $u_i\in \pa\left(u_{i+1}\right)$, by Assumption \ref{ass:weak asym} and \eqref{eq:general eSCM},    $\mu(Y_{u_{i}}>0, Y_{u_{i+1}}=0)=0$, $i\in\{0,\ldots,s-1\}$.  Since $Y_u>0, Y_v=0$ implies   $Y_{u_{i}}>0, Y_{u_{i+1}}=0$ for some $i\in \{0,\ldots,s-1\}$, applying  the union bound, one has
\[
\mu(Y_u>0, Y_v=0)    \le \sum_{i=0}^{s-1}\mu(Y_{u_{i}}>0, Y_{u_{i+1}}=0)=0.
\]


\subsection{Proof of Proposition \ref{pro:char ass}}

For the first claim,  first observe that if $Y_u>0$, then $\eta_w>0$ for some $w\in \An(u)$, and thus  $\sbf{\eta}_{\An_u^{\circ}(v)}=\sbf{0}_{\An_u^{\circ}(v)}$ since $\pp{\An_u^{\circ}(v)}\cap \An(u)=\emptyset$ by the definition of $\mrm{An}_u^{\circ}(v)$ (see the paragraph above \eqref{eq:u v F}). Therefore, by this and homogeneity of $F_{\cl{A}_u(v)}$, one has
\begin{align}
\Lambda_{\{u,v\}}(y_v<c_{uv}y_u)
&=\mu\pp{ F_{\cl{A}_u(v)}\pp{1, \sbf{0}_{\An_u^{\circ}(v)},\sbf{\theta}_{\An_u^{\circ}(v) }}<c_{uv}, Y_u>0 }\notag \\
&= \Prt_{\sbf{\theta}}\pp{F_{\cl{A}_u(v)}(1,\mbf{0}_{\An_u^{\circ}(v)},\sbf{\theta}_{\An_u^{\circ}(v)})< c_{uv} } \mu(Y_u>0),\label{eq:u v asym der}
\end{align}
where the last relation follows from the fact that $\sbf{\theta}_{\An_u^{\circ}(v) }$ is ``independent'' of  $Y_u=F_{\cl{A}(u)}\pp{ \sbf{\eta}_{\mrm{An}(u)}, \sbf{\theta}_{\mrm{An}(u)} }$ by the construction in Definition \ref{def:eSCM}. The first claim then follows.

For the second claim,   we have by assumption that $h_v(\mbf{Y}_{\pa(v)},\theta_v)\ge d_v\|\mbf{Y}_{\pa(v)}\| $ $\mu$-a.e.\ for some  constant $d_v>0$, $v\in V$. Since  the norm $\|\cdot\|$ is equivalent to $\|\cdot\|_1$, we have for each $v\in V$, there exists a positive constant $c_v>0$, such that 
\begin{equation*}
 Y_v = a_v \eta_v +h_v\pp{\mbf{Y}_{\pa(v)}, \theta_v} \ge a_v\eta_v + c_v \sum_{w\in \pa(v)} Y_w,\quad \text{$\mu$-a.e.}.
\end{equation*}
Suppose now $v\in V$ and $u\in \an(v)$. Through a recursion of the relation above in $\cl{A}_u(v)$ that treats $u$ as a root node without further tracing its ancestor, one has 
\[
Y_v\ge c_{uv} Y_u +   \sum_{w\in \An_u^{\circ}(v)} b_{w,v}^u  \eta_w  \quad \text{$\mu$-a.e.}
\]
for some  constant $c_{uv}>0$ and $b_{w,v}^u\ge 0$. It is clear that
$
\mu(Y_v< c_{uv} Y_u)=0$.

\subsection{Estimate of angular support interval}

To make use of AAC $\tau(u,v)$ as described in Section \ref{sec:causal order} for inferring causal direction, one needs to estimate the angular support interval $[a,b]$. For such a purpose, we need to step back from the limit eSCM $\mbf{Y}$  to the distributional property of the pre-limit data $\mbf{X}$. In particular, one needs a second-order condition (with respect to the first order limit $\cl{L}(\mbf{Y})$) which, roughly speaking, describes a contrast between the radial tail within the angular support interval $[a,b]$ and the one outside  $[a,b]$. 

\begin{definition}[Second-Order Condition  $\cl{SO}(\rho)$.]\label{Def:second order}
  Let $(X_1,X_2)$ be a MRV random vector taking value in $\bb{E}_2$ satisfying \eqref{eq:pareto marginal} and \eqref{eq:Lambda}, which has an angular support interval $[a,b]\subset  [0,1]$. We say $(X_1,X_2)$ satisfies      $\cl{SO}(\rho)$, with $\rho>0$, if the following holds:     For any   Borel $B\subset [0,1]\setminus [a,b] $  whose closure $\overline{B}\cap [a,b]=\emptyset$,  we have  
  \begin{equation}\label{eq:second order}
  \Prt\pp{ W \in B  \mid R>t }=O(\Prt(R>t)^{\rho})
  \end{equation}
  as $t\rightarrow\infty$, where $(W,R):=(X_1/(X_1+X_2), X_1+X_2)$.
\end{definition}
By monotonicity of the conditional probability in \eqref{eq:second order},  it suffices to consider $B$ of the form $B=[0, a-\epsilon )\cup (b+\epsilon ,1]$, $\epsilon>0$, where an interval $[s,t)$ or $(s,t]$ is understood as empty if $s>t$.  Here, the constant hidden behind the $O(\cdot)$ notation may depend on $B$ chosen. 

The condition $\cl{SO}(\rho)$ can be related to the hidden regular variation condition  on the cone $ [0,\infty)^2\setminus \bb{C}_{a,b} $, where  $\bb{C}_{a,b}:=\{ (x_1,x_2)\in  [0,\infty)  : \  a(x_1+x_2) \le x_1 \le b (x_1+x_2)   \}$ is the forbidden zone  \cite{resnick2024art}.  
 Recall under MRV of $(X_1,X_2)$ on $\bb{E}_2$ as described in Definition \ref{Def:second order}, we have the vague convergence $
 \Prt\left(   W  \in  \cdot   \mid  R>t  \right) \overset{v}{\rightarrow}  \Lambda_{\{1,2\}}( (y_1,y_2)\in \cdot  \mid  y_1+y_2>1 )
$ as $t\rightarrow\infty$, where $\Lambda_{\{1,2\}}$ is the exponent measure of $(X_1,X_2)$.
On the other hand, the condition $\cl{SO}(\rho)$ can be related to the hidden regular variation condition  on the cone outside the angle range $[a,b]$; see  e.g., \cite{resnick2024art} for more details. In particular, consider the case where the law $(X_1,X_2)$ is MRV on $\bb{E}_2\setminus \bb{C}_{a,b} $ in the sense of the following:  There exists a measure $\Lambda_0$ on the Borel $\sigma$-field of $[0,\infty)^2 \setminus \bb{C}_{a,b} $ that is finite on any Borel subset of $[0,\infty)^2\setminus \bb{C}_{a,b}$  and separated from  $\bb{C}_{a,b}$,  such that  $\lim_{t\rightarrow\infty}t\Prt(  (X_1,X_2) \in d_0(t) A)=\Lambda_{0}(A)$ for  any Borel $A\subset [0,\infty)^2\setminus \bb{C}_{a,b}$  with  $\Lambda_0(\partial A)=0$, and the measurable function  $d_0: (0,\infty)\mapsto(0,\infty)$  is regularly varying with index $1/[(1+\wt{\rho})\alpha]$, $\wt{\rho}>0$, as $t\rightarrow\infty$.  Note that $\lim_{t\rightarrow\infty} t^{1/\alpha} /d_0(t) =\infty$, where $t^{1/\alpha}$ corresponds to the normalization in the MRV condition \eqref{eq:Lambda} on the full space $\bb{E}_2$.
Then the $\cl{SO}(\rho)$ condition is satisfied with any $\rho\in (0,\wt{\rho})$ in view of the Potter's bound (e.g., \cite[Theorem 1.5.6]{bingham:1989:regular}), or one may take $\rho=\wt{\rho}$ if $d_0(t)\sim c t^{1/[\alpha(1+\wt{\rho})]}$ readily for some constant $c>0$. On the other hand, the $\cl{SO}(\rho)$ condition also covers the situations beyond hidden regular variation such as  $\Prt((X_1,X_2)\notin \bb{C}_{a,b})=0$, for which one may take a $\rho>0$ arbitrarily large.

Now we formulate an estimator of the angular support interval $[a,b]$, which covers the one employed in Section \ref{sec:causal order} as a special case.  Let $\Delta=\{(s,t)\in [0,1]^2,\  s\le t\}$.  Consider a measurable function $d: [0,1]\times \Delta \mapsto [0,1] $ which serves as a   distance from the point $w\in [0,1]$ to the interval $[s,t]$, $0\le s\le t\le 1$. We assume that $d(w,s,t)$ is continuous in $w\in [0,1]$ for each $(s,t)\in \Delta$ fixed,  and it is also continuous in $(s,t)\in \Delta$  for each $w\in [0,1]$ fixed. Furthermore, suppose that $d(w,s,t)>0$ if and only if  $w\notin [s,t]$, and that it satisfies the monotonicity property $d(w,s,t)\ge  d(w,s',t')$ if $s'\le s$ and $t'\ge t$.  
Consider also a  continuous function $L:[1,\infty)\mapsto (0,\infty)$ which will play the role of weighting the observations according to their  radial locations.  
Let $(X_{i,1},X_{i,2})_{i=1,\ldots,n}$ be i.i.d.\ observations of $(X_1,X_2)$ in Definition \ref{Def:second order}. Order them as random vectors $(X_{(1),1},X_{(1),2}),\ldots,(X_{(n),1},X_{(n),2})$, so that $R_{(1)}\ge  \ldots \ge  R_{(n)}$, $R_{(i)}:= X_{(i),1}+X_{(i),2}$. Set $W_{(i)}=X_{(i),1}/R_{(i)}$. Here and below, we often suppress a notation's dependence on sample size $n$ for simplicity.
Define for $1\le k\le n$ that
$$
D_k(s,t)=\frac{1}{k} \sum_{i=1}^k  d(W_{(i)},s,t) L(R_{(i)}/ R_{(k)}),
$$
and set the objective function 
\begin{equation}\label{eq:g_n form gamma}
 g_n(s,t)=   t-s   +  \lambda k^\gamma   D_k(s,t),
\end{equation}
where $\lambda\in (0,\infty)$ and $\gamma\in (0,\infty)$ are fixed parameters.  Note that $g_n(s,t)$ is a continuous function on $\Delta$.    
The asymptotic theory below is formulated for general choices of $d$, $L$, $\lambda$, $\gamma$, while empirically we found that the specific choices described in Section \ref{sec:causal order} seem to work reasonably well.

The estimator of $a$ and $b$ is formulated as follows
\begin{equation}\label{eq:a,b est main}
\pp{\wh{a}_n,\wh{b}_n
}=\argmin_{ (s,t)\in \Delta} g_n(s, t),
\end{equation}
where the operation $\argmin$ is understood as selecting a measurable representative of the minimizer if the latter is not unique. 


Now we present a  consistency result below.
We shall work with an intermediate  sequence $k=k_n\in \bb{Z}_+$ that tends to $\infty$ with $k_n=o(n)$, for which we suppress its dependence on sample size $n$ for simplicity.

\begin{theorem}\label{Thm:consistency of a,b}
Consider the setup of  Definition \ref{Def:second order}, including  the  second order condition $\cl{SO}(\rho)$, $\rho>0$,  as well as the assumptions described above for $d(w,s,t)$ and $L(r)$.  Assume in addition that for some constants $\delta\in (0,\alpha)$ and $C>0$, we have $L(r)\le C r^{\delta}$, $r\ge 1$.  Then the estimator in \eqref{eq:a,b est main} is consistent: $\wh{a}_n\ConvP a$ and $\wh{b}_n\ConvP b$ as $n\rightarrow\infty$, when $k=k_n\rightarrow\infty$ and $k=o(n^{\rho/(\gamma+\rho)})$ as $n\rightarrow\infty$, where $\gamma$ is as in \eqref{eq:g_n form gamma}.   
\end{theorem}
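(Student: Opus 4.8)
\textbf{Proof plan for Theorem~\ref{Thm:consistency of a,b}.}
The plan is to establish a two-sided argument: show that $[\wh a_n,\wh b_n]$ cannot fail to (approximately) cover $[a,b]$, and simultaneously that it cannot be (much) wider than $[a,b]$. Throughout, I would work on the event that the order statistics behave regularly, and I would first record the asymptotic behavior of the two ingredients of $D_k(s,t)$. By the MRV assumption plus standard results on intermediate order statistics (e.g. the Potter bounds and the convergence $R_{(k)}/(n/k)^{1/\alpha}\ConvP$ a positive constant), for any fixed $[s,t]$ with $[a,b]\subset[s,t]$ one has $d(W_{(i)},s,t)=0$ for all but $O_P(k\Prt(R>R_{(k)})^{\rho})$ of the top-$k$ observations by the $\cl{SO}(\rho)$ condition; combined with the moment bound $L(r)\le C r^\delta$ with $\delta<\alpha$ (which controls $\frac1k\sum_{i=1}^k L(R_{(i)}/R_{(k)})$ in $L^1$ via the regular variation of the radial tail), this gives $k^\gamma D_k(s,t)=o_P(1)$ precisely when $k=o(n^{\rho/(\gamma+\rho)})$, since $\Prt(R>R_{(k)})\asymp k/n$. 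Conversely, for any fixed $[s,t]$ with $[a,b]\not\subset[s,t]$, the set $\{w:d(w,s,t)>c\}$ has positive $S$-mass for small $c>0$ (because $S$ charges a point of $[a,b]$ outside $[s,t]$), so by the law of large numbers for the weighted top-$k$ empirical angular measure, $D_k(s,t)\ConvP \kappa(s,t)>0$ and hence $k^\gamma D_k(s,t)\to\infty$ in probability.

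Next I would turn these pointwise facts into a statement about the minimizer. For the ``covering'' direction: fix $\epsilon>0$ and let $\mathcal C_\epsilon=\{(s,t)\in\Delta: [a,b]\not\subset[s-\epsilon,t+\epsilon]\}$ (a compact set once we also bound things away, handled by a finite subcover). On a grid of such $(s,t)$, using the monotonicity property $d(w,s,t)\ge d(w,s',t')$ for $s'\le s$, $t'\ge t$, and continuity of $d$ and $L$, the convergence $D_k(s,t)\ConvP\kappa(s,t)$ can be upgraded to a uniform lower bound: $\inf_{(s,t)\in\mathcal C_\epsilon} k^\gamma D_k(s,t)\to\infty$ in probability. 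Since $g_n(s,t)\ge \lambda k^\gamma D_k(s,t)$ and since $g_n$ evaluated at any fixed $[s,t]\supset[a,b]$ is $t-s+o_P(1)\le 1+o_P(1)$, with probability tending to $1$ the minimizer avoids $\mathcal C_\epsilon$; that is, $[a-\epsilon,b+\epsilon]\supseteq$ [in the sense that] $\wh a_n\le a+\epsilon$ and $\wh b_n\ge b-\epsilon$ eventually — wait, more precisely the minimizer satisfies $[a,b]\subseteq[\wh a_n-\epsilon,\wh b_n+\epsilon]$, i.e. $\wh a_n\le a+\epsilon$ and $\wh b_n\ge b-\epsilon$ w.h.p.

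For the ``not too wide'' direction: I would compare $g_n(\wh a_n,\wh b_n)$ with $g_n(a,b)$. We have $g_n(a,b)=(b-a)+\lambda k^\gamma D_k(a,b)$, and $D_k(a,b)$ involves only angular observations landing outside $[a,b]$, so by $\cl{SO}(\rho)$ and the chosen rate $k=o(n^{\rho/(\gamma+\rho)})$, $k^\gamma D_k(a,b)=o_P(1)$, giving $g_n(a,b)=(b-a)+o_P(1)$. Since $(\wh a_n,\wh b_n)$ minimizes $g_n$, $(\wh b_n-\wh a_n)\le g_n(\wh a_n,\wh b_n)\le g_n(a,b)=(b-a)+o_P(1)$. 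Combined with the first direction, which already forces $\wh a_n\le a+\epsilon$ and $\wh b_n\ge b-\epsilon$ (hence $\wh b_n-\wh a_n\ge b-a-2\epsilon$), we deduce $\wh b_n-\wh a_n\to b-a$ and then, feeding this back, $\wh a_n\ConvP a$ and $\wh b_n\ConvP b$. One also needs to check the degenerate edge cases $a=0$ and/or $b=1$ (no mass to cover on that side), where the corresponding constraint is automatically satisfied and the argument simplifies.

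The main obstacle I anticipate is making the control of $k^\gamma D_k(s,t)$ \emph{uniform} over the relevant regions of $\Delta$ while carefully tracking the rate. The pointwise bound $k^\gamma D_k(s,t)=o_P(1)$ for $[s,t]\supset[a,b]$ rests on quantifying how many of the top-$k$ radial order statistics have angle outside $[a,b]$; this requires combining $\cl{SO}(\rho)$ (which is a statement about $\Prt(W\in B\mid R>t)$ for deterministic levels $t$) with the random level $R_{(k)}$, which I would handle by sandwiching $R_{(k)}$ between deterministic levels $c_1(n/k)^{1/\alpha}$ and $c_2(n/k)^{1/\alpha}$ with high probability and invoking monotonicity in the truncation level, together with the Potter bounds to absorb the slowly varying discrepancy — this is where the precise exponent $\rho/(\gamma+\rho)$ enters, as $k^\gamma\cdot(k/n)^\rho\to 0 \iff k = o(n^{\rho/(\gamma+\rho)})$. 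The weighting $L$ introduces an extra moment term $\frac1k\sum_{i\le k}L(R_{(i)}/R_{(k)})$ which is $O_P(1)$ by regular variation of the tail and $\delta<\alpha$ (so that $\E[L(R/t)\mid R>t]$ converges to a finite constant, namely $\int_1^\infty L(r)\,\alpha r^{-\alpha-1}dr<\infty$); this needs to be stated and justified but is routine. The uniformity over $\mathcal C_\epsilon$ for the lower-bound direction is comparatively easy given monotonicity of $d$ and compactness, reducing to a finite collection of pointwise LLN statements.
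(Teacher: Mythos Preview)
Your overall strategy matches the paper's: a two-sided comparison argument (the paper phrases it as splitting $\Delta_\epsilon$ into a ``Hit'' region, where $[s,t]$ fails to cover $[a,b]$, and a ``Miss'' region, where it covers but is too wide), relying on a law-of-large-numbers limit for $D_k(s,t)$ in the Hit case and a second-order rate bound in the Miss case. The paper also exploits monotonicity of $d(w,s,t)$ in $(s,t)$ to reduce the Hit-side uniformity to two pointwise limits, $D_k(a+\epsilon,1)$ and $D_k(0,b-\epsilon)$, exactly as you anticipate.

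There is, however, one genuine gap. In your ``not too wide'' step you compare with $g_n(a,b)$ and assert $k^\gamma D_k(a,b)=o_P(1)$ by invoking $\cl{SO}(\rho)$. But $\cl{SO}(\rho)$ is stated only for angular sets $B$ whose \emph{closure} is disjoint from $[a,b]$; the constant in the $O(\cdot)$ is allowed to depend on the gap between $B$ and $[a,b]$. Thus $\cl{SO}(\rho)$ does not control $\Prt(W\in[0,a)\cup(b,1]\mid R>t)$, and in fact $D_k(a,b)$ can decay arbitrarily slowly: e.g.\ if $W=a-R^{-\beta}$ for small $\beta>0$ then $\cl{SO}(\rho)$ holds for every $\rho>0$ (any fixed $\epsilon$-band is eventually empty), yet $D_k(a,b)\asymp (k/n)^{\beta/\alpha}$, so $k^\gamma D_k(a,b)\to\infty$ under the rate $k=o(n^{\rho/(\gamma+\rho)})$ once $\beta$ is small enough. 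The paper sidesteps this by comparing not with $g_n(a,b)$ but with $g_n(a-\epsilon/2,\,b+\epsilon/2)$: since $a-\epsilon/2<a$ and $b+\epsilon/2>b$ strictly, the second-order lemma gives $D_k(a-\epsilon/2,b+\epsilon/2)=O_P((k/n)^\rho)$ and hence $k^\gamma D_k(a-\epsilon/2,b+\epsilon/2)=o_P(1)$. Your argument is salvaged by the same one-line modification: compare with $g_n(a-\epsilon',b+\epsilon')$, obtain $\wh b_n-\wh a_n\le (b-a)+2\epsilon'+o_P(1)$, and then let $\epsilon'\downarrow 0$ after combining with the covering direction.
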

We point out that it is possible to relax the assumption $L(r)\le C r^{\delta}$, with $\delta<\alpha$, to allow, e.g., $L(r)=r^{\delta}$ with $\delta>\alpha$. This requires a more involved analysis which  we do not pursue here. 

The proof of Theorem \ref{Thm:consistency of a,b} follows a similar strategy as the proof of \cite[Theorem 5]{WangResnick2024}. We first prepare a lemma about the $D_k(s,t)$ term in the objective function $g_n(s,t)$.  
\begin{lemma}\label{Lem:D behavior}
    Under the assumptions of  Theorem \ref{Thm:consistency of a,b}, except that here $k$ is only required to satisfy $k\rightarrow\infty$ and $k=o(n)$,  we have the following asymptotic behaviors of $D_k(s,t)$.
    For general $0\le s\le t\le 1$, we have 
    \begin{equation}\label{eq:D_k(s,t) limit}
    D_k(s,t)\ConvP  \int_{[0,1]} d(w,s,t) S(dw) \int_1^\infty  L(r) \nu_\alpha(dr) 
    \end{equation} 
    as $n\rightarrow\infty$, where $S$ is the angular measure and $\nu_\alpha$ is the radial measure as in \eqref{eq:polar 2d}.
     If, in addition, $s<a$ and $t>b$, then  
     \begin{equation}\label{eq:D_k(s,t) 2nd limit}
     D_k(s,t)=O_p\pp{ \pp{k/n}^{\rho}}
     \end{equation}
     as $n\rightarrow\infty$.
\end{lemma}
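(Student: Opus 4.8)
\textbf{Proof plan for Lemma \ref{Lem:D behavior}.}

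The plan is to treat the two limit relations separately, both built on the fundamental weak convergence of the empirical tail point process $\sum_{i=1}^n \delta_{(W_i, R_i/R_{(k)})}$ restricted to radii $\ge 1$, toward a Poisson point process with mean measure $S(dw)\,\nu_\alpha(dr)$, which is the classical consequence of MRV on $\bb{E}_2$ (see \cite[Theorem 6.2]{resnick2007heavy} or \cite[Chapter 5]{resnick2007heavy}). To make this precise with the random normalization $R_{(k)}$, I would first recall that $R_{(k)}/u(n/k) \ConvP 1$, where $u$ is the quantile function of $R$, and that $\frac{1}{k}\sum_{i=1}^k \delta_{(W_{(i)}, R_{(i)}/R_{(k)})}$ converges vaguely in probability on $[0,1]\times[1,\infty)$ to the measure $S\otimes\nu_\alpha$ restricted to radius $\ge 1$. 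The function $(w,r)\mapsto d(w,s,t)L(r)$ is continuous and, crucially, grows at most like $r^\delta$ with $\delta<\alpha$, so it is integrable against $\nu_\alpha(dr)= c_0\alpha r^{-\alpha-1}dr$ on $[1,\infty)$; a standard truncation argument (continuity plus the moment bound controlling the contribution from large radii) then upgrades the vague convergence to convergence of the integral $\frac1k\sum_{i=1}^k d(W_{(i)},s,t)L(R_{(i)}/R_{(k)})$. This gives \eqref{eq:D_k(s,t) limit}.

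For the second assertion \eqref{eq:D_k(s,t) 2nd limit}, the point is that when $s<a$ and $t>b$, the function $w\mapsto d(w,s,t)$ vanishes on a neighborhood of the angular support $[a,b]$; since $d(w,s,t)\le 1$, we have $d(W_{(i)},s,t)\le \mathbf{1}_{\{W_{(i)}\notin[s,t]\}}$, so it suffices to bound $\frac1k\sum_{i=1}^k \mathbf{1}_{\{W_{(i)}\notin[s,t]\}}L(R_{(i)}/R_{(k)})$. Here I would invoke the second-order condition $\cl{SO}(\rho)$ applied with the Borel set $B=[0,s]\cup[t,1]$ (whose closure is separated from $[a,b]$): it says $\Prt(W\in B\mid R>x)=O(\Prt(R>x)^\rho)$, and since $\Prt(R>u(n/k))\asymp k/n$, the expected number of exceedances $W_{(i)}\notin[s,t]$ among the top $k$ is of order $k\cdot (k/n)^\rho$. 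I would then split the weighted sum at radius level $M$ (a large constant) and at a growing level; the bulk of the weight $L$ is controlled by the $r^\delta$ bound, and a computation using the regular variation of $R$ together with the $\cl{SO}(\rho)$ tail bound yields that the expectation of $\frac1k\sum_{i=1}^k \mathbf{1}_{\{W_{(i)}\notin[s,t]\}}L(R_{(i)}/R_{(k)})$ is $O((k/n)^\rho)$; Markov's inequality then delivers \eqref{eq:D_k(s,t) 2nd limit}. Handling the random denominator $R_{(k)}$ inside $L$ can be done by conditioning on $R_{(k)}$ (or on the order statistic) and using the fact that $R_{(k)}/u(n/k)$ is bounded away from $0$ and $\infty$ with high probability, so $L(R_{(i)}/R_{(k)})\le C (R_{(i)}/u(n/k))^\delta$ up to a multiplicative constant on that event.

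I expect the main obstacle to be the careful bookkeeping in the second part: one must simultaneously control (i) the random normalization $R_{(k)}$ appearing inside the unbounded weight $L$, (ii) the interaction between the event $\{W_{(i)}\notin[s,t]\}$ and large radii $R_{(i)}$, and (iii) the fact that $\cl{SO}(\rho)$ is a statement about the \emph{joint} tail of $(W,R)$ rather than about $W$ alone, so the bound $\Prt(W\in B, R>x)=O(\Prt(R>x)^{1+\rho})$ must be integrated against the weighting $L(R/u(n/k))$ over the region $R>u(n/k)$. A clean way to organize this is to write, for a single term, $\E[\mathbf{1}_{\{W\in B, R>u(n/k)\}} (R/u(n/k))^\delta]$ and use Fubini to express it as an integral of $\Prt(W\in B, R>x)$ against $x^{\delta-1}dx$ on $[u(n/k),\infty)$, then apply $\cl{SO}(\rho)$ together with Karamata's theorem (valid since $\delta<\alpha$ and the extra $\rho$ improves the tail exponent) to get the order $(k/n)^{1+\rho}$ per term, hence $(k/n)^\rho$ after multiplying by $n$ and dividing by $k$. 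The continuity assumptions on $d$ and $L$, and the monotonicity of $d$ in $(s,t)$, are used only lightly here but are what make the first-part truncation argument and the extended continuous mapping step go through.
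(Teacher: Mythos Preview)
Your proposal is correct and follows essentially the same route as the paper's proof: vague convergence of the normalized empirical measure $\tfrac{1}{k}\sum_i \delta_{(W_{(i)},R_{(i)}/R_{(k)})}$ to $S\otimes\nu_\alpha$ combined with a truncation-at-level-$M$ argument for the first claim, and for the second claim the indicator bound $d(w,s,t)\le \mathbf{1}_{\{w\notin[s,t]\}}$, replacement of $R_{(k)}$ by a deterministic $u(n/k)/2$ on a high-probability event, and the Fubini computation expressing the $\delta$-th power as $\int_0^R r^{\delta-1}\,dr$ to integrate the $\cl{SO}(\rho)$ tail bound. One small slip: in your opening sentence the limit of the (unnormalized) tail point process is not a Poisson point process here---after dividing by $k$ the limit is the \emph{deterministic} measure $S\otimes\nu_\alpha$, which you state correctly two lines later; just drop the Poisson reference.
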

\begin{proof}[Proof of Lemma \ref{Lem:D behavior}]
Suppose $d(t)>0$ satisfies $\lim_{t\rightarrow\infty} t\Prt\pp{ R>d(t)}=1$; in fact $d(t)\sim t^{1/\alpha} \Lambda_{\{1,2\}}\left(y_1+y_2\ge 1\right)^{1/\alpha}$ as $t\rightarrow\infty$ under the assumption.
First, recall a
well-known approximation 
\begin{equation}\label{eq:order lln}
 \frac{R_{(k)}}{d(n/k)}\ConvP 1   
\end{equation}
as $n\rightarrow\infty$; see, e.g., \cite[Eq.\ (4.17)]{resnick2007heavy}. Leveraging \eqref{eq:order lln},
it follows from an argument similar to that for \cite[Eq.\ (9.37)]{resnick2007heavy}
that 
\begin{equation}\label{eq:lln rm}
 \frac{1}{k}\sum_{i=1}^n \delta_{\pp{W_{(i)}, R_{(i)}/R_{(k)}}} \ConvD  S\times \nu_{\alpha}, 
\end{equation}
where $\ConvD$ is understood as weak convergence of random measures on $[0,1]\times (0,\infty)$ under the vague topology (here, subsets of $(0,\infty)$ separated from the origin is considered bounded); see, e.g., \cite[Chapter 9]{kulik2020heavy}). Assume for now that $L$ is bounded. Note also that $\nu_\alpha$ is atomless. So one can apply \cite[Lemma 23.17]{kallenberg:2021:foundations} by integrating the function $d(w,s,t)L(r)\mbf{1}_{\{r\ge 1\}}$, whose discontinuity set is of zero $S\times \nu_{\alpha}$-measure, with respect to the left-hand side measure in \eqref{eq:lln rm} to reach the first conclusion. If $L$ is unbounded,  introduce the truncation $L(r)=L(r) \mbf{1}_{\{r\le M\}} +  L(r)\mbf{1}_{\{r> M\}}$, $M>0$. The desirable conclusion is obtained by the same argument  applied to the first term with letting $n\rightarrow\infty$ first, and then  $M\rightarrow\infty$, given that one can show 
\begin{equation}\label{eq:remainder tri approx}
\lim_{M\rightarrow\infty}\limsup_{n\rightarrow\infty} \Prt  \pp{\frac{1}{k} \sum_{i=1}^n (R_{(i)}/ R_{(k)})^{\delta} \mbf{1}_{\{ R_{(i)}/ R_{(k)} >M\} }>\epsilon}=0
\end{equation}
for any $\epsilon>0$, where we have   applied the assumption $L(r)\le C r^{\delta}$, $\delta\in (0,\alpha)$, and the fact that $d(w,s,t)\le 1$. To do so,  first by \eqref{eq:order lln}, on an event $\Omega_n$ whose probability tends to $1$ as $n\rightarrow\infty$,
one has $R_{(k)}\ge d(n/k)/2$, and thus by monotonicity we have
\begin{equation}\label{eq:D_k^*}
\frac{1}{k} \sum_{i=1}^n (R_{(i)}/ R_{(k)})^{\delta} \mbf{1}_{\{ R_{(i)}/ R_{(k)} >M\} }\le\frac{1}{k} \sum_{i=1}^n \pp{\frac{ R_{i}}{ d(n/k)/2}}^{\delta} \mbf{1}_{\{ R_{i}/ (d(n/k)/2) >M\} }=:D_k^*. 
\end{equation}
on $\Omega_n$.  Let $$\pp{R,W}\EqD \pp{R_i=(X_{i,1}+X_{i,2}),W_i=X_{i,1}/(X_{i,1}+X_{i,2})}.$$
Applying \cite[Proposition 1.4.6]{kulik2020heavy}, one has
\begin{align*}
\E D_k^* = & 2^{\delta} \frac{n}{k} d(n/k)^{-\delta} \E\pb{R_1^\delta  \mbf{1}_{\{ R  >M d(n/k)/2\}} } 
\\\le & C \frac{n}{k} d(n/k)^{-\delta}  (M d(n/k)/2)^{\delta} \Prt\pp{ R  >M d(n/k)/2} \le C M^{\delta-\alpha},  
\end{align*}
where we have used the fact that $(n/k)\Prt(R> M d(n/k)/2 )\le C (M/2)^{-\alpha}$, and the value of the constant $C>0$ may change from one expression to another, although it does not depend on $n$ or $M$.   Therefore, we have $\lim_M\limsup_{n}\E D_k^* = 0$, which together with $\lim_{n} \Prt\pp{\Omega_n}=1$ implies \eqref{eq:remainder tri approx}.
We have thus finished the proof of the first claim. 

For the second claim,  first based on the $\cl{SO}(\rho)$ condition,  we infer that  
\begin{equation}\label{eq:SO imp}
  \Prt\pp{ R>r, W\in [s,t]^c }\le C r^{-(1+\rho)\alpha},  \quad r>0,  
\end{equation}
where the constant $C>0$   does not depend on $r$. Next, using a similar argument as that around \eqref{eq:D_k^*} as well as the fact that $d(w,s,t)\le \mbf{1}_{\pc{w\in [s,t]^c}}$, it suffices to show
\begin{equation}\label{eq:D_k^*(s,t)}
  D_k^*(s,t):= \frac{1}{k} \sum_{i=1}^n \pp{\frac{ R_{i}}{ d(n/k)/2}}^{\delta} \mbf{1}_{\{ R_{i}  >d(n/k)/2,\  W_i\in [s,t]^c\} } =O_p\pp{ \pp{\frac{k}{n}}^{\rho}}.
\end{equation}
Indeed, by Fubini,  \eqref{eq:SO imp} and $\delta\in (0,\alpha)$, one has
\begin{align*}
\E D_k^*(s,t) &\le \frac{C n}{k d(n/k)^{\delta}} \E\pb{  \int_{0}^R r^{\delta -1} dr  \mbf{1}_{\{ R >  d(n/k)/2 ,\  W\in [s,t]^c\} }   }\\
&= \frac{C n}{k d(n/k)^{\delta}} \int_0^\infty r^{\delta -1} dr \Prt\pp{     { R > r \vee \pp{  d(n/k)/2}  ,\  W\in [s,t]^c }   }\\
&\le \frac{C n}{k d(n/k)^{\delta}} \pp{\int_0^{d(n/k)/2} r^{\delta -1}  d(n/k)^{-(1+\rho)\alpha}   dr   + \int_{d(n/k)/2}^\infty r^{\delta -1-(1+\rho)\alpha} dr}\\
&\le   \frac{C n}{k d(n/k)^{\delta}}   \cdot  d(n/k)^{\delta-(1+\rho)\alpha}\le  C \pp{ \frac{k}{n}}^{\rho},
\end{align*}
where in the last step we have used $d(n/k) \sim C (n/k)^{1/\alpha}$ as $n\rightarrow\infty$. Therefore, the relation \eqref{eq:D_k^*(s,t)} follows, and so does the second claim.  
\end{proof}

\begin{proof}[Proof of Theorem \ref{Thm:consistency of a,b}]
Note that under the assumption of the exponent measure $\Lambda_{\{1,2\}}$ of $(X_1,X_2)$ having non-vanishing marginals,  necessarily  $a<1$ and $b>0$, while it is possible for $a=0$ or $b=1$.

First, we claim that it suffices to show for any $\epsilon>0$,  
\begin{equation}\label{eq:cons goal}
 \lim_{n\rightarrow\infty} \Prt\pp{ \inf_{(s,t) \in  \Delta_\epsilon } g_n(s,t) >  \inf_{(s,t)\in\Delta_\epsilon^c } g_n(a,b)+\epsilon/2 } =1, 
\end{equation}
where $$\Delta_\epsilon=\{(s,t)\in \Delta: \ |s-a|> \epsilon \text{ or } |t-b|>\epsilon \},$$
and $\Delta_\epsilon^c$ is its complement in $\Delta=\{(s,t):\ 0\le s\le t\le 1\}$.
Indeed, this is because the event inside the probability sign in \eqref{eq:cons goal}  is a subset of the event $\{|\wh{a}_n-a|\le \epsilon\} \cap \{|\wh{b}_n-b|\le  \epsilon\}$. Throughout, we shall assume   $\epsilon>0$ is sufficiently small, so that $\Delta_\epsilon\neq \emptyset$ and the quantities below such as $a-\epsilon/2$ and $b+ \epsilon/2$  are within $[0,1]$ when $0<a\le b<1$.
                               
Next, we further break $\Delta_\epsilon$ into two parts: $\Delta_\epsilon=\Delta_\epsilon^{\text{Hit}}\cup \Delta_\epsilon^{\text{Miss}}$, where
\[
\Delta_\epsilon^{\text{Hit}}=\{(s,t)\in \Delta_\epsilon:\ [s,t]^c \cap [a,b] \neq \emptyset \}, \quad  \Delta_\epsilon^{\text{Miss}}=\{(s,t)\in \Delta_\epsilon:\ [s,t]^c \cap [a,b] = \emptyset  \}.
\]
Note that $\Delta_\epsilon^{\text{Hit}}\neq \emptyset$  is possible only when $a<b$, and $\Delta_\epsilon^{\text{Miss}}\neq \emptyset$  is possible only when $a>0$ and $b<1$.  
To show \eqref{eq:cons goal}, it suffices to show 
\begin{equation}\label{eq:hit}
    \lim_{n\rightarrow\infty} \Prt\pp{ \inf_{(s,t) \in  \Delta_\epsilon^{\text{Hit}} } g_n(s,t) > g_n(a,b)+\epsilon/2 } =1
\end{equation}
and
\begin{equation}\label{eq:miss}
    \lim_{n\rightarrow\infty} \Prt\pp{ \inf_{(s,t) \in  \Delta_\epsilon^{\text{Miss}} } g_n(s,t) > g_n(a-\epsilon/2,b+\epsilon/2)+\epsilon/2 } =1.
\end{equation}


Next,  in view of  the fact that $d(w,a,b)=0$ when $w$  is in the angular support interval $[a,b]$ of $S$, we infer that $ \int_{[0,1]} d(w,a,b)S(dw)=0$, and thus 
\begin{equation}\label{eq:D_k(a,b) vanish}
 D_k(a,b)\ConvP 0   
\end{equation}
as $n\rightarrow\infty$ by \eqref{eq:D_k(s,t) limit}. 



When $(s,t)\in \Delta_\epsilon^{\text{Hit}} $,   the set $[s,t]^c  $ contains  either the interval   $[0, a+\epsilon]$, or the interval $[b-\epsilon,1]$, each having a positive $S$ measure.   
  By \eqref{eq:D_k(s,t) limit},  we have  as $n\rightarrow\infty$
\begin{equation*}
D_k( a+\epsilon,1 )\ConvP A_\epsilon>0, \quad D_k(0, b-\epsilon )\ConvP B_\epsilon>0, 
\end{equation*}
where $A_\epsilon=\int_{[0,1]} d(w, a+\epsilon,1 ) S(dw) \int_1^\infty  L(r)dr$, and $B_\epsilon=\int_{[0,1]} d(w,0, b-\epsilon) S(dw) \int_1^\infty  L(r) dr$. Based on the monotonicity assumption $D_k(w,s,t)\ge  D_k(w,s',t')$ if $s'\le s$ and $t'\ge t$,  as well as the preceding limit relation and the relation \eqref{eq:D_k(a,b) vanish},   we have
\begin{align}\label{eq:hit diff}
 g_n(s,t)-g_n(a,b)&= (t-s)-(b-a) + \lambda k^\gamma \pb{ D_k(s,t)-D_k(a,b)}\notag\\    
 &\ge -1  + \lambda k^\gamma  \left[D_k( a+\epsilon,1 ) \wedge  D_k( 0, b-\epsilon ) -D_k(a,b) \right] \ConvP \infty 
\end{align}
as $n\rightarrow\infty$. So  \eqref{eq:hit} follows.

When $a>0$ and $b<1$ and $(s,t)\in \Delta_\epsilon^{\text{Miss}}$, we have $s\le a-\epsilon$, and $t\ge b+\epsilon$. Then 
\begin{align}\label{eq:miss diff}
 g_n(s,t)-g_n(a-\epsilon/2,b+\epsilon/2)   
 &\ge   \epsilon   -     \lambda k^\gamma{  D_k(a-\epsilon/2,b+\epsilon/2)}\ConvP \epsilon   
\end{align}
as $n\rightarrow\infty$,
where we have used \eqref{eq:D_k(s,t) 2nd limit} and  the assumption $k^\gamma  (k/n)^\rho \rightarrow 0$ as $n\rightarrow\infty$. So \eqref{eq:miss} is concluded by noticing that the last $\epsilon/2$ term inside the probability sign in  \eqref{eq:miss}  is smaller than $\epsilon$ in \eqref{eq:miss diff}. The whole proof is then finished.


\end{proof}


\subsection{Proof of Proposition \ref{Pro:EASE empirical}}


We  state a result  that adapts  \cite[Proposition 2]{gnecco2021causal}, from which Proposition \ref{Pro:EASE empirical} follows directly.

\begin{lemma}
Let $\cl{G}=(E,V)$ be a DAG with $ V=\{1,\ldots,d\}$ and let $\pp{\tau(u,v)}_{u,v\in V, u\neq v}$ be real coefficients satisfying 
   $u\in \an(v)$ if and only if $\tau(u,v)>0$.
Suppose $\pp{\wh{\tau}(u,v)}_{u,v\in V, u\neq v}$ are estimators of $\pp{\tau(u,v)}_{u,v\in V, u\neq v}$. Let $\wh{\pi}:V\mapsto V$ be a causal order returned by the EASE algorithm in Algorithm \ref{Alg:EASE} when  $\pp{\wh{\tau}(u,v)}_{u,v\in V, u\neq v}$ is supplied as the input. Let $\Pi=\{\pi\}$ be the collection of correct causal orders associated with $\cl{G}$. Then
\begin{align*}
 \Prt\pp{\wh{\pi}\notin \Pi} \le d^2  \bigvee_{(u,v)\in V^2,\ u\neq v} \Prt\pp{ |\wh{\tau}(u,v)-\tau(u,v)|  >  m_\tau  /2},  
\end{align*} 
where $m_\tau=\min\{\tau(u,v):\ u\in \an(v) \}$.
\end{lemma}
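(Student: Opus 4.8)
The plan is to prove the stated lemma, since Proposition~\ref{Pro:EASE empirical} follows from it by observing that if $\wh\tau(u,v)$ are consistent estimators of $\tau(u,v)$, then for the fixed threshold $m_\tau/2>0$ each probability $\Prt(|\wh\tau(u,v)-\tau(u,v)|>m_\tau/2)\to 0$, and the finite union bound of $d^2$ such terms also tends to $0$.

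\textbf{Strategy.} The key is the following deterministic (sample-path) implication: on the event
\[
\Omega_0:=\Bigl\{\,\max_{(u,v)\in V^2,\ u\neq v}\,|\wh\tau(u,v)-\tau(u,v)|\le m_\tau/2\,\Bigr\},
\]
the EASE algorithm fed with $(\wh\tau(u,v))$ returns a correct causal order. Granting this, $\Prt(\wh\pi\notin\Pi)\le \Prt(\Omega_0^c)$, and then a union bound over the $d^2$ (more precisely $d(d-1)\le d^2$) ordered pairs gives exactly the displayed bound. So the whole proof reduces to establishing the deterministic implication.

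\textbf{Key steps for the deterministic implication.} I would argue by induction on the iteration index $s$ of Algorithm~\ref{Alg:EASE}, showing that at each step $s$ the chosen node $v_s$ is a source of the sub-DAG induced by the remaining node set $V_s$ (equivalently, $v_s$ has no ancestor within $V_s$), which is exactly what is needed for $\pi$ to be a valid topological order. Fix $s$ and suppose $V_s$ still contains at least one source $v^\star$ of the induced sub-DAG. For that source, $u\notin\an(v^\star)$ for every $u\in V_s\setminus\{v^\star\}$, hence $\tau(u,v^\star)\le 0$ for all such $u$ by the hypothesis ``$u\in\an(v)\iff\tau(u,v)>0$'' (note the ``only if'' direction gives $\tau(u,v^\star)\le 0$ when $u\notin\an(v^\star)$); therefore $M_{v^\star}^{(s)}=\max_{u\in V_s\setminus\{v^\star\}}\tau(u,v^\star)\le 0$, and on $\Omega_0$ the estimated quantity satisfies $\wh M_{v^\star}^{(s)}\le m_\tau/2$. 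On the other hand, take any non-source $v\in V_s$: it has some ancestor $u_0\in V_s$ (a source of the induced sub-DAG reachable upward from $v$ lies in $V_s$ since $V_s$ is ``ancestrally closed'' along the iterations — this closure is itself part of the induction hypothesis and needs to be recorded), so $\tau(u_0,v)\ge m_\tau>0$, giving $M_v^{(s)}\ge m_\tau$ and hence on $\Omega_0$, $\wh M_v^{(s)}\ge m_\tau - m_\tau/2 = m_\tau/2$. Comparing, every source has $\wh M^{(s)}\le m_\tau/2$ while the bound for non-sources is $\ge m_\tau/2$; one must be slightly careful that the inequalities are not both tight — but a source with $M^{(s)}\le 0$ yields $\wh M^{(s)}\le m_\tau/2$ with the gap being the strict inequality $0<m_\tau$, so the $\argmin$ over $V_s$ of $\wh M_v^{(s)}$ cannot strictly exceed $m_\tau/2$, whereas any selected non-source would force a value $\ge m_\tau/2$ that can only be attained in a degenerate boundary case; tightening this (e.g.\ by using $\le m_\tau/2$ vs.\ $>m_\tau/2$ from a sharper bound, or by noting the minimizer realizes the source's value) shows $v_s$ must be a source. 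Setting $V_{s+1}=V_s\setminus\{v_s\}$ with $v_s$ a source keeps the remaining set ancestrally closed with respect to $\cl G$ restricted appropriately, closing the induction.

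\textbf{Main obstacle.} The delicate point is the strictness of the comparison between $\wh M^{(s)}$ for sources versus non-sources on the boundary case, i.e.\ making sure the $m_\tau/2$ threshold separates the two groups rather than merely weakly bounding them; this is handled exactly as in \cite[Proposition~2]{gnecco2021causal} by exploiting that for a true source $M^{(s)}_{v^\star}\le 0$, so on $\Omega_0$ its estimate is $\le m_\tau/2$, while for any node that is \emph{not} a source the estimate is $> m_\tau - m_\tau/2 = m_\tau/2$ strictly (using $\tau(u_0,v)\ge m_\tau$ strictly and the one-sided error bound), so the algorithm's $\argmin$ can never land on a non-source. The remaining bookkeeping — that $V_s$ stays ancestrally closed so that each non-source in $V_s$ genuinely retains an ancestor in $V_s$ — is routine and follows from removing only sources at each step.
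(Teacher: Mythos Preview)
Your approach is exactly the one the paper invokes by deferring to \cite[Proposition~2]{gnecco2021causal}: a union bound reduces the claim to the deterministic implication that on $\Omega_0$ each EASE step selects a source of the remaining sub-DAG, which you prove by the source/non-source comparison of $\wh M_v^{(s)}$ via induction. The strictness worry you flag is handled simply by taking $\Omega_0$ with the strict inequality $|\wh\tau-\tau|<m_\tau/2$, which yields the clean separation $\wh M^{(s)}_{\text{source}}<m_\tau/2<\wh M^{(s)}_{\text{non-source}}$ at the harmless cost of replacing ``$>$'' by ``$\ge$'' in the final probability bound---immaterial for the consistency conclusion in Proposition~\ref{Pro:EASE empirical}; your attempted fix via ``$\tau(u_0,v)\ge m_\tau$ strictly'' does not work since $m_\tau$ is defined as the minimum.
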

\begin{proof}
 The proof follows exactly that of \cite[Proposition 2]{gnecco2021causal} in the supplementary material of that paper, once at the first displayed formula below (S.21),   the role of ``$1$'' there is replaced  by $m_\tau$, and the role of ``$\eta$'' there is replaced  by $0$.  
\end{proof}

\end{document}